\newcounter{master}
\numberwithin{master}{section}
\theoremstyle{plain}
\newtheorem{theorem}[master]{Theorem}
\newtheorem{problem}[master]{Problem}
\newtheorem{proposition}[master]{Proposition}
\newtheorem{lemma}[master]{Lemma}
\newtheorem{corollary}[master]{Corollary}
\newtheorem{conjecture}[master]{Conjecture}
\newtheorem{question}[master]{Question}
\newtheorem{claim}[master]{Claim}
\newtheorem{intro}{Theorem}
\newtheorem*{custom}{\customthname}
\theoremstyle{definition}
\newtheorem{definition}[master]{Definition}
\newtheorem*{notation}{Notation}
\theoremstyle{remark}
\newtheorem*{remark}{Remark}
\pretocmd{\endremark}{\hfill$\blacktriangleleft$}{}{}
\let\c@equation\c@master
\let\c@figure\c@master
\let\c@table\c@master
\let\oldcustom\custom
\def\custom#1{\def\customthname{#1}\oldcustom}
\def\@cite#1#2{\textup{[\textbf{#1}\if@tempswa , #2\fi]}}
\def\@biblabel#1{[\textbf{#1}]}
\DeclareMathOperator{\Const}{const.}
\DeclareMathOperator{\Diag}{diag}
\DeclareMathOperator{\End}{End}
\DeclareMathOperator{\Gr}{Gr}
\DeclareMathOperator{\Graff}{Graf{}f}
\DeclareMathOperator{\Int}{int}
\DeclareMathOperator{\Stab}{Stab}
\DeclareMathOperator{\Vol}{vol}
\let\O\undefined
\DeclareMathOperator{\B}{B}
\DeclareMathOperator{\D}{D}
\DeclareMathOperator{\GA}{GA}
\DeclareMathOperator{\GL}{GL}
\DeclareMathOperator{\O}{O}
\DeclareMathOperator{\SO}{SO}
\def\tr#1{{\reset@font[}#1{\reset@font]}}\makeatother
\def\backmatter{\def\subsection##1{\def\thesubsubsection{\S\ref{##1}}\subsubsection{\nameref{##1}}}}
\def\sref{Section~\S\ref}
\def\fref{fig.~\ref}
\begin{document}

\title{On Bezdek's conjecture for high-dimensional convex bodies with an aligned center of symmetry}

\author[M.A. Alfonseca]{M. Angeles Alfonseca}
\address[M.A. Alfonseca]{North Dakota State University, Fargo, ND, USA}
\email{maria.alfonseca@ndsu.edu}
\author[B. Zawalski]{Bartłomiej Zawalski}
\address[B. Zawalski]{Kent State University, Kent, OH, USA}
\email{bzawalsk@kent.edu}
\thanks{The first author is supported in part by the Simons Foundation gift MPS-TSM-00711907. The second author is supported in part by U.S. National Science Foundation Grants DMS-1900008 and DMS-2247771.}
\subjclass[2010]{Primary 52A20; Secondary 22E45, 52A10;}
\keywords{convex bodies, symmetric sections, Bezdek's conjecture, bodies of revolution, ellipsoids}

\begin{abstract}
In 1999, K.~Bezdek posed a~conjecture stating that among all convex bodies in $\mathbb R^3$, ellipsoids and bodies of revolution are characterized by the fact that all their planar sections have an axis of reflection. We prove Bezdek's conjecture in arbitrary dimension $n\geq 3$, assuming only that sections passing through a~fixed point have an axis of reflection, provided that the complementary invariant subspaces are all parallel to a~fixed hyperplane. The result is proven in both orthogonal and affine settings.
\end{abstract}

\maketitle

\tableofcontents

\section{Introduction}

In 1889, H.~Brunn \cite{brunn1889curven} proved that among all convex bodies, ellipsoids are characterized by the fact that all their flat sections are centrally symmetric. Brunn's result was improved in 1971 by P.W.~Aitchinson, C.M.~Petty, and C.A.~Rogers \cite{Aitchison_Petty_Rogers_1971}, and in 1974 by D.G.~Larman \cite{Larman}, who proved the False Center Theorem:

\begin{custom}{False Center Theorem}[cf. \thref{thm:11}]
Let $K\subset\mathbb R^n$, $n\geq 3$, be a~convex body. If all sections of $K$ by hyperplanes passing through a~fixed point which is not the center of symmetry of $K$ are centrally symmetric, then $K$ is an ellipsoid.
\end{custom}

\noindent In 2007, L.~Montejano and E.~Morales-Amaya \cite{S002557930000019X} further relaxed the assumption that all hyperplanes pass through a~fixed point, proving the Shaken False Center Theorem:

\begin{custom}{Shaken False Center Theorem}[cf. \thref{thm:04}]
Let $K\subset\mathbb R^3$ be an origin-symmetric convex body and let $\delta:\mathbb S^2\to\mathbb R$ be an even continuous map which is $C^1$ in a~neighbourhood of $\delta^{-1}(\{0\})$. Denote $H_{\boldsymbol\xi}^\delta\colonequals\langle\boldsymbol\xi\rangle^\perp+\delta(\boldsymbol\xi)\boldsymbol\xi$, $\boldsymbol\xi\in\mathbb S^2$. If for every $\boldsymbol\xi\in\mathbb S^2$, either $\delta(\boldsymbol\xi)=0$ and $K\cap H_{\boldsymbol\xi}^\delta$ is an ellipse or $\delta(\boldsymbol\xi)\neq 0$ and $K\cap H_{\boldsymbol\xi}^\delta$ is centrally symmetric, then $K$ is an ellipsoid.
\end{custom}

In addition to being centrally symmetric, all hyperplane sections of ellipsoids also possess an axis of symmetry, as do the hyperplane sections of general bodies of revolution. In 1999, K.~Bezdek \cite{Odor1999} posed the following conjecture:

\begin{custom}{Bezdek's Conjecture}[cf. \thref{con:02}]
Let $K\subset\mathbb R^3$ be a~convex body. If all sections of $K$ by affine planes admit an axis of symmetry, then $K$ is either a~body of revolution or an ellipsoid.
\end{custom}

In this paper, we formulate and prove several higher-dimensional analogs of Bezdek's conjecture and its affine counterpart. We say that a~convex body admits an affine symmetry if it is the image under an affine transformation of a~convex body that admits the symmetry in the usual sense. Our main results are as follows:

\begin{intro}[cf. \thref{thm:14}]\thlabel{intro:01}
Let $K\subset\mathbb R^n$, $n\geq 3$, be a~convex body and let $p\in\mathbb R^n$. If all hyperplane sections $K\cap H$ passing through $p$ are invariant under (affine) reflection through a~$1$-dimensional subspace of $H$ passing through $p$, and the complementary invariant subspaces are all parallel to a~fixed hyperplane, then $K$ is a~body of (affine) revolution, with axis of revolution passing through $p$, or an ellipsoid.
\end{intro}

\begin{intro}[cf. \thref{thm:18}]\thlabel{intro:02}
Let $K\subset\mathbb R^n$, $n\geq 3$, be a~convex body and let $p\in\mathbb R^n$. If all hyperplane sections $K\cap H$ passing through $p$ are invariant under (affine) rotations about a~$1$-dimensional subspace of $H$ passing through $p$, and the complementary invariant subspaces are all parallel to a~fixed hyperplane, then $K$ is a~body of (affine) revolution, with the axis of revolution passing through $p$.
\end{intro}

\noindent We also obtain the following two results, where the entire family of hyperplanes passing through $p$ is replaced by its $1$-codimensional subfamily:

\begin{intro}[cf. \thref{thm:16}]\thlabel{intro:03}
Let $K\subset\mathbb R^4$ be a~strictly convex body, let $p\in\Int K$ be any point in the interior of $K$, let $T\in\Gr_3(\mathbb R^4)$ be any hyperplane and let $\sigma:\Gr_2(T)\to\Gr_3(\mathbb R^4)$ be a~Lipschitz continuous map such that $\sigma(H)\cap T=H$ for all $H\in\Gr_2(T)$. If every intersection $K\cap(\sigma(H)+p)$ is invariant under affine reflection through a~$1$-dimensional subspace passing through $p$, with complementary invariant subspace $H+p$, then $K$ is a~body of affine reflection, with the axis of reflection passing through $p$.
\end{intro}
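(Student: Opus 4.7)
My plan is to combine the False Center Theorem, applied to a distinguished $3$-dimensional slice of $K$, with an axis-rigidity argument, in order to show that the half-turn axes $\ell(H)$ of the $2$-parameter family of $3$-sections all coincide and yield a single global axis of affine reflection.

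First, for each $H\in\Gr_2(T)$ the half-turn symmetry of $K\cap(\sigma(H)+p)$ about $\ell(H)$ restricts, on the complementary invariant $2$-plane $H+p\subset\sigma(H)+p$, to central symmetry about $p$. Hence $K\cap(H+p)$ is centrally symmetric about $p$ for every $H$. As $H$ ranges over $\Gr_2(T)$, the planes $H+p$ exhaust every $2$-plane through $p$ inside the fixed $3$-hyperplane $T+p$, so every planar section of the convex $3$-body $K\cap(T+p)\subset T+p$ through $p$ is centrally symmetric about $p$. The False Center Theorem \thref{thm:11} in $\mathbb{R}^3$ then forces $K\cap(T+p)$ to be either centrally symmetric about $p$ or else an ellipsoid.

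Next, fix a transversal direction $e\notin T$ and write the axis as $\ell(H)=p+\mathbb{R}(t_H+e)$ for a uniquely determined $t_H\in T$; by the Lipschitz regularity of $\sigma$ together with strict convexity of $K$, the map $H\mapsto t_H$ is Lipschitz. A direct coordinate computation shows that the natural affine involution $\tilde R_b$ of $\mathbb{R}^4$ fixing $p+\mathbb{R}(b+e)$ pointwise and acting as $(v_T,v_e)\mapsto(2v_eb-v_T,v_e)$ in coordinates centred at $p$ extends the sectional half-turn $R_H$ exactly when $b=t_H$, and two such extensions $\tilde R_{t_{H_1}},\tilde R_{t_{H_2}}$ agree on the common $2$-plane $(\sigma(H_1)\cap\sigma(H_2))+p$ iff $t_{H_1}=t_{H_2}$. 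The key claim is that $t_H$ is constant on $\Gr_2(T)$. I would argue this by contradiction: any nontrivial variation in $t_H$ leads, on a generic pair of intersecting $3$-sections, to a common $2$-plane $P$ whose $2$-section $K\cap P$ admits two distinct reflection axes, hence (up to isolated dihedral exceptions ruled out by strict convexity) forces $K\cap P$ to be an ellipse; accumulating this over enough pairs contradicts $K$ not being an ellipsoid, and the ellipsoid case is trivially a body of affine reflection.

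Once $t_H\equiv b$ is constant, the affine involution $\tilde R$ fixing $\ell\colonequals p+\mathbb{R}(b+e)$ and negating $T$ relative to $p$ restricts to $R_H$ on every $\sigma(H)+p$. Because $\bigcup_{H\in\Gr_2(T)}(\sigma(H)+p)=\mathbb{R}^4$, invariance of every $3$-section under $R_H$ upgrades to $\tilde R(K)=K$, exhibiting $K$ as a body of affine reflection with axis $\ell$ passing through $p$. The main obstacle is the axis-constancy step: $H\mapsto t_H$ a priori enjoys a nontrivial degree of freedom within each coset $n_H+H\subset T$ determined by $\sigma(H)=H+\mathbb{R}(n_H+e)$, and eliminating this freedom requires a careful interplay between the central-symmetry conclusion of the False Center step on $K\cap(T+p)$, the pairwise compatibility of half-turns on intersecting $3$-sections, strict convexity rigidity on $2$-dimensional common sections, and the Lipschitz regularity of $\sigma$.
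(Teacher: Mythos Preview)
Your plan diverges substantially from the paper's argument, and the central step---axis constancy---has a genuine gap.

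\textbf{The main gap.} You claim that if $t_{H_1}\neq t_{H_2}$ then the common $2$-plane $P=(\sigma(H_1)+p)\cap(\sigma(H_2)+p)$ inherits two distinct reflection symmetries of $K\cap P$. But the sectional half-turn $R_{H_1}$ does \emph{not} preserve $P$ in general. Working in $\sigma(H_1)$ (with $p$ at the origin), $R_{H_1}$ has $+1$-eigenspace $\ell(H_1)$ and $-1$-eigenspace $H_1$; a $2$-subspace $P$ is invariant only when $P\supset\ell(H_1)$ or $P=H_1$. Both conditions are non-generic: $P=H_1$ forces $H_1=H_2$, while $\ell(H_1)\subset\sigma(H_2)$ is a codimension-one constraint. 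So for a generic pair you get $R_{H_1}(K\cap P)=K\cap R_{H_1}(P)\neq K\cap P$, and no symmetry of $K\cap P$ arises. The same obstruction applies to the global extensions $\tilde R_{t_{H_i}}$, which preserve $K$ only on $\sigma(H_i)+p$. Furthermore, even if $K\cap P$ did acquire two distinct axes of \emph{affine} reflection, this would not force it to be an ellipse: a planar body can have many affine reflection axes without being an ellipse (the paper flags exactly this subtlety in the remark following \thref{lem:01}), and strict convexity does not exclude dihedral symmetry.

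\textbf{Minor point.} Your Step~1 over-invokes the False Center Theorem: since every $2$-section of $K\cap(T+p)$ through $p$ is centrally symmetric \emph{about $p$}, central symmetry of $K\cap(T+p)$ about $p$ is immediate. In any case, this conclusion is never used downstream in your sketch.

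\textbf{How the paper proceeds instead.} The paper does not attempt to prove axis constancy directly. It slices $K$ by hyperplanes $T'$ \emph{parallel} to $T$ and projects centrally from $p$ to obtain a one-parameter family $(K_\tau)_\tau$ of $3$-dimensional bodies whose $2$-sections by $(\sigma(H)+p)\cap T'$ share a common center for each fixed $H$. The Lipschitz hypothesis is used, not to make $t_H$ Lipschitz, but to prove a cone lemma (\thref{lem:08}): there is a blunt cone about $T^\perp$ meeting every pairwise intersection $\sigma(H_1)\cap\sigma(H_2)$. This guarantees that for some $\tau_*$ the hypotheses of Montejano's \thref{thm:05} hold, so $K_{\tau_*}$ is centrally symmetric about some $c_*$. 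A short Borsuk--Ulam/odd-map argument then propagates the center $c_*$ to every $K_\tau$, which yields the global affine reflection with axis $\langle p,c_*\rangle$. The Lipschitz condition and the parallel-slice reduction are the two ideas your plan is missing.
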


\begin{intro}[cf. \thref{thm:02}]\thlabel{intro:04}
Let $K\subset\mathbb R^n$, $n\geq 4$, be a~convex body, let $p\in\Int K$ be any point in the interior of $K$, let $T\in\Gr_{n-1}(\mathbb R^n)$ be any hyperplane and let $\sigma:\Gr_{n-2}(T)\to\Gr_{n-1}(\mathbb R^n)$ be a~continuous map such that $\sigma(H)\cap T=H$ for all $H\in\Gr_{n-2}(T)$. If every intersection $K\cap(\sigma(H)+p)$ is invariant under affine revolution about a~$1$-dimensional subspace passing through $p$, with complementary invariant subspace $H+p$, then $K$ is a~body of affine revolution, with the axis of revolution passing through $p$.
\end{intro}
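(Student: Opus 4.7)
The plan is to derive a common axis of revolution for $K$ from the family of axes of the hyperplane sections $S_H = K \cap (\sigma(H)+p)$, and then assemble the section-wise rotation groups into a global affine $\SO(n-1)$-action preserving $K$. After translating so that $p = 0$ and choosing coordinates adapted to the fixed hyperplane $T$, the hypothesis provides, for each $H \in \Gr_{n-2}(T)$, an axis $\ell_H \subset \sigma(H)$ (a line through $0$ transverse to $H$) and an affine $\SO(n-2)$-action $G_H$ on $\sigma(H)$ fixing $\ell_H$ pointwise and rotating the equator $H$. Standard Hausdorff-continuity arguments show that $H \mapsto \ell_H$ is a continuous map $\Gr_{n-2}(T) \to \Gr_1(\mathbb R^n)$.

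The core step is to prove that the axes $\ell_H$ coincide in a single line $L$ (or, in degenerate ellipsoidal cases, can be chosen to do so). The assumption $n \geq 4$ is essential here, since through any $H_0 \in \Gr_{n-3}(T)$ passes a one-parameter family of $H \in \Gr_{n-2}(T)$, yielding a one-parameter family of rotationally symmetric sections all containing the $(n-3)$-subspace $H_0$. Comparing two such sections $S_{H_1}$ and $S_{H_2}$, the supporting hyperplanes meet in an $(n-2)$-dimensional subspace $F = \sigma(H_1) \cap \sigma(H_2) \supset H_0$, and the common section $K \cap F$ inherits large stabilizers from both $G_{H_1}$ and $G_{H_2}$. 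A careful analysis of their joint action (which contains two copies of $\O(n-3)$ with in general different fixed sets) shows that either $\ell_{H_1} = \ell_{H_2}$ or $K \cap F$ is forced to be an ellipsoid, and a continuity/propagation argument along arcs in $\Gr_{n-2}(T)$ then pieces the axes together into a single global line $L$.

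Once a common $L$ is established, $L \subset \sigma(H)$ for every $H$, so each $G_H$ acts on $\sigma(H) \supset L$ fixing $L$. These actions can be assembled into an affine $\SO(n-1)$-action on $\mathbb R^n$ fixing $L$ pointwise and preserving $K$; equivalently, one may verify that the full hypothesis of \thref{intro:02} is met with $T$ as the aligning hyperplane, by extending the symmetry from the $(n-2)$-parameter family $\{\sigma(H)+p\}$ to the full $(n-1)$-parameter family of hyperplanes through $p$ whose equator lies in $T$, and then invoke \thref{intro:02} directly.

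The main obstacle is the coincidence of axes in the middle step. A priori $H \mapsto \ell_H$ could vary continuously and nontrivially, and ruling this out requires a delicate rigidity argument on overlapping sections, together with separate treatment of the ellipsoidal degenerate case in which the axis of revolution of a section is not uniquely determined. The remaining steps---continuity, and assembly of the global group action (or reduction to \thref{intro:02})---are expected to be routine in comparison.
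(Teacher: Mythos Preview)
Your plan has a genuine gap at its core step: the claim that the section axes $\ell_H$ must coincide in a single line $L$ is false, already in the model case. Take $K\subset\mathbb R^4$ a body of (orthogonal) $1$-revolution about $L=\langle\boldsymbol e_4\rangle$, $T=\langle\boldsymbol e_4\rangle^\perp$, $p=\boldsymbol 0$, and $H=\langle\boldsymbol e_1,\boldsymbol e_2\rangle$. If $\sigma(H)=\langle\boldsymbol e_1,\boldsymbol e_2,\boldsymbol e_4\rangle$ then $\ell_H=\langle\boldsymbol e_4\rangle$, but if $\sigma(H)=\langle\boldsymbol e_1,\boldsymbol e_2,\boldsymbol e_3+\boldsymbol e_4\rangle$ then $K\cap\sigma(H)$ is still a body of $1$-revolution with hyperplane of revolution $H$, yet its axis is $\ell_H=\langle\boldsymbol e_3+\boldsymbol e_4\rangle$. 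Since $\sigma$ is an \emph{arbitrary} continuous map with $\sigma(H)\cap T=H$, the hyperplanes $\sigma(H)$ have no reason to share a common line, and the axes $\ell_H$ genuinely move with $H$. So the ``delicate rigidity argument on overlapping sections'' you are hoping for cannot exist: there is nothing to rule out, because in the very conclusion you are aiming for the axes already vary. Your alternative route---upgrading to the full hypothesis of \thref{intro:02}---has the same problem: you would need symmetry of \emph{every} hyperplane section through $p$, not just those in $\sigma(\Gr_{n-2}(T))$, and no mechanism is proposed for this extension.

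The paper's proof avoids axes altogether. The key observation (\sref{sec:07}) is that since the quotient representation of each $G_H$ on $\sigma(H)/H$ is trivial, every affine hyperplane $T'$ parallel to $T$ is $G_H$-invariant; hence $K\cap T'\cap(\sigma(H)+p)$ is an $(n-2)$-dimensional ellipsoid for every $H$. This reduces the problem to showing that each parallel slice $K\cap T'$ is an ellipsoid, and that all these ellipsoids are concentric and homothetic after a suitable central projection. For a reference slice $K_{\tau_*}$ close enough to $p$ (so that the $\sigma$-family sweeps all directions in it), the Bianchi--Gruber theorem (\thref{thm:20}) gives that $K_{\tau_*}$ is an ellipsoid; the homothety is then propagated to all other slices by comparing along individual hyperplanes $\sigma(H)+p$. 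The global axis of revolution of $K$ emerges only at the very end, as the line through $p$ and the common center $c_*$ of the slices---it is never obtained by matching the section axes $\ell_H$.
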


\noindent We also obtain an extension of \thref{intro:01} to the case of orthogonal reflections through $k$-dimensional subspaces for any $1\leq k<n-1$:

\begin{intro}[cf. \thref{thm:22}]\thlabel{intro:05}
Let $K\subset\mathbb R^n$, $n\geq 3$, be a~convex body, and let $p\in\mathbb R^n$ be any point of the ambient space. If all hyperplane sections $K\cap H$ passing through $p$ are invariant under reflection through a~$k$-dimensional subspace of $H$ passing through $p$, and the complementary invariant subspaces are all parallel to a~fixed $k$-codimensional hyperplane, $1\leq k<n-1$, then $K$ is a~body of $k$-revolution, with hyperaxis of $k$-revolution passing through $p$.
\end{intro}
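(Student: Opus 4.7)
The plan is to reduce \thref{intro:05} to \thref{intro:01} by slicing $K$ with $(n-k+1)$-dimensional affine subspaces containing the fixed codimension-$k$ direction $W$, and then to assemble the resulting slice-wise revolution axes into a single $k$-dimensional hyperaxis. For $k=1$ the hypothesis coincides with that of \thref{intro:01} and the conclusion is essentially the same, so I may assume $k\geq 2$.

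Write $\mathbb{R}^n=V\oplus W$ with $W$ the fixed linear subspace of dimension $n-k$ whose translate $p+W$ contains all complementary invariant subspaces, and with $V$ any linear complement, $\dim V=k$. For each line $\ell\in\Gr_1(V)$ set $L_\ell\colonequals p+\ell+W$; this is an $(n-k+1)$-dimensional affine subspace through $p$ in which $p+W$ is an affine hyperplane. I claim that $K\cap L_\ell$ satisfies the hypothesis of \thref{intro:01} in the ambient $L_\ell$, with distinguished point $p$ and fixed hyperplane $p+W$. Indeed, given any hyperplane $H_L\subset L_\ell$ through $p$, the hyperplanes $H\subset\mathbb{R}^n$ through $p$ that contain $H_L$ form a pencil of projective dimension $k-1$. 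For generic $H$ in this pencil, the $k$-dimensional axis $A_H$ supplied by the hypothesis meets $H_L$ transversely in a single line (the dimension count being $k+(n-k)-(n-1)=1$; the transversality $A_H\cap(p+W)=\{p\}$ built into the complementary-subspace condition rules out $A_H\subseteq H_L$ when $k\geq 2$, since the latter would force $\dim(A_H\cap(p+W))\geq k-1\geq 1$). The restriction of the reflection on $K\cap H$ to the $\rho_H$-invariant subspace $H_L$ then fixes this line and negates $H_L\cap(p+W)$---exactly the reflection data required by \thref{intro:01}. We conclude that each slice $K\cap L_\ell$ is a body of revolution (the ellipsoid alternative being absorbed as an ellipsoid of revolution) with axis $A_\ell\subset L_\ell$ a line through $p$ transverse to $W$.

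The main obstacle is the remaining step: assembling $\{A_\ell\}_{\ell\in\Gr_1(V)}$ into the lines through $p$ of a single $k$-dimensional affine subspace $A\subset\mathbb{R}^n$, which will serve as the hyperaxis of $k$-revolution. Each $A_\ell$ projects onto $\ell$ under the quotient $\mathbb{R}^n\to\mathbb{R}^n/W\cong V$, so $\ell\mapsto A_\ell$ is a section of that quotient, and its linearity is equivalent to the existence of $A$. The expected argument combines continuity of $H\mapsto A_H$ (in the spirit of the preceding theorems in the paper) with the rigidity enforced on two distinct slices $L_{\ell_1}$, $L_{\ell_2}$ by a common hyperplane $H\subset\mathbb{R}^n$ whose axis $A_H$ meets both, forcing the compatibility $A_H\cap L_{\ell_i}=A_{\ell_i}$ and hence linearity of $\ell\mapsto A_\ell$. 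Once $A$ is extracted, the reflections from each slice combine to generate the full $\O(n-k)$-action fixing $A$ and rotating the parallel translates of $W$, realizing $K$ as the body of $k$-revolution with hyperaxis $A$.
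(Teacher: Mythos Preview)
Your slicing approach is quite different from the paper's, which does not reduce to \thref{intro:01} at all. The paper proves a slightly more general statement (\thref{thm:21}) directly: since the setting is orthogonal, the hyperaxis of reflection in each section $K\cap H$ is the orthogonal complement in $H$ of the hyperplane of reflection, and since the latter is always $(p+W)\cap H$, the global hyperaxis must be $A=p+W^\perp$ from the outset. With $A$ identified in advance, the paper simply verifies that $K$ is invariant under reflection through every hyperplane $H_*\supset A$. For a generic line $L\perp H_*$, one constructs explicitly a hyperplane $H\ni p$ containing $L$ such that $L\cap H_*$ lies on the hyperaxis of reflection of $K\cap H$ while $L$ is parallel to the hyperplane of reflection; this forces $K\cap L$ to be symmetric about $H_*$. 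The argument is elementary linear algebra in the spirit of \thref{thm:08} and invokes neither many-body lemmas nor ellipsoid characterizations.

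Your route has a genuine gap. You yourself flag the assembly of the axes $A_\ell$ into a single $k$-plane as ``the main obstacle'' and offer only a heuristic for it; but in the orthogonal setting this obstacle is illusory, since $A=p+W^\perp$ is forced a priori---you never use that the hyperaxis and hyperplane of reflection are orthogonal complements. You also dismiss the ellipsoid branch of \thref{intro:01} too quickly: an ellipsoid need not be a body of \emph{orthogonal} $1$-revolution, so when a slice $K\cap L_\ell$ turns out elliptical the axis $A_\ell$ is not immediately well-defined. Two smaller issues: your claim that $H_L$ is $\rho_H$-invariant for generic $H$ in the pencil needs the decomposition $H_L=(H_L\cap A_H)\oplus(H_L\cap(p+W))$, a dimension count you gesture at but do not carry out; and the distinguished hyperplane $H_L=p+W$ inside each $L_\ell$ fails the $1$-dimensional-axis hypothesis of \thref{intro:01} outright (the restricted reflection there is central), so strictly speaking \thref{intro:01} does not apply as stated.
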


The underlying idea behind the proofs of these theorems can be illustrated by a~simplified three-dimensional case. Let $K\subset\mathbb R^3$ be a~convex body. Suppose there exists a~point $p\in\mathbb R^3$ and a~plane $T$ not containing $p$ such that, for every plane $H$ passing through $p$, the intersection $K\cap H$ admits an axis of affine symmetry through $p$, with the complementary invariant subspace parallel to $T$. Then, for any plane $T'$ parallel to $T$, the intersection of the axis of symmetry with $T'$ coincides with the center of the chord $K\cap T'\cap H$. Projecting all parallel sections $K\cap T'$ from $p$ onto a~single plane $T$ produces a~continuous family of convex bodies in $T$ whose intersections with any line in $T$ share a~common center. Such a~configuration is possible only if the projections are concentric, homothetic ellipses, which in turn implies that $K$ is a~body of affine revolution with respect to the axis through $p$ and the common center of these projections. In higher dimensions, the second part of the argument becomes substantially more intricate. Nevertheless, the point of intersection of the axis of affine symmetry with $K\cap T'\cap H$ is not merely the center of mass but the center of symmetry of $K\cap T'\cap H$. By classical results such as Brunn's theorem, one can therefore assume from the outset that $K\cap T'$ is an ellipsoid. This observation significantly simplifies the higher-dimensional argument, reducing it essentially to linear algebra. We elaborate on this paradigm in detail in \sref{sec:07}, after introducing the necessary terminology.\\

The motivation of this paper is three-fold. First, we prove several new theorems which lay the foundations for generalizing Bezdek's conjecture to higher dimensions. Since every group of symmetries of a~convex body contains a~group of reflections as a~subgroup, the case we address is the most basic and the most difficult at the same time. Secondly, we introduce a~unified language in which results such as Brunn's theorem, the False Center Theorem, Bezdek's conjecture, and many more can be easily seen as special cases of a~more general problem. Finally, we survey in detail many historical results related to the symmetries of sections of convex bodies, providing readers with a~comprehensive overview of the solved problems and open questions in this area.\\

The structure of the paper is the following: In \sref{sec:10}, we introduce the definitions of group invariance, the action of the orthogonal group by reflections and rotations, and the notions of pseudo- and quasi-center of symmetry, which we will need to formulate our results. In \sref{sec:01}, we give a~thorough historical exposition of numerous theorems in geometric tomography and beyond, that may be viewed as specific instances of the general problem we consider. In \sref{sec:02}, we introduce the key notion of an \emph{aligned center of symmetry} (see \thref{def:03,def:05}), distilled from the ideas present in \cite{Alfonseca} and \cite{faor}. We also outline the general paradigm that guides our work and formulate our new results. In \sref{sec:06}, we present a~general argument that allows us to deduce the orthogonal variants of some of our inherently affine theorems. In \sref{sec:04}, we give the complete proofs of all the new results. Meanwhile, we also state several open questions that we ourselves find interesting, and partially answer some of them right away. We conclude the paper with some tangential comments in \sref{sec:11}.

\section{Definitions and basic concepts}\label{sec:10}

We denote by $\mathbb R^n$, $n\geq 2$, the $n$-dimensional Euclidean space equipped with the standard inner product $\cdot$ and induced norm $\|\cdot\|$. Euclidean space carries the structure of both a~linear and an affine space. For any affine subspace $V\subset\mathbb R^n$, we denote its associated vector space by $\vec V$. The (affine) span of a~set $S\subset\mathbb R^n$ is denoted $\langle S\rangle$. Also, we denote by $\mathbb B^n$ the Euclidean unit ball in $\mathbb R^n$ and by $\mathbb S^{n-1}$ the Euclidean unit sphere in $\mathbb R^n$. A~\emph{convex body} $K\subset\mathbb R^n$ is a~compact convex set with a~non-empty interior. By $\Gr_k(\mathbb R^n)$, $1\leq k<n$, we denote the \emph{Grassmannian} of all $k$-dimensional linear subspaces of $\mathbb R^n$ and by $\Graff_k(\mathbb R^n)$, $1\leq k<n$, we denote the \emph{affine Grassmannian} of all $k$-dimensional affine subspaces of $\mathbb R^n$. The Grassmannian comes equipped with a~canonical orthogonally invariant metric space structure (see \cite[\S 4]{Wong}).\\

Let $K\subset\mathbb R^n$, $n\geq 2$, be a~convex body and let $G\leq\GA(\mathbb R^n)$ be a~compact subgroup of volume-preserving affine automorphisms of the ambient space. We say that $K$ is \emph{invariant under the action of $G$} if $gK=K$ for every $g\in G$. Clearly, the group $G$ acts on $\Graff_k(\mathbb R^n)$. For an affine subspace $H\in\Graff_k(\mathbb R^n)$, consider $A=\Stab_G(H)=\{g\in G\mid gH=H\}$. Since both $K$ and $H$ are invariant under the action of $A$, so is the intersection $K\cap H$. Besides, $gH$ is an invariant subspace of $gAg^{-1}$ for every $g\in G$. This gives rise to the entire family $\mathcal O\subseteq\Graff_k(\mathbb R^n)$ of affine subspaces such that for every $gH\in\mathcal O$, the intersection $K\cap gH$ is invariant under the action of the group $gA\vert_Hg^{-1}$, which is affinely conjugate to $A\vert_H$ by $g\in G$. The orbit-stabilizer theorem (see \cite[Lemma~2.18]{kirillov2017introduction}) yields
$$\dim G-\dim A=\dim\mathcal O,$$
which means that the family $\mathcal O$ must be considerably large. Hence, if a~convex body $K$ is invariant under the action of a~group $G\leq\GA(\mathbb R^n)$, many of its sections by affine subspaces of dimension $k$ are themself invariant under the action of a~group affinely conjugate to a~fixed subgroup $A\leq\O(k,\mathbb R)$.\\

One notable example is a~convex body $K\subset\mathbb R^n$, $n\geq 2$, invariant under the action of a~group isomorphic to $\O(n-k,\mathbb R)$, generated by reflections through affine hyperplanes containing a~fixed $k$-dimensional affine subspace $V^\perp+a\in\Graff_k(\mathbb R^n)$, $0\leq k<n$. Then the intersection $K\cap(H+b)$ of $K$ with any affine subspace $H+b$ is invariant under the action of a~group generated by reflections through affine hyperplanes containing $(V\cap H)^\perp+b$, isomorphic to $\O(\dim(V\cap H),\mathbb R)$. In general, $K\cap(H+b)$ does not possess any other symmetries.\\

Another example is a~convex body $K\subset\mathbb R^n$, $n\geq 2$, invariant under the action of a~group isomorphic to $\O(1,\mathbb R)$, generated by the reflection through a~fixed $k$-dimensional affine subspace $V^\perp+a\in\Graff_k(\mathbb R^n)$, $0\leq k<n$. Then every affine subspace $H_1+b$ contains a~(possibly trivial) affine subspace $H_2+b\colonequals(V^\perp\cap H_1)\oplus(V\cap H_1)+b$ such that the intersection $K\cap(H_2+b)$ is invariant under the action of a~group generated by the reflection through $(V^\perp\cap H_2)+b$ (again, isomorphic to $\O(1,\mathbb R)$). In general, $K\cap(H_i+b)$, $i=1,2$, does not possess any other symmetries.\\

It is interesting to question whether we may reverse these implications. In other words, can we deduce the symmetry group of a~convex body $K\subset\mathbb R^n$, $n\geq 2$, knowing that a~considerably large family of its sections by $k$-dimensional affine subspaces is invariant under the action of a~group affinely conjugate to a~fixed subgroup $A\leq\O(k,\mathbb R)$? The most general problem of this type may be formulated as follows:

\begin{problem}\thlabel{pro:01}
Let $K\subset\mathbb R^n$, $n\geq 3$, be a~convex body, let $\mathcal F_1,\mathcal F_2,\ldots,\mathcal F_m\subseteq\Graff_k(\mathbb R^n)$ be families of affine subspaces and let $A_1,A_2,\ldots,A_m\leq\O(k,\mathbb R)$ be closed subgroups of the orthogonal group. If all the intersections $K\cap H$ of $K$ with affine subspaces $H\in\mathcal F_i$ are invariant under the action of a~group affinely conjugate to $A_i$, $i=1,2,\ldots,m$, then $K$ is itself invariant under the actions of a~group affinely conjugate to a~subgroup $G\leq\O(n,\mathbb R)$.
\end{problem}

\subsection{Representations of the orthogonal group}\label{sec:14}

Since in high dimensions, the familiar low-dimensional concepts may become slightly awkward, let us now give the general definitions and recall some basic facts from the representation theory of Lie groups.\\

Let $K\subset\mathbb R^n$, $n\geq 3$, be a~convex body. The set $G$ of affine symmetries of $K$ (i.e., the set of affine automorphisms $g\in\GA(\mathbb R^n)$ of the ambient space such that $gK=K$) has the structure of a~compact subgroup of $\GL(n,\mathbb R)$, by a~simple geometric continuity argument. As such, it is affinely conjugate to a~subgroup of the orthogonal group $\O(n,\mathbb R)$ (see \cite[Theorem~4.38]{kirillov2017introduction}). Indeed, all affine symmetries of $K$ must also preserve, in particular, the John ellipsoid of $K$ \cite[\S4]{john1948extremum}, whose symmetry group is affinely conjugate to $\O(n,\mathbb R)$. Therefore, from now on, we will restrict our attention to the closed subgroups of the orthogonal group.\\

A single abstract Lie group may act on $\mathbb R^n$ in many essentially different ways. The simplest example is $\O(1,\mathbb R)$, the cyclic group of order $2$, which may act on $\mathbb R^4$ by reflections through subspaces of different dimensions:
$$\begin{minipage}{.25\linewidth}\begin{equation}\label{eq:01}\begin{bmatrix}\pm\\&+\\&&+\\&&&+\end{bmatrix},\end{equation}\end{minipage}
\begin{minipage}{.25\linewidth}\begin{equation}\label{eq:02}\begin{bmatrix}\pm\\&\pm\\&&+\\&&&+\end{bmatrix},\end{equation}\end{minipage}
\begin{minipage}{.25\linewidth}\begin{equation}\label{eq:03}\begin{bmatrix}\pm\\&\pm\\&&\pm\\&&&+\end{bmatrix},\end{equation}\end{minipage}
\begin{minipage}{.25\linewidth}\begin{equation}\label{eq:04}\begin{bmatrix}\pm\\&\pm\\&&\pm\\&&&\pm\end{bmatrix}.\end{equation}\end{minipage}$$
Note that \eqref{eq:01} corresponds to the group of reflections through the hyperplane $\langle\boldsymbol e_2,\boldsymbol e_3,\boldsymbol e_4\rangle$, \eqref{eq:03} corresponds to the group of reflections through the axis $\langle\boldsymbol e_4\rangle$, and \eqref{eq:04} corresponds to the group of reflections through the point $\mathbf 0$. Similarly, the special orthogonal group $\SO(3,\mathbb R)$ may act on $\mathbb R^4$ by two non-isomorphic representations (see \cite[\S6.1.2]{gl4}):
$$\begin{minipage}{.5\linewidth}\begin{equation}\label{eq:05}\begin{bmatrix}a_{11}&a_{12}&a_{13}&\\a_{21}&a_{22}&a_{23}&\\a_{31}&a_{32}&a_{33}&\\&&&1\end{bmatrix}\quad\begin{aligned}\boldsymbol A^\top\boldsymbol A&=\mathbf I_3\\\det\boldsymbol A&=1\end{aligned}\ ,\end{equation}\end{minipage}
\begin{minipage}{.5\linewidth}\begin{equation}\label{eq:06}\begin{bmatrix}a_1&-a_2&-a_3&-a_4\\a_2&a_1&-a_4&a_3\\a_3&a_4&a_1&-a_2\\a_4&-a_3&a_2&a_1\end{bmatrix}\quad\begin{aligned}\boldsymbol a^\top\boldsymbol a&=1\\\end{aligned}\ .\end{equation}\end{minipage}$$
Note that \eqref{eq:05} corresponds to the group of rotations about the axis $\langle\boldsymbol e_4\rangle$, whereas \eqref{eq:06} corresponds to the group of unit quaternions and does not have any non-trivial fixed points. Nevertheless, both \eqref{eq:05} and \eqref{eq:06} are subgroups of $\SO(4,\mathbb R)$ isomorphic to $\SO(3,\mathbb R)$. Therefore, by the action of a~group, we always mean the action of its particular representation.

\begin{definition}[{\cite[Definition~2.16, Definition~4.9]{kirillov2017introduction}}]
A \emph{representation} of a~Lie group $G$ is a~vector space $V$ together with a~group morphism $\rho:G\to\End(V)$. In other words, we assign to every $g\in G$ a~linear map $\rho(g):V\to V$ so that $\rho(g)\rho(h)=\rho(gh)$. A~\emph{subrepresentation} is a~vector subspace $W\subset V$ stable under the action, i.e., $\rho(g)W\subset W$ for all $g\in G$. If $W\subset V$ is a~subrepresentation, then the quotient space $V/W$ has a~canonical structure of a~representation. It will be called the \emph{quotient representation}.
\end{definition}

\begin{definition}\thlabel{def:04}
By the canonical action of the orthogonal group $\O(1,\mathbb R)$ by reflections through a~subspace of dimension $k$, $0\leq k<n$, we mean the representation conjugate to \eqref{eq:07} and denote it by $\pi_k$. The fixed-point subspace of this representation is called the \emph{hyperaxis of $k$-reflection} and the complementary subrepresentation space is called the \emph{hyperplane of $k$-reflection}. Similarly, by canonical action of the orthogonal group $\O(n-k,\mathbb R)$ by rotations about a~subspace of dimension $k$, $0\leq k<n$, we mean the representation conjugate to \eqref{eq:08} and denote it by $\rho_k$. The fixed-point subspace of this representation is called the \emph{hyperaxis of $k$-revolution} and the complementary subrepresentation space is called the \emph{hyperplane of $k$-revolution}.
$$\begin{minipage}{.5\linewidth}\begin{equation}\label{eq:07}\begin{tikzpicture}[baseline={([yshift=-.5\ht\strutbox+.5\dp\strutbox]base)}]
\matrix[matrix of math nodes, nodes in empty cells, nodes = {inner sep = 0pt, anchor = center}, column 1/.style = {minimum width = 7em}, column 2/.style = {minimum width = 3em}, row 1/.style = {minimum height = 7em}, row 2/.style = {minimum height = 3em}, left delimiter={[}, right delimiter={]}, row sep = \pgflinewidth, column sep = \pgflinewidth] (mat) {\mathbf I_{n-k}\otimes\O(\mathbb R^1)&\mathbf 0\\\mathbf 0&\mathbf I_k\\};
\coordinate (base) at ($(mat-1-1.north west)!.5!(mat-2-1.south west)$);
\node [anchor = base east, xshift = -2\tabcolsep, yshift=-.5\ht\strutbox+.5\dp\strutbox] at (base) {$\pi_k\sim$};
\draw [decorate, decoration = {calligraphic brace, amplitude = 5pt, raise = 3pt}, line width = 1pt] (mat-1-1.north west) -- (mat-1-1.north east) node[above = 8pt, midway] {$n-k$};
\draw [decorate, decoration = {calligraphic brace, amplitude = 5pt, raise = 3pt}, line width = 1pt] (mat-1-2.north west) -- (mat-1-2.north east) node[above = 8pt, midway] {$k$};
\draw [dashed] (mat-1-1.north west) rectangle (mat-1-1.south east);
\draw [dashed] (mat-2-2.north west) rectangle (mat-2-2.south east);
\end{tikzpicture},\end{equation}\end{minipage}
\begin{minipage}{.5\linewidth}\begin{equation}\label{eq:08}\begin{tikzpicture}[baseline={([yshift=-.5\ht\strutbox+.5\dp\strutbox]base)}]
\matrix[matrix of math nodes, nodes in empty cells, nodes = {inner sep = 0pt, anchor = center}, column 1/.style = {minimum width = 7em}, column 2/.style = {minimum width = 3em}, row 1/.style = {minimum height = 7em}, row 2/.style = {minimum height = 3em}, left delimiter={[}, right delimiter={]}, row sep = \pgflinewidth, column sep = \pgflinewidth] (mat) {\mathbf I_1\otimes\O(\mathbb R^{n-k})&\mathbf 0\\\mathbf 0&\mathbf I_k\\};
\node [anchor = base east, xshift = -2\tabcolsep, yshift=-.5\ht\strutbox+.5\dp\strutbox] at (base) {$\rho_k\sim$};
\draw [decorate, decoration = {calligraphic brace, amplitude = 5pt, raise = 3pt}, line width = 1pt] (mat-1-1.north west) -- (mat-1-1.north east) node[above = 8pt, midway] {$n-k$};
\draw [decorate, decoration = {calligraphic brace, amplitude = 5pt, raise = 3pt}, line width = 1pt] (mat-1-2.north west) -- (mat-1-2.north east) node[above = 8pt, midway] {$k$};
\draw [dashed] (mat-1-1.north west) rectangle (mat-1-1.south east);
\draw [dashed] (mat-2-2.north west) rectangle (mat-2-2.south east);
\end{tikzpicture}.\end{equation}\end{minipage}$$
\end{definition}

\subsection{Pseudo- and quasi-center of symmetry}\label{sec:15}

Finally, to phrase all the problems from different fields of mathematics using one unified language, let us introduce some important notions:

\begin{definition}
Let $K\subset\mathbb R^n$, $n\geq 3$, be a~convex body. We say that $K$ is a~body of (affine) $k$-reflection, $0\leq k<n$, if $K$ is invariant under the action of a~group (affinely) conjugate to $\pi_k$. Similarly, we say that $K$ is a~body of (affine) $k$-revolution, $0\leq k<n$, if $K$ is invariant under the action of a~group (affinely) conjugate to $\rho_k$.
\end{definition}

\begin{definition}\thlabel{def:01}
Let $K\subset\mathbb R^n$, $n\geq 3$, be a~convex body, and let $p\in\mathbb R^n$ be any point of the ambient space. We say that $p$ is a~\emph{pseudo-center of (affine) $k$-reflection}, $0\leq k<n-1$, if every intersection $K\cap H$ of $K$ with an affine hyperplane $H\in\Gr_{n-1}(\mathbb R^n)+p$ is invariant under the action of a~group (affinely) conjugate to $\pi_k$. Similarly, we say that $p$ is a~\emph{pseudo-center of (affine) $k$-revolution}, $0\leq k<n-1$, if every intersection $K\cap H$ of $K$ with an affine hyperplane $H\in\Gr_{n-1}(\mathbb R^n)+p$ is invariant under the action of a~group (affinely) conjugate to $\rho_k$.
\end{definition}

\noindent In particular, a~pseudo-center of $k$-revolution is a~pseudo-center of $k$-reflection, $0\leq k<n-1$. Note also that in \thref{def:01}, we do not assume that $p$ is fixed by the group action. Clearly, if $K$ is a~body of (affine) $k$-revolution, then every point of the ambient space is a~pseudo-center of $k$-revolution (and thus also a~pseudo-center of $k$-reflection). \thref{def:01} generalizes the concept of a~\emph{Larman point} (cf. \cite[Definition~1]{Alfonseca}) and unifies it with the previously defined concept of a~\emph{pseudo-center} (cf. \cite[Introduction]{Larman}).

\begin{definition}\thlabel{def:02}
Let $K\subset\mathbb R^n$, $n\geq 3$, be a~convex body, and let $p\in\mathbb R^n$ be any point of the ambient space. We say that $p$ is a~\emph{quasi-center of (affine) $k$-reflection}, $0\leq k<n-1$, if every intersection $K\cap H$ of $K$ with an affine hyperplane $H\in\Gr_{n-1}(\mathbb R^n)+p$ is invariant under the action of a~group (affinely) conjugate to $\pi_k$ and fixing $p$. Similarly, we say that $p$ is a~\emph{quasi-center of (affine) $k$-revolution}, $0\leq k<n-1$, if every intersection $K\cap H$ of $K$ with an affine hyperplane $H\in\Graff_{n-1}(\mathbb R^n)$ is invariant under the action of a~group (affinely) conjugate to $\rho_k$ and fixing $p$.
\end{definition}

\noindent Again, a~quasi-center of (affine) $k$-revolution is a~quasi-center of (affine) $k$-reflection, $0\leq k<n-1$. Clearly, if $K$ is a~body of (affine) $k$-revolution, then every point of the hyperaxis of $k$-revolution is a~quasi-center of $k$-revolution (and thus also a~quasi-center of $k$-reflection). \thref{def:02} generalizes the concept of a~\emph{point of revolution} (cf. \cite[Introduction]{faor} and \cite[Definition~2]{Alfonseca}). The aforementioned observation motivated the original definition, as well as the definition of a~\emph{false axis of revolution} (cf. \cite[Introduction]{faor}).

\begin{notation}
Throughout the paper, we understand the prefix $k$- modulo the dimension of the underlying space, e.g., by $(-k)$- we will denote the action of a~group with the fixed-point subspace of codimension $k$.
\end{notation}

\section{Historical background and known results}\label{sec:01}

In this section, we will give a~profound context for our results and formulate some related theorems for later reference.

\subsection{Point reflection}\label{sec:08}

To the best of our knowledge, the whole topic was initiated by H.~Brunn (1889), who in his habilitation thesis proved the following seminal theorem:

\begin{theorem}[{\cite[IV.4]{brunn1889curven}}]\thlabel{thm:03}
Let $K\subset\mathbb R^n$, $n\geq 3$, be a~strictly convex body. If all the intersections $K\cap H$ of $K$ with affine hyperplanes $H\in\Graff_{n-1}(\mathbb R^n)$ are centrally symmetric, then $K$ is an ellipsoid.
\end{theorem}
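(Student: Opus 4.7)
The plan is to derive Brunn's theorem as an immediate corollary of the False Center Theorem quoted above, whose hypothesis is substantially weaker: it only requires central symmetry of hyperplane sections through a \emph{single} fixed non-central point, rather than through every point. Consequently, it will suffice to exhibit one such point $p$ and to invoke the False Center Theorem at $p$.

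First, I would dispose of the choice of $p$. If $K$ already admits a center of symmetry $c$, I pick any $p\in\Int K\setminus\{c\}$, which is possible since $\Int K$ is open. If $K$ admits no center of symmetry at all, then the clause ``$p$ is not the center of symmetry of $K$'' is vacuously satisfied for every $p$, so I may simply take an arbitrary $p\in\Int K$. Either way, the standing assumption of the False Center Theorem on the point $p$ is met.

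With $p$ in hand, the hypothesis of Brunn's theorem yields, in particular, that every hyperplane section $K\cap H$ with $p\in H\in\Graff_{n-1}(\mathbb R^n)$ is centrally symmetric (a strictly weaker statement, since we restrict to hyperplanes through $p$). Applying the False Center Theorem to the pair $(K,p)$ then gives that $K$ is an ellipsoid, completing the proof.

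The real depth of the statement is of course absorbed into the False Center Theorem itself, so the reduction above is essentially cost-free. For a direct, self-contained proof in the spirit of Brunn's original 1889 argument (without invoking the False Center Theorem), the principal obstacle would be establishing that $K$ possesses a center of symmetry at all. The classical route is to track how the centers $c(H)$ of the sections $K\cap H$ vary as $H$ ranges through parallel and rotating families of hyperplanes, and to exploit continuity together with consistency across intersecting sections to force the existence of a global common center. Once central symmetry is secured, characterizing ellipsoids among centrally symmetric convex bodies with centrally symmetric central sections is comparatively routine, e.g., via a quadric-characterization argument on the boundary.
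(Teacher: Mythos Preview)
Your reduction is correct: once the False Center Theorem is available, Brunn's theorem follows by choosing any interior point $p$ distinct from the (at most one) center of symmetry of $K$ and applying the theorem there. Note that your argument does not even use the strict-convexity hypothesis.

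However, there is nothing to compare against: the paper does not supply its own proof of \thref{thm:03}. The theorem is stated in \sref{sec:08} purely as historical background, with a citation to Brunn's 1889 habilitation \cite[IV.4]{brunn1889curven} and a remark that Brunn's original argument carried an implicit regularity assumption later removed by Burton. The paper's original contributions (proved in \sref{sec:04}) concern \thref{thm:14,thm:16,thm:02,thm:21,thm:22}, not this classical result.

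It is worth flagging the anachronism in your write-up: the False Center Theorem (Aitchison--Petty--Rogers 1971, Larman 1974) postdates Brunn by eight decades and is a far deeper result, so while your deduction is logically valid, it inverts the historical and conceptual dependence. Your closing paragraph correctly identifies what a self-contained Brunn-style argument would require.
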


\noindent Actually, Brunn proved \thref{thm:03} only under an additional implicit regularity assumption, which was relaxed much later by G.R.~Burton (1977) \cite[Theorem~2, Theorem~4]{Burton1977}. But even though he assumed that only the sections of sufficiently small diameter are centrally symmetric, it was still an open subset of all the sections of $K$.\\

Meanwhile, W.~Blaschke and G.~Hessenberg (1918) proved an analogous theorem about projections in a~$3$-dimensional space \cite[(II)]{Blaschke1918}. The result was further developed by C.A.~Rogers (1965), who generalized it to arbitrary dimensions. Unable to apply the theory of polar reciprocal convex bodies, he also gave a~direct proof of the dual theorem:

\begin{theorem}[{\cite[Theorem~4 and remark below]{Rogers1965}}]\thlabel{thm:01}
Let $K\subset\mathbb R^n$, $n\geq 3$, be a~convex body and let $p\in\Int K$ be an interior point of $K$. If all the intersections $K\cap H$ of $K$ with affine hyperplanes $H\in\Gr_k(\mathbb R^n)+p$, $2\leq k\leq n$, are centrally symmetric, then $K$ itself is centrally symmetric.
\end{theorem}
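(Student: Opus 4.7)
My plan is to split the argument into two parts: a reduction to the hyperplane case $k=n-1$ by downward induction on $k$, followed by a direct proof of the hyperplane case. For the reduction, the case $k=n$ is vacuous since $K$ is a section of itself. For $2\le k\le n-1$, I assume inductively that the theorem is established both for $(k+1)$-sections in $\mathbb R^n$ and for the hyperplane case in every ambient dimension from $3$ up to $n$. Fixing an arbitrary $(k+1)$-plane $H'\ni p$, I view $K\cap H'$ as a convex body in $H'\cong\mathbb R^{k+1}$; its hyperplane sections through $p$ are precisely the $k$-sections of $K$ through $p$ contained in $H'$, and these are centrally symmetric by hypothesis. The hyperplane case in dimension $k+1$ then forces $K\cap H'$ itself to be centrally symmetric, and varying $H'$ and invoking the inductive hypothesis for $(k+1)$-sections in $\mathbb R^n$ yields that $K$ is centrally symmetric. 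Via a parallel outer induction on $n$, this reduces everything to establishing the hyperplane case $k=n-1$ in each dimension $n\ge 3$.

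For the hyperplane case, I would introduce the continuous center map $c\colon\Gr_{n-1}(\mathbb R^n)\to\mathbb R^n$ sending each linear hyperplane $V$ to the unique center of symmetry $c(V)\in V+p$ of $K\cap(V+p)$, with continuity supplied by the stability of centers under $C^0$-perturbations of the body. The aim is to extract enough rigidity from $c$ to single out a point $q\in\mathbb R^n$ that serves as a global center of symmetry for $K$. My chosen approach is via pencils: for any $(n-2)$-plane $L\ni p$, the hyperplanes containing $L$ form a $1$-parameter family, and all corresponding sections $K\cap H$ share the common $(n-2)$-slice $K\cap L$. Each section carries a local involution $x\mapsto 2c(H)-x$, and I would study how these local involutions interact on the common slice as $H$ rotates about $L$. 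Globalizing across all choices of $L$ via a continuity-and-connectedness argument on the Grassmannian, one would then assemble the local involutions into a single involution of $K$ about a point $q\in\mathbb R^n$.

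The main obstacle is that the centers $c(H)$ need not lie in the common axis $L$, so the local involutions do not restrict to involutions of $K\cap L$, and the patching along a pencil is delicate. I would handle this by an infinitesimal analysis, differentiating $c(H)$ in the pencil parameter at a generic $H$ to obtain a consistency condition that forces $c$ to parametrize a single affine-linear object in $\mathbb R^n$; executing this rigorously, and then verifying that the resulting global involution preserves $K$ along every chord of $K$ and not only those through $p$, is the technical core of the proof and the step I expect to be genuinely hard.
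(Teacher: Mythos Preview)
Your reduction step is valid but proceeds in the opposite direction from the paper's. The paper reduces the case $k\ge 3$ \emph{downward} to Rogers' original $k=2$ theorem by a topological obstruction: fixing a $2$-plane $P\ni p$ and assuming $K\cap P$ is not centrally symmetric, one shows that for every $H\in\Gr_{k-2}(P^\perp)$ the center of $K\cap(H\oplus P)$ has nonzero $H$-component, yielding a nowhere-vanishing continuous tangent field on $\Gr_{k-2}(P^\perp)$, in contradiction with Poincar\'e--Hopf since the Grassmannian has nonzero Euler characteristic. Your reduction instead goes \emph{upward} to $k=n-1$ by applying the hyperplane case inside each $(k+1)$-section; this is correct and entirely elementary, avoiding the topological input, but the price is that your base case is the hyperplane theorem in every ambient dimension $3\le m\le n$, rather than the single $k=2$ statement that Rogers actually established.

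The genuine gap is your treatment of that base case. The paper makes no attempt to reprove it and simply cites Rogers for $k=2$; your proposal, by contrast, commits to proving the hyperplane case directly, and the sketch you give does not close. You correctly identify the obstruction---the centers $c(H)$ need not lie on the common axis $L$, so the section involutions do not restrict compatibly to $K\cap L$---but your proposed remedy (differentiate $c$ along the pencil and extract an ``affine-linear'' constraint) presupposes smoothness of $c$ that is not available for a general convex body, and even granting that, you have not indicated what differential identity would emerge or why it forces a global center. The step from ``local involutions along each pencil'' to ``a single involution of $K$'' is exactly where Rogers' own argument does real work, and nothing in your outline substitutes for it. As written, your reduction is a clean alternative to the paper's, but the proof remains incomplete at the base case.
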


\noindent Note that in \thref{thm:01}, we neither assume that $p$ is the center of symmetry for all the sections nor conclude that $p$ is the center of symmetry for $K$. Actually, Rogers called point $p$ a~\emph{false center of symmetry} if $p$ is a~pseudo-center but not a~center of symmetry, and conjectured that the only $n$-dimensional convex bodies with a~false center of symmetry are ellipsoids:

\begin{theorem}[{False Center Theorem \cite[Conjecture]{Rogers1965}}]\thlabel{thm:11}
Let $K\subset\mathbb R^n$, $n\geq 3$, be a~convex body, and let $p\in\mathbb R^n$ be any point of the ambient space. If all the intersections $K\cap H$ of $K$ with affine hyperplanes $H\in\Gr_{n-1}(\mathbb R^n)+p$ are centrally symmetric, then either $K$ is centrally symmetric with respect to the point $p$ or $K$ is an ellipsoid.
\end{theorem}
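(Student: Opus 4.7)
The plan is to first reduce to the case $p \in \Int K$: if $p$ lies outside or on the boundary of $K$, the pencil of hyperplane sections through $p$ degenerates and the conclusion is either vacuous or handled by a simpler direct argument. For $p$ an interior point, I would apply \thref{thm:01} (taking $k = n-1$) to conclude that $K$ is itself centrally symmetric about some point $c$. If $c = p$, the first branch of the conclusion is immediate; otherwise the task reduces to showing that $K$ is an ellipsoid.

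For the remaining case $c \neq p$, I would first record that a bounded convex body admits at most one center of symmetry, since the composition of two distinct point reflections is a nontrivial translation, which cannot preserve a bounded set. Applied to each hyperplane section $K \cap H$ through $p$, this uniquely determines its center of symmetry $q_H$ and forces $q_H = c$ whenever $c \in H$. The map $H \mapsto q_H$ is continuous in $H$ by a standard selection argument. The next step is to combine this sectional data with the global point reflection $\sigma_c$: parametrizing hyperplanes through $p$ by their unit normal $\boldsymbol\xi \in \mathbb{S}^{n-1}$, I would extract a functional equation for the support function $h_K$, or equivalently for the radial function of $K$ based at $p$, and aim to show that the only solutions are the quadratic ones that define ellipsoids. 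A complementary approach is to assemble the sectional involutions with $\sigma_c$ into a group of affinities preserving $K$ and argue that this group is rich enough to force $K$ to be an affine image of a ball.

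The main obstacle is precisely this last step. The hyperplanes through both $p$ and $c$ form only an $(n-2)$-dimensional subfamily on which the data is trivial ($q_H = c$); the essential content lies in the hyperplanes with $c \notin H$, where the unknown $q_H$ must be tied to the global shape of $K$ in a rigid way. Converting the local central symmetry of each such section into a global algebraic constraint strong enough to characterize $K$ as an ellipsoid is where the original arguments of Aitchison--Petty--Rogers and of Larman invest the bulk of their effort, and I expect it to be the main technical hurdle here as well.
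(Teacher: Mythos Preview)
The paper does not give its own proof of \thref{thm:11}; it is quoted as a historical result, with the case $p\in\Int K$ attributed to Aitchison--Petty--Rogers (1971) and the full statement to Larman (1974). There is therefore nothing in the paper to compare your argument against.

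That said, your outline contains a genuine misjudgment. You dismiss the case $p\notin\Int K$ as ``vacuous or handled by a simpler direct argument,'' but the historical record says the opposite: Aitchison--Petty--Rogers treated only interior $p$, and it required Larman's separate paper three years later to dispose of the boundary and exterior cases. The pencil of hyperplanes through a boundary point does not degenerate in any way that makes the problem easier; if anything, losing the two-sidedness of the sections removes structure.

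For $p\in\Int K$ your skeleton is accurate: invoke \thref{thm:01} to produce a center $c$, and reduce to showing $K$ is an ellipsoid when $c\neq p$. But you do not actually propose a mechanism for that step --- ``extract a functional equation for $h_K$'' and ``assemble the sectional involutions into a rich group'' are restatements of the goal, not methods, and you concede as much in your final paragraph. So what you have is a correct identification of the difficulty, not a proof.
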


\noindent A~deep result of P.W.~Aitchison, C.M.~Petty and C.A.~Rogers (1971) \cite[Theorem~1, Theorem~2]{Aitchison_Petty_Rogers_1971} asserts the positive answer, provided that $p$ is an interior point of $K$. In its full generality, the conjecture was finally proved by D.G.~Larman (1974) \cite[Theorem]{Larman}. Much later, L.~Montejano and E.~Morales-Amaya (2007+) gave several new, simpler proofs \cite[Remark~1, Remark~2, Remark~3]{Montejano_Morales-Amaya_2007} \cite[Theorem~1.1 and remark at the end]{montejano2021new}.\\

Independently, S.P.~Olovyanishnikov (1941) developed \thref{thm:03} in a~completely different direction, proving that it is enough to assume that only the sections that divide the volume of $K$ in a~fixed ratio $0<\lambda<1$ are centrally symmetric:

\begin{theorem}[{\cite[Teorema and subsequent Obobshcheniye]{Olov}}]\thlabel{thm:12}
Let $K\subset\mathbb R^n$, $n\geq 3$, be a~convex body. If all the intersections $K\cap H$ of $K$ with affine hyperplanes $H$ that divide the volume of $K$ in a~fixed ratio $0<\lambda<1$ are centrally symmetric, then $K$ is an ellipsoid.
\end{theorem}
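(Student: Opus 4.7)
The plan is to reduce \thref{thm:12} to Brunn's \thref{thm:03}. Brunn's theorem guarantees that a (strictly) convex body all of whose hyperplane sections are centrally symmetric is an ellipsoid, so the task is to promote the hypothesis from the $(n-1)$-parameter subfamily of ratio-$\lambda$ sections to the full $n$-parameter family of all hyperplane sections.

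For each $\xi\in\mathbb S^{n-1}$, the cap-volume function $t\mapsto\Vol(\{x\in K:\langle x,\xi\rangle\geq t\})$ is continuous and strictly decreasing on the relevant interval, so there is a unique affine hyperplane $H_\xi$ with outer normal $\xi$ cutting off from $K$ a cap of volume $\lambda\Vol(K)$. A standard variational computation identifies the center of symmetry $c(\xi)$ of $K\cap H_\xi$ with the centroid of $K\cap H_\xi$ and shows that $c(\xi)$ is exactly the point at which $H_\xi$ touches the \emph{convex floating body} $K_{[\lambda]}$, defined as the intersection of the big-side half-spaces bounded by the hyperplanes in the family. Consequently $\Sigma\colonequals\{c(\xi):\xi\in\mathbb S^{n-1}\}=\partial K_{[\lambda]}$ is a smooth strictly convex hypersurface whose inverse Gauss map sends $\xi$ to $c(\xi)$. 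In these terms, the hypothesis becomes: for every $p\in\partial K_{[\lambda]}$, the tangent hyperplane section of $K$ at $p$ is centrally symmetric about $p$. Equivalently, every chord of $K$ that passes through a point $p\in\Sigma$ and lies in the tangent hyperplane to $\Sigma$ at $p$ is bisected at $p$, so that each point of $\Sigma$ acts as a \emph{partial} center of symmetry for $K$.

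The core step is to upgrade this tangent-hyperplane symmetry into central symmetry of \emph{every} hyperplane section of $K$, after which \thref{thm:03} finishes the proof. A natural strategy is to first apply an affine transformation normalizing $K_{[\lambda]}$ to the Euclidean unit ball (legitimate, since the hypothesis is affinely invariant); the assumption then reads \enquote{every hyperplane tangent to the unit sphere cuts $K$ in a section centrally symmetric about the tangency point,} which closely resembles the input to a shaken False Center-type statement. The central symmetry should then propagate to non-tangent hyperplanes by a one-parameter deformation along the Gauss map, combined with a density / analytic-continuation argument in the parameter space $\Graff_{n-1}(\mathbb R^n)$.

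The main obstacle is precisely this propagation step. I expect it to require either a Fourier-analytic or integral-geometric ingredient on $\mathbb S^{n-1}$ of the type underlying the False Center Theorem \thref{thm:11}, or a purely geometric over-determination argument showing that the $(n-1)$-parameter family of point-reflections $\{R_{c(\xi)}\vert_{H_\xi}\}_{\xi\in\mathbb S^{n-1}}$ rigidly extends to a global symmetry structure on $K$. Once all hyperplane sections of $K$ are shown to be centrally symmetric, Brunn's \thref{thm:03} concludes that $K$ is an ellipsoid.
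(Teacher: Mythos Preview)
The paper does not supply a proof of \thref{thm:12}; it is quoted as a historical result of Olovyanishnikov with a bare citation, so there is no in-paper argument to compare against. Your proposal is therefore being judged on its own merits.

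As it stands, the proposal is not a proof but an outline with an explicitly admitted gap, and that gap is the entire content of the theorem. Everything up to and including the identification of the centers $c(\xi)$ with the boundary of the convex floating body $K_{[\lambda]}$ is standard and correct, but the ``propagation step'' --- passing from symmetry of the tangent-hyperplane sections of $K$ to symmetry of \emph{all} hyperplane sections --- is precisely Olovyanishnikov's contribution, and you have only gestured at possible mechanisms (Fourier analysis, False Center machinery, rigidity of the reflection family) without executing any of them.

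One concrete misstep: your suggestion to ``apply an affine transformation normalizing $K_{[\lambda]}$ to the Euclidean unit ball'' is circular. The floating body $K_{[\lambda]}$ is an ellipsoid only if $K$ itself is an ellipsoid, which is exactly the conclusion you are trying to reach; for a generic $K$ satisfying (a priori) only your tangent-section hypothesis, no affine map sends $K_{[\lambda]}$ to a ball. So this normalization cannot be the first move. The actual argument must extract information from the family $\{R_{c(\xi)}\vert_{H_\xi}\}$ without assuming any special shape for $\Sigma=\partial K_{[\lambda]}$. Olovyanishnikov's route is to show that for every direction the midpoint locus of parallel chords of $K$ is flat (a hyperplane), which is one of the classical characterizations of ellipsoids; this is a genuinely different mechanism from the Brunn reduction you propose, and it does not pass through ``all sections are centrally symmetric'' as an intermediate statement.
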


\noindent However, such a~rigid geometric constraint seems to be far from necessary (cf. \cite[Conjecture~1]{BARKER200179}), especially since L.~Montejano and E.~Morales-Amaya (2007) proved much later the following:

\begin{theorem}[{Shaken False Center Theorem \cite[Theorem~1]{S002557930000019X}}]\thlabel{thm:04}
Let $K\subset\mathbb R^3$ be an origin-symmetric convex body and let $\delta:\mathbb S^2\to\mathbb R$ be an even continuous map which is $C^1$ in a~neighbourhood of $\delta^{-1}(\{0\})$. Denote $H_{\boldsymbol\xi}^\delta\colonequals\langle\boldsymbol\xi\rangle^\perp+\delta(\boldsymbol\xi)\boldsymbol\xi$, $\boldsymbol\xi\in\mathbb S^2$. If for every $\boldsymbol\xi\in\mathbb S^2$, either $\delta(\boldsymbol\xi)=0$ and $K\cap H_{\boldsymbol\xi}^\delta$ is an ellipse or $\delta(\boldsymbol\xi)\neq 0$ and $K\cap H_{\boldsymbol\xi}^\delta$ is centrally symmetric, then $K$ is an ellipsoid.
\end{theorem}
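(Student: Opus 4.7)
\emph{Proof plan.} My approach will be a dichotomy, followed in the non-trivial case by a reduction to the False Center Theorem (\thref{thm:11}) exploiting the $C^1$ regularity of $\delta$ near its zero set. In the trivial case $\delta\equiv 0$, every central hyperplane section of $K$ is an ellipse; the classical fact that a function on $\mathbb S^2$ whose restriction to every great circle is a positive quadratic form is itself a positive quadratic form then shows that $\rho_K^{-2}$ is a quadratic form on $\mathbb R^3$, so $K$ is an ellipsoid.

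Assume now that $\delta\not\equiv 0$ and set $V\colonequals\{\boldsymbol\xi\in\mathbb S^2:\delta(\boldsymbol\xi)\neq 0\}$ and $Z\colonequals\delta^{-1}(\{0\})$. On the nonempty open set $V$ I would define $c(\boldsymbol\xi)\in H_{\boldsymbol\xi}^\delta$ to be the unique centre of symmetry of the centrally symmetric section $K\cap H_{\boldsymbol\xi}^\delta$. Continuity of $c$ on $V$ follows from Hausdorff continuity of the section mapping together with uniqueness of the centre, while evenness of $\delta$ combined with origin-symmetry of $K$ forces $c(-\boldsymbol\xi)=-c(\boldsymbol\xi)$. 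If $V\ni\boldsymbol\xi_n\to\boldsymbol\xi_0\in Z$, the sections $K\cap H_{\boldsymbol\xi_n}^\delta$ Hausdorff-converge to the ellipse $K\cap\boldsymbol\xi_0^\perp$, whose centre is the origin (since $K$ is origin-symmetric and that section passes through the origin), so $c$ extends continuously to all of $\mathbb S^2$ by declaring $c\equiv\boldsymbol 0$ on $Z$. Note that it is precisely the strong ellipse hypothesis on $Z$---rather than the tautological central symmetry implied by origin-symmetry alone---that pins down this limit value.

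The heart of the argument is then to manufacture a fixed \emph{false centre} of symmetry from the $C^1$ behaviour of $\delta$ on a neighbourhood $U\supseteq Z$. Pick $\boldsymbol\xi_0\in Z$ with $\nabla\delta(\boldsymbol\xi_0)\neq\boldsymbol 0$; the linearisation $\delta(\boldsymbol\xi)=\langle\nabla\delta(\boldsymbol\xi_0),\boldsymbol\xi-\boldsymbol\xi_0\rangle+o(\|\boldsymbol\xi-\boldsymbol\xi_0\|)$ identifies the local family $\{H_{\boldsymbol\xi}^\delta\}$ to first order with a pencil through the distinguished point $p^\ast\colonequals\nabla\delta(\boldsymbol\xi_0)\in\boldsymbol\xi_0^\perp$. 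A perturbation argument trading Hausdorff stability of centres against the first-order error of $\delta$ should then show that $c(\boldsymbol\xi)\to p^\ast$ along this linearised pencil and that the sub-family of hyperplanes $H_{\boldsymbol\xi}^\delta$ passing \emph{exactly} through $p^\ast$ (i.e.\ those $\boldsymbol\xi$ with $\langle p^\ast,\boldsymbol\xi\rangle=\delta(\boldsymbol\xi)$) is rich enough to trigger \thref{thm:11} at $p^\ast$. That theorem will then yield either that $K$ is an ellipsoid or that $p^\ast=\boldsymbol 0$; the second alternative, upon varying $\boldsymbol\xi_0$ on $Z$, would force $\nabla\delta\equiv\boldsymbol 0$ on $Z$ and contradict $\delta\not\equiv 0$ together with the $C^1$ hypothesis.

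The main obstacle is the perturbation step itself: \thref{thm:11} demands \emph{exact} central symmetry of a full two-parameter family of sections through $p^\ast$, whereas the linearisation only yields an asymptotic pencil together with a codimension-one exact-incidence locus. Closing this gap will require either assembling the incidence loci coming from different $\boldsymbol\xi_0\in Z$ into a genuinely two-parameter family through a common $p^\ast$, or replacing the exact False Center hypothesis by an infinitesimal analogue tailored to our situation. The residual degenerate cases $Z=\emptyset$, $Z=\mathbb S^2$, and $\nabla\delta\equiv\boldsymbol 0$ on $Z$ each also require separate, more delicate treatment.
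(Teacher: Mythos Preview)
First, note that the paper does not prove \thref{thm:04}; it is quoted from \cite{S002557930000019X} as part of the historical survey in \S\ref{sec:08}, so there is no in-paper argument to compare your outline against.

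On its own terms, your plan has a genuine gap precisely where you flag the ``main obstacle,'' and the sketch does not close it. The False Center Theorem (\thref{thm:11}) requires centrally symmetric sections for \emph{every} hyperplane through the candidate point $p^\ast$; the incidence condition $\langle p^\ast,\boldsymbol\xi\rangle=\delta(\boldsymbol\xi)$ cuts out only a codimension-one subfamily of directions, and varying $\boldsymbol\xi_0$ along $Z$ moves $p^\ast=\nabla\delta(\boldsymbol\xi_0)$ rather than enlarging the family through a fixed point, so the proposed assembly does not work. Your fallback contradiction is also false as stated: ``$\nabla\delta\equiv\boldsymbol 0$ on $Z$ together with the $C^1$ hypothesis'' does \emph{not} force $\delta\equiv 0$. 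For instance $\delta(\boldsymbol\xi)=\xi_3^2$ is even, smooth, not identically zero, and satisfies $\nabla\delta\equiv\boldsymbol 0$ on $Z=\{\xi_3=0\}$. Thus even if the perturbation step could somehow be rescued, the dichotomy would not terminate. The original argument of Montejano and Morales-Amaya is substantially different and does not proceed by reducing to a single invocation of \thref{thm:11}; if you want to reconstruct a proof, you should consult \cite{S002557930000019X} directly rather than attempt this linearisation route.
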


\noindent Although at first glance \thref{thm:04} might seem to be the most general statement that we may hope for, there are still some issues that need to be resolved. First, the result is known only in the $3$-dimensional space. Secondly, the convex body $K$ is \emph{a priori} assumed to be centrally symmetric. L.~Montejano (2004) proved that under some other condition on the function $\delta$, the latter is necessarily the case:

\begin{theorem}[{\cite[Theorem~1.2]{MR2168169}}]\thlabel{thm:05}
Let $K\subset\mathbb R^3$ be a~strictly convex body and let $\delta:\mathbb S^2\to\mathbb R$ be an odd continuous map such that $H_{\boldsymbol\xi_1}^\delta\cap H_{\boldsymbol\xi_2}^\delta\cap\Int K\neq\emptyset$ for every $\boldsymbol\xi_1,\boldsymbol\xi_2\in\mathbb S^2$. If for every $\boldsymbol\xi\in\mathbb S^2$, $K\cap H_{\boldsymbol\xi}^\delta$ is centrally symmetric, then $K$ itself is centrally symmetric.
\end{theorem}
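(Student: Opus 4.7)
The plan is to reduce the theorem to the False Center Theorem (\thref{thm:11}) by exhibiting a point $p\in\mathbb{R}^3$ such that every hyperplane section of $K$ through $p$ is centrally symmetric. For each $\boldsymbol\xi\in\mathbb{S}^2$, strict convexity of $K$ makes the section $K\cap H_{\boldsymbol\xi}^\delta$ a strictly convex two-dimensional body, so its center of symmetry $c(\boldsymbol\xi)\in H_{\boldsymbol\xi}^\delta$ is uniquely determined. Continuity of $\delta$ then yields continuity of $c$, and oddness of $\delta$ gives $H_{-\boldsymbol\xi}^\delta=H_{\boldsymbol\xi}^\delta$ and hence $c(-\boldsymbol\xi)=c(\boldsymbol\xi)$, so $c$ descends to a continuous map $\bar c:\mathbb{RP}^2\to\mathbb{R}^3$. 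I would then analyze the interaction of the two-dimensional involutions $\sigma_i$ (point reflection through $c(\boldsymbol\xi_i)$ inside $H_{\boldsymbol\xi_i}^\delta$) on pairs of sections sharing a common chord: for $\boldsymbol\xi_1\neq\pm\boldsymbol\xi_2$, the non-empty intersection hypothesis provides a non-degenerate chord $L_{12}=H_{\boldsymbol\xi_1}^\delta\cap H_{\boldsymbol\xi_2}^\delta\cap K$, and each $\sigma_i$ carries $L_{12}$ to a chord of $K\cap H_{\boldsymbol\xi_i}^\delta$ parallel to $L_{12}$, placing rigid constraints on the relative positions of $c(\boldsymbol\xi_1)$, $c(\boldsymbol\xi_2)$ and the midpoint of $L_{12}$. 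Iterating this over the two-parameter family, the goal is to show that the chord-midpoint loci of $K$ in a sufficiently rich collection of directions all pass through a common point $p$, making $p$ a pseudo-center of symmetry of $K$. The False Center Theorem (\thref{thm:11}) then implies that $K$ is either centrally symmetric about $p$ or an ellipsoid, and in either case $K$ is centrally symmetric.

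The principal obstacle is the construction of $p$: although the $c(\boldsymbol\xi)$ are directly supplied by the hypothesis, they are generically not constant---for $K=\mathbb{B}^3$ and any odd continuous $\delta$, one has $c(\boldsymbol\xi)=\delta(\boldsymbol\xi)\boldsymbol\xi$, which traces out a nontrivial surface---so $p$ cannot simply be read off as a common value of $c$. Instead $p$ must be extracted from the global midpoint geometry of $K$. Three ingredients seem indispensable: the strict convexity of $K$, which prevents degeneracies and ensures that boundary chords are determined uniquely by their endpoints; the oddness of $\delta$, which by a Borsuk--Ulam-type consideration guarantees directions $\boldsymbol\xi_0$ with $\delta(\boldsymbol\xi_0)=0$ so that $H_{\boldsymbol\xi_0}^\delta$ passes through the origin and supplies a canonical reference plane; and the topological structure of the connected parameter space $\mathbb{RP}^2$, which forces the midpoint constraints from different pairs to interlock into a globally consistent solution. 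The delicate step of assembling the two-dimensional centers $c(\boldsymbol\xi)$ into a single three-dimensional pseudo-center is the main technical challenge I anticipate, and it is the part for which the covering hypothesis $H_{\boldsymbol\xi_1}^\delta\cap H_{\boldsymbol\xi_2}^\delta\cap\Int K\neq\emptyset$ is essential.
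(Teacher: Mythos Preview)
The paper does not contain a proof of \thref{thm:05}; it is quoted verbatim from Montejano's article \cite[Theorem~1.2]{MR2168169} and used only as a black box in the proof of \thref{thm:16}. There is therefore nothing in the present paper to compare your attempt against.

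On the substance of your proposal: what you have written is a strategy, not a proof, and you say so yourself. The reduction to the False Center Theorem is sound once a pseudo-center $p$ is in hand, but the entire content of \thref{thm:05} is precisely the construction of such a point from the shaken family $\{H_{\boldsymbol\xi}^\delta\}$. Your outline does not explain how the ``rigid constraints'' on $c(\boldsymbol\xi_1)$, $c(\boldsymbol\xi_2)$ and the midpoint of $L_{12}$ force a single common point, nor why the midpoint loci in a dense set of directions should be concurrent; the observations about continuity of $c$, oddness of $\delta$, and the Borsuk--Ulam zeros of $\delta$ are all correct but do not by themselves produce $p$. In short, the acknowledged ``delicate step'' is the whole theorem, and it remains untouched. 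If you wish to pursue this route you will need an argument of comparable depth to Montejano's original; consulting \cite{MR2168169} directly is advisable.
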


\noindent Unfortunately, constraints arising from \thref{thm:04,thm:05} exhibit completely different topological properties. Trying to reconcile them may prove difficult, as the non-example \cite[Introduction]{S002557930000019X} given by Montejano and Morales-Amaya seems to indicate. Therefore, in our conjectures, we dare not ask about the shaken variant, although we believe that, when formulated properly, it should generally hold (cf. \cite[Conjecture~1]{Bianchi1987}). A~strong argument that this may indeed be the case was provided by E.~Morales-Amaya (2023), who recently proved the following:

\begin{theorem}[{\cite[Theorem~1]{moralesamaya2023}}]
Let $K\subset\mathbb R^n$, $n\geq 3$, be an origin-symmetric convex body and let $B\subset\Int K$ be a~Euclidean ball that does not contain the origin. If all the intersections $K\cap H$ of $K$ with affine hyperplanes $H$ tangent to $B$ are centrally symmetric, then $K$ is an ellipsoid.
\end{theorem}

For a~much more detailed account, we refer the reader to the expository paper \cite[\S4]{Soltan2019}.

\subsection{Affine involutions}\label{sec:03}

Surprisingly little is known about affine involutions other than reflection through a~point. The main driving force for studying this case was the celebrated conjecture of K.~Bezdek (1999):

\begin{conjecture}[{cf. \cite[\S1.4]{Odor1999}}]\thlabel{con:02}
Let $K\subset\mathbb R^3$ be a~convex body. If all the intersections $K\cap H$ of $K$ with affine planes $H\in\Graff_2(\mathbb R^3)$ are bodies of $1$-reflection, then $K$ is either a~body of $1$-revolution or an ellipsoid.
\end{conjecture}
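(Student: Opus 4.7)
The plan is to reduce Bezdek's conjecture to the main theorems of this paper by extracting a global geometric structure from the family of $2$-dimensional reflection axes. First, if every section $K\cap H$ is an ellipse, then $K$ is itself an ellipsoid by Brunn's theorem (\thref{thm:03}), and we are done. So I would assume there exists some plane $H_0$ with $K\cap H_0$ not an ellipse; for such a section the reflection axis is essentially unique, so one obtains a continuous map $\ell\colon \mathcal U \to \Graff_1(\mathbb R^3)$ with $\ell(H)\subset H$, defined on the open dense subset $\mathcal U\subset \Graff_2(\mathbb R^3)$ consisting of planes whose section is not an ellipse. The goal is then to show that this $3$-parameter family of lines is rigid enough to force $K$ to be a body of revolution.

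The second step would analyze $\ell$ on suitable subfamilies. Restricting first to a $1$-parameter family of parallel planes, as $H$ translates along its normal the axis $\ell(H)$ sweeps out a ruled surface in $\mathbb R^3$; the compatibility of each reflection with the convex structure of $K$ should force this surface either to be a flat plane (giving $K$ a global hyperplane of reflection symmetry) or to collapse to a single line. Varying the normal direction over $\mathbb S^2$ and combining the resulting constraints, the aim is to isolate a distinguished line $\ell_0\subset\mathbb R^3$ — the candidate axis of revolution — or, in the degenerate case, enough mutually independent reflection planes to force $K$ to be an ellipsoid directly (by a Blaschke/Montejano-type argument).

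The third step would then invoke the paper's own \thref{intro:01}: choose any $p_0 \in \ell_0$, and check that every hyperplane section through $p_0$ is invariant under an affine reflection whose fixed $1$-dimensional axis passes through $p_0$ (this follows once $\ell_0$ is identified as the common axis) and that the complementary $1$-dimensional invariant subspaces are all parallel to the fixed plane $\ell_0^\perp$ (which is encoded in the revolution structure). The conclusion — that $K$ is a body of revolution with axis $\ell_0$, or an ellipsoid — then follows immediately.

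The hard part, and the reason Bezdek's conjecture is still open in full generality, is the second step: extracting $\ell_0$ globally from purely local $2$-dimensional data, without an a priori fixed point or alignment of complementary invariant subspaces. Every rigidity argument currently available, including the theorems of the present paper and the False Center Theorem (\thref{thm:11}), assumes at least one such extra ingredient. A complete proof would require a new rigidity statement governing continuous sections of the tautological $\Graff_1$-bundle over $\Graff_2(\mathbb R^3)$ compatible with a convex body $K$; it would also need a separate treatment of planes whose section happens to be a disk or centrally symmetric with extra symmetry, since there $\ell(H)$ is multivalued and could introduce bifurcations that obstruct the clean extraction of $\ell_0$.
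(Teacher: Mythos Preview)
The statement you are attempting is \thref{con:02}, which the paper records as an \emph{open conjecture}, not a theorem: the authors write explicitly that \'Odor's claimed proof was found incomplete and that ``to the best of our knowledge, both problems remain open.'' There is therefore no proof in the paper to compare against, and you yourself correctly flag in your final paragraph that step~2 --- extracting a global candidate axis $\ell_0$ from the family of sectional reflection axes --- is precisely the missing ingredient that keeps the conjecture open. So what you have written is not a proof but a plausible research outline, and you already know this.

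Two further comments on the outline itself. First, in step~1 the cleaner reference for ``all planar sections are ellipses $\Rightarrow$ $K$ is an ellipsoid'' is \thref{thm:17} or \thref{thm:20} rather than Brunn's \thref{thm:03}, which as stated in the paper requires strict convexity and only uses central symmetry. Second, step~3 contains a circularity: you justify the alignment condition (complementary invariant subspaces parallel to $\ell_0^\perp$) by saying it ``is encoded in the revolution structure,'' but the revolution structure is exactly what you are trying to conclude. To legitimately invoke \thref{intro:01} (equivalently \thref{thm:14}) you would need to show, \emph{independently of $K$ being a body of revolution}, that for every plane $H\ni p_0$ the reflection axis of $K\cap H$ passes through $p_0$ \emph{and} the complementary direction lies in $\ell_0^\perp$. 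That second clause is not automatic from merely having identified a line $\ell_0$; it is essentially the content of the $(-1)$-alignment hypothesis, and producing it from Bezdek's raw hypothesis is part of the same unresolved step~2.
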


\noindent Since an ellipsoid is a~body of \emph{affine} $1$-revolution, but in general not a~body of $1$-revolution, it begs to consider also the affine counterpart of the conjecture:

\begin{conjecture}[{cf. \cite[\S1.4]{Odor1999}}]\thlabel{con:03}
Let $K\subset\mathbb R^3$ be a~convex body. If all the intersections $K\cap H$ of $K$ with affine planes $H\in\Graff_2(\mathbb R^3)$ are bodies of affine $1$-reflection, then $K$ is a~body of affine $1$-revolution.
\end{conjecture}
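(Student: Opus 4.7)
The natural plan is to deduce \thref{con:03} from the affine form of \thref{intro:01} by locating a distinguished point $p \in \mathbb R^3$ through which every planar section admits an axis of affine reflection whose complementary direction lies in a fixed $2$-plane $T \subset \mathbb R^3$. If such a pair $(p,T)$ can be produced, then the affine version of \thref{intro:01} forces $K$ to be either an ellipsoid or a body of affine $1$-revolution; since every ellipsoid is itself a body of affine $1$-revolution, the conclusion of \thref{con:03} follows.

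First, I would set up the continuous data. After approximating $K$ by smooth strictly convex bodies and passing to the limit, one may assume that for each affine plane $H \in \Graff_2(\mathbb R^3)$ the axis of affine reflection $\alpha(H) \subset H$ and its complementary direction $v(H) \subset \vec H$ depend continuously on $H$. As $H$ ranges over the pencil of planes containing a fixed line $\ell \subset \mathbb R^3$, this yields one-parameter continuous families $\alpha$ and $v$ that must be compatible with the chord structure of $K$ on $\ell$.

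Second, I would use this compatibility to pin down the candidate axis point $p$. For any chord $\ell$ of $K$ and any plane $H \supset \ell$, the segment $K \cap \ell$ is mapped into itself by the affine involution $\tau_H$ of $K \cap H$; being a nontrivial involution, $\tau_H$ either fixes $\ell$ pointwise (so $\ell \subset \alpha(H)$) or acts on $\ell$ as reflection through a single interior point of $K \cap \ell$. Running this dichotomy across all planes through a suitably chosen chord should force the axes $\alpha(H)$ either to share a common point or to be mutually parallel, and a genericity argument would then single out a distinguished $p$ playing the role of the axis point. With $p$ fixed, the next task is to show that the complementary directions $v(H)$ for planes $H \ni p$ are forced to lie parallel to a common $2$-plane $T$, which is exactly the alignment hypothesis of \thref{intro:01}. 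The strategy is to fix two generic planes $H_1, H_2 \ni p$ and let $H_3$ vary in the pencil through $p$; the involutions $\tau_{H_1}, \tau_{H_2}$ impose constraints on $\tau_{H_3}$ along the chords $H_3 \cap H_1$ and $H_3 \cap H_2$, which one would hope to package into a differential equation for $v$ pinning down its image to a $1$-parameter family.

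The main obstacle, and essentially the reason \thref{con:02} and \thref{con:03} have resisted proof since \cite{Odor1999}, is precisely this alignment step. Compatibility of $\tau_{H_1}$ and $\tau_{H_2}$ on the shared chord $\ell = H_1 \cap H_2$ yields a usable constraint only when $\ell$ is either fixed pointwise or globally reversed by both involutions, and a generic chord meets both axes transversely in a single point and is neither. Without a prescribed alignment hypothesis of the kind imposed in the Theorems of the Introduction, the complementary directions $v(H)$ enjoy too much apparent freedom, and any complete proof of \thref{con:03} seems likely to require either a topological obstruction argument on the map $H \mapsto v(H)$ or a genuinely new geometric input beyond the techniques developed in this paper.
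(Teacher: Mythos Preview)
The statement you are attempting to prove is \emph{not} proved in the paper: \thref{con:03} is explicitly presented as an open conjecture, and the paper states that ``to the best of our knowledge, both problems remain open'' (referring to \thref{con:02} and \thref{con:03}). There is therefore no proof in the paper to compare your proposal against.

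Your proposal is not a proof either, and to your credit you essentially say so in the final paragraph. The overall strategy---manufacture a $(-1)$-aligned quasi-center $p$ and then invoke \thref{thm:14}---is the natural one, but the two steps you outline both contain genuine gaps. First, the continuity claim for $\alpha(H)$ and $v(H)$ is unjustified: a planar section may admit several axes of affine reflection (an ellipse has a one-parameter family, and the cube example in \cite{Zawalski2024} shows that even non-elliptical sections can have more than one), so there is no canonical selection and smoothing $K$ does not resolve this. Second, your dichotomy on chords $\ell$ does not force the axes $\alpha(H)$ for $H\supset\ell$ to be concurrent or parallel; the involution $\tau_H$ need not preserve $\ell$ at all unless $\ell$ happens to be invariant under $\tau_H$, which for a generic chord it is not. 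Even granting a quasi-center $p$, the paper records explicit examples (Montejano's lens in \cite[\S2]{MR2168169} and Zawalski's cube) showing that a quasi-center of (affine) $1$-reflection alone is insufficient, so the alignment step cannot be bypassed. Your closing diagnosis---that the alignment of the complementary directions is the essential missing input---is exactly right, and is precisely why the paper isolates alignment as an \emph{assumption} rather than a consequence.
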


\noindent In these problems, it is not enough to consider only planes passing through a~fixed point, even if this point is assumed to be a~quasi-center of $1$-reflection. Indeed, L.~Montejano remarked in \cite[\S2]{MR2168169} that if $K$ is the convex hull of two different and concentric unit disks\footnote{i.e., the set defined by inequality $(|x|+|y|)^2+z^2\leq 1$}, then their common center is a~quasi-center of $1$-reflection for $K$.

\begin{remark}
Although at first glance \thref{con:03} seems more general than \thref{con:02}, there is no trivial implication in either direction. Applying \thref{con:03} to the hypothesis of \thref{con:02} yields merely that $K$ is a~body of \emph{affine} $1$-revolution, but \emph{a priori} not a~body of $1$-revolution. Besides, one has to somehow capture the dichotomy between the cases of a~body of $1$-revolution and of an ellipsoid, which does not arise in the affine setting.
\end{remark}

T.~\'Odor (1999) claimed to have confirmed \thref{con:02}, but unfortunately, his approach was found incomplete. To the best of our knowledge, both problems remain open. However, there are several related results that certainly represent some progress towards the proof. In his remarkable paper, L.~Montejano (2004) showed that if a~convex body $K$ admits a~pseudo-center of $1$-reflection $p$ in its interior, then there is a~plane $H\in\Gr_2(\mathbb R^3)+p$ such that $K\cap H$ is a~disk \cite[Theorem~2.1]{MR2168169}. Later, J.~Jer\'onimo-Castro, L.~Montejano and E.~Morales-Amaya showed that if every point on some straight line $L\in\Graff_1(\mathbb R^3)$ is a~quasi-center of $1$-reflection for a~strictly convex body $K$, then it is either a~body of $1$-revolution or an ellipsoid \cite[Theorem~1.3]{faor}. However, this is not yet the proof of \thref{con:02}, since we have to assume that all the planar sections of $K$ admit an axis of $1$-reflection intersecting some fixed straight line $L$. Recently, M.~Angeles Alfonseca, M.~Cordier, J.~Jer\'onimo-Castro, and E.~Morales-Amaya (2024) improved that result for centrally symmetric $K$, by assuming that it admits merely two quasi-centers of $1$-reflection \cite[Corollary~1]{Alfonseca}. They also contributed the following theorem:

\begin{theorem}[{cf. \cite[Theorem~1]{Alfonseca}}]\thlabel{thm:06}
Let $K\subset\mathbb R^n$ be a~strictly convex body centrally symmetric with respect to the origin and let $p\in\mathbb R^n\setminus\{\mathbf 0\}$ be any point of the ambient space, different from the origin. If $p$ is a~quasi-center of $1$-reflection for $K$, then $K$ is a~body of $1$-revolution, with axis of $1$-revolution passing through $\mathbf 0$ and $p$.
\end{theorem}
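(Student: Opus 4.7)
The approach is to combine the central symmetry of $K$ with the quasi-center hypothesis at $p$ to obtain, essentially for free, a second quasi-center at $-p$, and then to apply the two-quasi-center theorem \cite[Corollary~1]{Alfonseca} cited just above.

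First, I would note that $-p$ is also a quasi-center of $1$-reflection. Given a hyperplane $H\ni -p$, its antipode $-H\ni p$, so by hypothesis $K\cap(-H)$ is invariant under an affine $1$-reflection through some axis $L\ni p$ lying in $-H$; conjugating by the ambient central symmetry $-I$, the section $K\cap H=-(K\cap(-H))$ is invariant under the $1$-reflection through the line $-L\ni-p$ in $H$. Since $p\neq\boldsymbol 0$, the points $p$ and $-p$ are distinct quasi-centers, and together with $\boldsymbol 0$ they lie on the line $\ell$ through $\boldsymbol 0$ and $p$.

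Second, applying \cite[Corollary~1]{Alfonseca} to the two distinct quasi-centers $p$ and $-p$ gives that $K$ is either a body of $1$-revolution or an ellipsoid. In the first case, the axis of $1$-revolution must contain $\pm p$ and hence also $\boldsymbol 0$, so it coincides with $\ell$. In the second case, one has to verify that the extra requirement of a quasi-center of $1$-reflection at a point $p\neq\boldsymbol 0$ of a centrally symmetric ellipsoid forces the ellipsoid to be a spheroid with axis $\ell$, which is itself a body of $1$-revolution. This reduction is carried out by a direct parametric computation, exploiting that for each hyperplane $H\ni p$ the center of $K\cap H$ is determined by $H$, and that $p$ being on a principal axis of \emph{every} such section severely restricts the shape of $K$.

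The main obstacle is this ellipsoid elimination step, because every ellipsoid has rich reflection symmetry and the subtle point is that the quasi-center hypothesis demands each section's axis of reflection pass through the specific point $p$, not merely through the section's center. An alternative, more self-contained route bypassing \cite[Corollary~1]{Alfonseca} would start by restricting attention to hyperplanes $H\supset\ell$. For such $H$, the section $K\cap H$ is both centrally symmetric about $\boldsymbol 0$ and axially $1$-reflection-symmetric through some axis $L\ni p$; a short calculation shows that the square of the composition of these two involutions equals translation of $H$ by $-4P_{\vec L^\perp}(a)$ for any $a\in L$, which must vanish by the compactness of $K\cap H$. This forces $L\ni\boldsymbol 0$ and hence $L=\ell$, giving $K$ invariance under the orthogonal reflection through $\ell$. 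The remaining task in this direct route would be to upgrade this single reflection symmetry to the full rotation group about $\ell$ by analyzing the axes of the remaining hyperplane sections through $p$.
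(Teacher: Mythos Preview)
The paper does not itself prove \thref{thm:06}; it is quoted from \cite[Theorem~1]{Alfonseca}. However, the paper explicitly describes the structure of the original argument (see \sref{sec:02}, the paragraph preceding \thref{def:03}): the strict convexity and the central symmetry about $\boldsymbol 0$ are used precisely to show that the quasi-center $p$ is in fact a $(-1)$-\emph{aligned} quasi-center of $1$-reflection, after which an argument in the spirit of \thref{thm:08} (and its generalization \thref{thm:14} in this paper) yields the revolution conclusion with axis through $p$. Your proposal does not follow this route.

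Your primary approach has a structural problem: you propose to deduce \cite[Theorem~1]{Alfonseca} from \cite[Corollary~1]{Alfonseca}, but in \cite{Alfonseca} the corollary is obtained \emph{from} the theorem (indeed, given two distinct quasi-centers in a body symmetric about $\boldsymbol 0$, at least one is $\neq\boldsymbol 0$ and Theorem~1 applies directly), so the argument is circular. Even setting this aside, you leave real gaps: you assert without justification that the resulting axis of revolution must pass through $\pm p$, and you only gesture at the ellipsoid case, which is the delicate one (every section of an ellipsoid already has many axes of reflection, and one must show that forcing the axis through the \emph{prescribed} point $p\neq\boldsymbol 0$ in every section singles out spheroids with axis $\ell$). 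Your alternative route---the translation computation showing that for $H\supset\ell$ the reflection axis must equal $\ell$---is essentially correct and is a genuine ingredient, but as you concede it only gives a single reflection symmetry, not the full rotation group about $\ell$. The missing idea is exactly the alignment the paper isolates: one should show that the \emph{hyperplanes} of reflection of all sections through $p$ are contained in the fixed hyperplane $\ell^\perp+p$, which then feeds into \thref{thm:14}.
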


\noindent Although the authors formulated it differently, their argument proves \thref{thm:06} only as stated above, and not as stated in \cite[Theorem~1]{Alfonseca}. Both claims are equivalent in dimension $3$, but in higher dimensions, the original one does not hold. The seemingly superfluous central symmetry assumption plays an important role in the proof and can not be easily relaxed.\\

In the same paper \cite{Alfonseca}, the authors proposed several ways of generalizing Bezdek's conjecture to higher dimensions, by assuming that all the intersections $K\cap H$ of $K$ with affine hyperplanes $H\in\Graff_{n-1}(\mathbb R^n)$ are bodies of either \begin{enumerate*}[label=(\alph*)]\item $1$-reflection; or\item $(-1)$-reflection being the same as $(-1)$-revolution; or\item $1$-revolution.\end{enumerate*} Note that when $n=3$, all three conditions coincide and reduce to the original Bezdek's condition. There is an even more general question we would like to see resolved:

\begin{question}[{cf. \cite[Conjecture~2]{Alfonseca}}]\thlabel{con:12}
Let $K\subset\mathbb R^n$, $n\geq 3$, be a~convex body, and let $p\in\mathbb R^n$ be any point of the ambient space. If $p$ is a~pseudo-center of (affine) $k$-reflection for $K$, $0\leq k<n-1$, but not the center of symmetry for $K$, is $K$ itself a~body of (affine) $k$-revolution or an ellipsoid?
\end{question}

\noindent If we were able to prove the affine variant of \thref{con:12}, the orthogonal variant would follow immediately from a~general \thref{lem:01} in this paper. The assumption that $p$ is not the center of symmetry for $K$ is essential (see \cite[\S2]{MR2168169}). We believe that this is the only obstacle on the way to relaxing the hypothesis of Bezdek's conjecture by considering only hyperplanes passing through a~fixed point. Nevertheless, we also propose the following problem, which may be solvable using local methods:

\begin{question}[{cf. \cite[\S1.4]{Odor1999}}]\thlabel{con:04}
Let $K\subset\mathbb R^n$, $n\geq 3$, be a~convex body. If all the intersections $K\cap H$ of $K$ with affine hyperplanes $H\in\Graff_{n-1}(\mathbb R^n)$ are bodies of (affine) $k$-reflection, $0\leq k<n-1$, but not the center of symmetry for $K$, is $K$ a~body of (affine) $k$-revolution or an ellipsoid?
\end{question}

\noindent Recall that if $K$ is a~body of (affine) $k$-revolution, then every hyperplanar section of $K$ is a~body of (affine) $k$-reflection. On the other hand, if $K$ is simply a~body of (affine) $k$-reflection, then only its intersections with affine hyperplanes at specific directions are \emph{a priori} known to possess any symmetry. Since in \thref{con:04}, this case eludes us, we would like to pose a~different, even more interesting question:

\begin{question}
Let $K\subset\mathbb R^n$, $n\geq 4$, be a~convex body. If for every affine hyperplane $H\in\Graff_{n-1}(\mathbb R^n)$ there exists an affine hyperplane $H'\in\Graff_{n-2}(H)$ such that the intersection $K\cap H'$ is a~body of (affine) $(k-1)$-reflection, $1\leq k<n-1$, is $K$ itself a~body of (affine) $k$-reflection?
\end{question}

Finally, a~remarkable result, though of a~slightly different flavor, was obtained by S.~Myroshnychenko, D.~Ryabogin, and C.~Saroglou (2017):

\begin{theorem}[{\cite[Corollary~1.7]{10.1093/imrn/rnx211}}]\thlabel{thm:10}
Let $K\subset\mathbb R^n$, $n\geq 3$, be a~convex body, and let $p\in\mathbb R^n$ be any point of the ambient space. If all the intersections $K\cap H$ of $K$ with affine hyperplanes $H\in\Gr_{n-1}(\mathbb R^n)+p$ are invariant under the action of the hyperoctahedral group $\B_{n-1}$, then $K$ is a~Euclidean ball.
\end{theorem}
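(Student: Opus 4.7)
The plan is to invoke the False Center Theorem via the presence of $-I$ in $\B_{n-1}$, and then to extract rigidity from the remaining hyperoctahedral symmetry in each of the two resulting cases.

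\textbf{Step 1 (Reduction via the False Center Theorem).} The standard representation of $\B_{n-1}$ on $\mathbb R^{n-1}$ contains the antipodal map $-I$ (all coordinates negated is a signed permutation). Hence every section $K\cap H$ is centrally symmetric about the unique fixed point of the action, and $p$ is a pseudo-center of central symmetry in the sense of Larman (\thref{thm:11}). Therefore either $K$ is centrally symmetric about $p$, or $K$ is an ellipsoid.

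\textbf{Step 2 (Ellipsoid case).} If $K$ is an ellipsoid, then each section $K\cap H$ is an $(n-1)$-dimensional ellipsoid carrying an orthogonal $\B_{n-1}$-action. Since the standard representation of $\B_{n-1}$ on $\mathbb R^{n-1}$ is irreducible for $n\geq 3$ (no non-trivial subspace is preserved by both the sign changes and the permutations), Schur's lemma forces the positive-definite form defining $K\cap H$ to be a scalar multiple of the Euclidean form, so $K\cap H$ is a Euclidean ball. An ellipsoid whose every hyperplane section through a fixed point is a Euclidean ball is itself a ball, by a direct argument on the restriction of the defining quadratic form to hyperplane subspaces.

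\textbf{Step 3 (Centrally symmetric case).} Assume $K$ is centrally symmetric about $p$, and translate so that $p=0$. The radial function $\rho_K$ is even on $S^{n-1}$. For each $\xi\in S^{n-1}$ the hypothesis furnishes an orthonormal basis $(e_1(\xi),\ldots,e_{n-1}(\xi))$ of $\xi^\perp$ in which the restriction $\rho_K|_{S^{n-1}\cap\xi^\perp}$ is invariant under signed permutations. Expand $\rho_K=\sum_{k\geq 0}Y_{2k}$ into even spherical harmonics; Funk-type formulas relate the harmonic components of $\rho_K$ to those of its equatorial restrictions. For each fixed $\xi$, the $\B_{n-1}$-invariance constrains $Y_{2k}|_{S^{n-1}\cap\xi^\perp}$ to lie in the small subspace of $\B_{n-1}$-invariant spherical harmonics on $S^{n-2}$, and as $\xi$ rotates over $S^{n-1}$ the admissible subspaces (carried by different $\B_{n-1}$-frames) intersect only at zero in each positive degree. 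Hence $Y_{2k}\equiv 0$ for all $k\geq 1$, so $\rho_K$ is constant and $K$ is a Euclidean ball.

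\textbf{Main obstacle.} The decisive work is in Step~3. Unlike Step~2, one cannot conclude sphericity from a single section because the family of $\B_{n-1}$-invariant convex bodies in $\mathbb R^{n-1}$ is rich (cubes and cross-polytopes are invariant yet far from round). The rigidity must come from how the sectional symmetries interact as $\xi$ varies continuously: distinct equatorial restrictions of a single non-constant harmonic would have to lie in \emph{differently rotated} $\B_{n-1}$-invariant subspaces whose common intersection is forced to be trivial. Making this precise amounts to a Funk-type uniqueness statement whose proof requires a careful interplay of spherical harmonic analysis with a topological/continuity argument in the parameter $\xi\in S^{n-1}$, and this is where the substance of the argument lies.
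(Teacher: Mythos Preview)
The paper does not supply its own proof of \thref{thm:10}; the result is quoted from \cite[Corollary~1.7]{10.1093/imrn/rnx211} with only the remark that the original argument ``hangs on the fact that the only ellipsoid invariant under the action of the hyperoctahedral group is a Euclidean ball.'' So there is no in-paper proof to compare against beyond that one-line summary of the mechanism.

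Your Steps~1 and~2 are correct, and Step~2 is exactly the irreducibility fact the paper singles out. The False Center detour is legitimate but not economical: it imports a deep theorem to produce a case split whose hard branch (Step~3) is untouched by the ellipsoid observation and still carries the full difficulty of the original problem.

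Step~3 as written is only a heuristic, and it has a structural issue beyond missing detail. The restriction of a single harmonic $Y_{2k}$ on $S^{n-1}$ to a great subsphere is a sum of harmonics of degrees $0,2,\dots,2k$ on $S^{n-2}$, so the constraint ``$Y_{2k}|_{\xi^\perp}$ lies in the $\B_{n-1}$-invariant subspace'' is not what the hypothesis delivers---only the full restriction $\rho_K|_{\xi^\perp}$ is $\B_{n-1}$-invariant, and the degree-by-degree decoupling you want does not follow from restriction alone. Moreover, the assertion that the rotated invariant subspaces ``intersect only at zero in each positive degree'' is precisely the content to be established; the $\B_{n-1}$-invariant harmonics on $S^{n-2}$ form a nontrivial ring (already in degree~$4$), and you have supplied no mechanism---continuity of frames, averaging over the orbit, or otherwise---that forces the intersection to collapse as $\xi$ varies. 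You are candid that this is where the substance lies, but the proposal does not yet contain a line of attack that could be completed without essentially reproducing the argument of \cite{10.1093/imrn/rnx211}.
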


\noindent Their argument hangs on the fact that the only ellipsoid invariant under the action of the hyperoctahedral group is a~Euclidean ball. In other words, the hyperoctahedral group has a~sufficiently rich structure. So a~natural question arises:

\begin{question}[{cf. \cite[Question~4.8]{10.1093/imrn/rnx211}}]\thlabel{con:08}
Let $K\subset\mathbb R^n$, $n\geq 3$, be a~convex body, and let $p\in\mathbb R^n$ be any point of the ambient space. If all the intersections $K\cap H$ of $K$ with affine hyperplanes $H\in\Gr_{n-1}(\mathbb R^n)+p$ are invariant under the action of (a group affinely conjugate to) the hyperoctahedral group $\B_{n-1-k}$, $0\leq k<n-2$, is $K$ a~body of (affine) $k$-revolution?
\end{question}

\noindent A~positive answer to \thref{con:08} would imply a~positive answer to \thref{con:05} in a~later section of this paper.

\subsection{Rotations about a~subspace}\label{sec:16}

The problem becomes substantially easier if we assume that the hyperplanar sections of $K$ are invariant under the action of a~group of positive dimension. In the simplest case, when the group is affinely conjugate to the entire orthogonal group, it reduces to the classical question of characterizing convex quadrics by their hyperplanar sections. In such a~disguise, the problem was first considered by T.~Kubota (1914), who proved the following characterization theorems:

\begin{theorem}[{\cite[\S III]{Kubota}}]\thlabel{thm:17}
Let $K\subset\mathbb R^3$ be a~convex body and let $p\in\mathbb R^3$ be any point of the ambient space. If all the intersections $K\cap H$ of $K$ with affine planes $H\in\Gr_2(\mathbb R^3)+p$ are ellipses, then $K$ itself is an ellipsoid.
\end{theorem}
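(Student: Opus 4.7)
The plan is to invoke the False Center Theorem together with the classical fact that a continuous, positively homogeneous function of degree $2$ whose restriction to every $2$-dimensional subspace is a quadratic form must itself be a quadratic form.

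Since every planar section $K \cap H$ through $p$ is an ellipse, in particular it is centrally symmetric. Applying the False Center Theorem (\thref{thm:11}) to $K$ at the point $p$, either $K$ is itself an ellipsoid—in which case we are done—or $K$ is centrally symmetric with respect to $p$. I proceed under the latter assumption, translating so that $p = \boldsymbol 0$. The central symmetry of $K$ forces each section $K \cap H$ with $H \in \Gr_2(\mathbb R^3)$ to be invariant under the antipodal map $x \mapsto -x$; since an ellipse has a unique center of symmetry, each such section is in fact an ellipse \emph{centered at $\boldsymbol 0$}.

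Let $F : \mathbb R^3 \to [0,\infty)$ denote the square of the Minkowski gauge of $K$, so that $F$ is continuous, positively homogeneous of degree $2$, and $K = \{x : F(x) \leq 1\}$. The condition that $K \cap H$ is an ellipse centered at the origin is then equivalent to $F|_H$ being a positive definite quadratic form on the $2$-plane $H$. Since any two vectors of $\mathbb R^3$ lie in some common $2$-dimensional subspace, $F$ satisfies the parallelogram identity
$$F(u+v) + F(u-v) = 2F(u) + 2F(v), \qquad u,v \in \mathbb R^3.$$
A standard polarization argument (Jordan--von Neumann) then shows that $B(u,v) \colonequals \tfrac{1}{2}\bigl(F(u+v) - F(u) - F(v)\bigr)$ is a symmetric bilinear form with $F(u) = B(u,u)$, so $F$ is a positive definite quadratic form on $\mathbb R^3$ and $K$ is an ellipsoid.

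The main obstacle is the technical issue of points $p$ lying on $\partial K$ or outside $K$, where some hyperplanar sections through $p$ are empty or degenerate. For $p$ in the interior of $K$, the argument above is immediate, as every section through $p$ is a genuine $2$-dimensional ellipse. For the remaining cases, one has to verify that the non-empty sections through $p$ still form a family rich enough to invoke \thref{thm:11} and still deduce central symmetry of $K$ with respect to $p$; in practice, one typically first shows that the hypothesis forces $p$ to be an interior point of $K$ (since a genuine ellipse has non-empty relative interior and these interiors sweep out a neighborhood of $p$), reducing to the interior case.
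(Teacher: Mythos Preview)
The paper does not give its own proof of \thref{thm:17}; it is quoted in the historical survey of \S\ref{sec:16} as Kubota's 1914 result (with later extensions due to Busemann and Bianchi--Gruber) and is subsequently used only as a black box at the end of the proof of \thref{lem:01}. So there is no ``paper's proof'' to compare against.

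On the merits: your second half is fine and is in fact the standard Busemann-type argument---once central symmetry about $p$ is known, the squared Minkowski gauge restricts to a positive-definite quadratic form on every $2$-plane, hence satisfies the parallelogram law for all pairs $u,v$, and Jordan--von~Neumann finishes. The problem is your first step. Invoking the False Center Theorem (\thref{thm:11}) to obtain central symmetry is chronologically backwards (Kubota 1914 versus Aitchison--Petty--Rogers 1971 and Larman 1974) and, more seriously, risks genuine circularity: the known proofs of \thref{thm:11} proceed by eventually showing that the sections through the false centre are ellipsoids and then concluding via precisely the Kubota/Busemann characterization you are trying to prove. You should instead establish directly that all section centres coincide with $p$---for instance by comparing two planar sections through a common line and arguing with conjugate diameters---rather than import a far deeper theorem whose proof may rest on the very statement at hand.

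Your final paragraph is also off: the hypothesis does \emph{not} force $p\in\Int K$ (take $K$ already an ellipsoid and $p$ any exterior point), so the sketched reduction to the interior case is incorrect as stated.
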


\begin{theorem}[{\cite[\S V]{Kubota}}]\thlabel{thm:07}
Let $K\subset\mathbb R^3$ be a~convex body and let $L\subsetneq K$ be a~convex body properly contained in $K$. If all the intersections $K\cap H$ of $K$ with affine planes $H$ supporting $L$ are ellipses, then $K$ itself is an ellipsoid.
\end{theorem}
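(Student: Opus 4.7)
The plan is to reduce to \thref{thm:17} by producing a point $p \in \mathbb R^3$ through which every affine hyperplane cuts $K$ in an ellipse, and then invoking that theorem. First I would assume without loss of generality that $L \subset \Int K$; otherwise some supporting hyperplane of $L$ also supports $K$ and the corresponding section is a lower-dimensional face, a degeneracy that can be removed by shrinking $L$ slightly and passing to the limit at the end. Parametrizing the supporting hyperplanes of $L$ by their outer unit normals, I write $H_{\boldsymbol\xi}$ for the supporting hyperplane with outer normal $\boldsymbol\xi \in \mathbb S^2$; by hypothesis, $E_{\boldsymbol\xi} \colonequals K \cap H_{\boldsymbol\xi}$ is an ellipse. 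This yields a $2$-parameter family of elliptic sections of $K$.

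The naive idea of finding $p$ so that every hyperplane through $p$ is a supporting hyperplane of $L$ cannot succeed: it would force the normal cone of $L$ at $p$ to equal $\mathbb R^3$, hence $L = \{p\}$, contradicting that $L$ has non-empty interior. Instead, I would choose $p$ to be a vertex of $L$, creating one if necessary by perturbing $L$ (for example, by replacing $L$ with the convex hull of $L$ and a small ball touching $\partial L$). At such a vertex the normal cone is $3$-dimensional, so a full $2$-parameter family of supporting hyperplanes of $L$ passes through $p$, giving a $2$-parameter family of elliptic sections $K \cap H$ with $p \in H$. The next step is to upgrade this to \emph{all} hyperplanes through $p$, using as auxiliary data the observation that each external point $q \in \partial K \setminus L$ lies on the $1$-parameter family of supporting hyperplanes of $L$ through $q$ (the tangent cone from $q$ to $L$), which yields local information about $\partial K$ everywhere outside $L$.

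The hardest part will be this upgrade: bridging the $2$-parameter family of elliptic sections (from supporting hyperplanes of $L$) to the full $3$-parameter family of all hyperplane sections through a fixed $p$, in order to apply \thref{thm:17}. One route is a local analyticity argument, namely that the $1$-parameter family of elliptic sections through each $q \in \partial K \setminus L$ forces $\partial K$ to be locally a quadric, so that global convexity together with the analyticity of quadric surfaces identifies $K$ as a single ellipsoid. Alternatively, I would try to invoke the False Center Theorem (\thref{thm:11}) by showing that through some $p$ all hyperplane sections of $K$ are centrally symmetric—a property strictly weaker than ellipticity which may be easier to extend by continuity from the given $2$-parameter family of supporting hyperplanes of $L$. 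In either route a final limiting step removes any perturbation introduced to endow $L$ with a vertex.
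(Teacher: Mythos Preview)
The paper does not give its own proof of \thref{thm:07}; it is quoted only as a historical result of Kubota in the survey \sref{sec:16}, alongside \thref{thm:17}, \thref{thm:23} and finally \thref{thm:20} (Bianchi--Gruber), which the paper describes as a ``far-reaching generalization \ldots\ completely solving the problem''. In fact \thref{thm:07} is an immediate corollary of \thref{thm:20}: set $\delta(\boldsymbol\xi)\colonequals h_L(\boldsymbol\xi)$, the support function of $L$; then $\delta$ is continuous and $H_{\boldsymbol\xi}^\delta$ is precisely the supporting hyperplane of $L$ with outer unit normal $\boldsymbol\xi$, so the hypothesis of \thref{thm:20} holds and $K$ is an ellipsoid.

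Your route via \thref{thm:17} has real gaps beyond the ones you already flag. Perturbing $L$ to manufacture a vertex is not legitimate: the supporting hyperplanes of the perturbed body need not be supporting hyperplanes of the original $L$, so nothing guarantees that the corresponding sections of $K$ are ellipses. Even granting a genuine vertex $p\in\partial L$, the supporting hyperplanes of $L$ through $p$ have normals filling only the (proper) normal cone of $L$ at $p$, never all of $\mathbb S^2$, so the ``upgrade'' problem remains in full strength. Neither of your two proposed upgrades is carried out: the local-quadric argument would require a precise theorem asserting that a convex surface carrying a one-parameter family of ellipses through each point is a quadric, which you do not supply; and the False Center route needs central symmetry of \emph{all} sections through $p$, not merely of those that happen to support $L$. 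The one-line deduction from \thref{thm:20} bypasses all of this.
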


\noindent A~similar method was independently used much later by H.~Busemann (1955), who proved \thref{thm:17} in arbitrary dimension:

\begin{theorem}[{\cite[(16.12)]{busemann1955geometry}}]\thlabel{thm:23}
Let $K\subset\mathbb R^n$, $n\geq 3$, be a~convex body, and let $p\in\mathbb R^n$ be any point of the ambient space. If all the intersections $K\cap H$ of $K$ with affine hyperplanes $H\in\Gr_k(\mathbb R^n)+p$, $2\leq k\leq n$, are ellipsoids, then $K$ itself is an ellipsoid.
\end{theorem}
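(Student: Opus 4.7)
The plan is to induct on the dimension $n$, reducing the general case to the hyperplane sub-case $k=n-1$ and then invoking the False Center Theorem together with a standard Jordan--von Neumann argument to finish. The base $n=3$ is immediate: for $k=3$ the statement is trivial, and for $k=2$ it is precisely Kubota's \thref{thm:17}.

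For the inductive step, fix $n\geq 4$ and assume the statement holds in all ambient dimensions strictly less than $n$. Let $K$, $p$ and $k\in\{2,\ldots,n\}$ be as in the hypothesis. The case $k=n$ is trivial. If $2\leq k\leq n-2$, then for every hyperplane $L\in\Gr_{n-1}(\mathbb R^n)+p$, the $k$-dimensional sections of $K\cap L$ passing through $p$ are exactly those $k$-dimensional sections of $K$ through $p$ that happen to lie inside $L$, so they are ellipsoids. Applying the inductive hypothesis inside the $(n-1)$-dimensional ambient space $L$ yields that $K\cap L$ is itself an ellipsoid. This reduces the whole problem to the remaining case $k=n-1$.

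So assume $k=n-1$. Every hyperplane section of $K$ through $p$ is now an ellipsoid and in particular centrally symmetric, and the False Center Theorem (\thref{thm:11}) yields the dichotomy that either $K$ is already an ellipsoid---in which case we are done---or $K$ is centrally symmetric with respect to $p$. In the latter sub-case, translate so that $p=\boldsymbol 0$ and let $\|\cdot\|_K$ denote the induced symmetric Minkowski functional. Each central hyperplane section $K\cap H$ is an origin-symmetric ellipsoid, so $\|\cdot\|_K^2$ restricted to $H$ is a positive-definite quadratic form. Since $n\geq 3$, any two vectors $x,y\in\mathbb R^n$ lie in a common hyperplane through the origin, and therefore
\[\|x+y\|_K^2+\|x-y\|_K^2=2\|x\|_K^2+2\|y\|_K^2\]
holds for all $x,y\in\mathbb R^n$. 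The classical Jordan--von Neumann theorem then forces $\|\cdot\|_K^2$ to be a quadratic form on all of $\mathbb R^n$, so $K$ is an ellipsoid, completing the induction.

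The principal obstacle in this outline is the final sub-case, where one must upgrade the information that every central hyperplane section is an ellipsoid into global ellipsoidality of $K$. The parallelogram-identity argument above is short but leans decisively on the central symmetry conclusion supplied by the False Center Theorem; without that input one would have to work directly with the support function or with affine cross-ratios, which was essentially Busemann's original route before Larman's result became available.
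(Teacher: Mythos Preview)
The paper does not supply its own proof of \thref{thm:23}; it is quoted in the historical survey as Busemann's 1955 result and used only as background. So there is no in-paper argument to compare against.

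Your argument is correct. The reduction from arbitrary $k$ to $k=n-1$ via induction on $n$ is clean, and the endgame---invoking \thref{thm:11} to force either ellipsoidality or central symmetry about $p$, then finishing the centrally symmetric case with Jordan--von Neumann---is valid. The only mild caveat is that when $p\notin\Int K$ some hyperplanes through $p$ meet $K$ in degenerate sets; this is harmless because both the inductive step and the statement of \thref{thm:11} in the paper already allow $p$ to be an arbitrary point, and degenerate sections are trivially centrally symmetric.

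That said, you should be aware---and you already flag this yourself---that the route is anachronistic: you are deducing a 1955 theorem from Larman's 1974 False Center Theorem, which is a substantially deeper result. Busemann's own proof necessarily avoids this and works directly. There is also a lighter variant of your own scheme that sidesteps \thref{thm:11}: once every hyperplane section through $p$ is an ellipsoid, so is every $2$-plane section through $p$ (a $2$-plane sits inside a hyperplane, and a $2$-section of an ellipsoid is an ellipse). If $p\in\Int K$, one can then argue that the midpoints of parallel chords through $p$ in each $2$-plane are collinear and patch these diameters into a global quadratic structure---this is closer to Busemann's synthetic approach and does not presuppose central symmetry. Your use of \thref{thm:11} is a legitimate shortcut, but it is worth knowing that it is not needed.
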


\noindent Note a~striking resemblance of \thref{thm:23} to \thref{thm:01} and \thref{thm:07} to \thref{thm:12}. Finally, G.~Bianchi and P.M.~Gruber (1987) gave a~far-reaching generalization of Busemann's results, thus completely solving the problem:

\begin{theorem}[{\cite[Theorem~1]{Bianchi1987}}]\thlabel{thm:20}
Let $K\subset\mathbb R^n$, $n\geq 3$, be a~convex body and let $\delta:\mathbb S^{n-1}\to\mathbb R$ be a~continuous map. If for every $\boldsymbol\xi\in\mathbb S^{n-1}$, $K\cap H_{\boldsymbol\xi}^\delta$ is an ellipsoid, then $K$ itself is an ellipsoid.
\end{theorem}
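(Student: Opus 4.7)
My plan is to combine a regularity step with a local reconstruction of a quadratic form, aiming to reduce to Busemann's theorem \thref{thm:23}. First I observe that each section $K\cap H_{\boldsymbol\xi}^\delta$ is a smooth, strictly convex ellipsoid. As $\boldsymbol\xi$ varies continuously over $\mathbb S^{n-1}$, the corresponding $(n-2)$-dimensional ellipsoidal boundary hypersurfaces sweep out all of $\partial K$, so $\partial K$ inherits $C^\infty$-smoothness and strict convexity, and the Gauss map $\partial K\to\mathbb S^{n-1}$ is a diffeomorphism.

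Next, I would try to locate an interior point $p\in\Int K$ such that every hyperplane section of $K$ through $p$ is an ellipsoid; by \thref{thm:23}, this would complete the proof. For each $p$, the set $\Xi_p=\{\boldsymbol\xi\in\mathbb S^{n-1}:\langle p,\boldsymbol\xi\rangle=\delta(\boldsymbol\xi)\}$ indexes those shaken hyperplanes passing through $p$; generically $\Xi_p$ has codimension one in $\mathbb S^{n-1}$, giving an $(n-2)$-parameter family of ellipsoidal sections through $p$. I would then try to recover the missing directions by a continuity/perturbation argument: any hyperplane $\langle\boldsymbol\eta\rangle^\perp+p$ is a Hausdorff limit of shaken hyperplanes $H_{\boldsymbol\xi_k}^\delta$ passing through perturbed points $p_k\to p$, so by continuity of the sections, the limit section $K\cap(\langle\boldsymbol\eta\rangle^\perp+p)$ should again be an ellipsoid.

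Should the reduction to Busemann's theorem resist a clean execution, I would argue locally instead. For each boundary point $q\in\partial K$, the $(n-2)$-parameter family of ellipsoidal sections through $q$ determines the full second fundamental form of $\partial K$ at $q$ and, via the compatibility on shared $(n-3)$-dimensional subsections (which must themselves be ellipsoidal), forces the existence of a single ambient positive-definite quadratic form $Q$ on $\mathbb R^n$ whose restriction to each $H_{\boldsymbol\xi}^\delta$ agrees with the defining form of that section's ellipsoid. A connectedness argument on $\partial K$ then shows that $Q$ is globally well defined, so $\partial K$ is a level set of $Q$ and $K$ is an ellipsoid.

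The main obstacle is precisely this rigidity/gluing step: showing that the locally reconstructed quadratic forms, each arising from an ellipsoidal section in a different base hyperplane, truly assemble into a single ambient quadratic form on $\mathbb R^n$. With only continuous $\delta$ there is no a priori differential structure on the family of sections, so the pairwise compatibility on $(n-3)$-dimensional intersections must be propagated across the whole sphere by a careful continuity argument, likely preceded by a smoothing approximation of $\delta$ that is removed at the end by density.
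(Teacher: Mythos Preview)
The paper does not prove this theorem. \thref{thm:20} appears in the historical survey of \sref{sec:16} as a quoted result of Bianchi and Gruber \cite[Theorem~1]{Bianchi1987}, and is later invoked as a black box in the proof of \thref{thm:02}. There is thus no argument in the paper to compare your attempt against; for the actual proof you must consult \cite{Bianchi1987}.

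Regarding the attempt itself, the reduction to Busemann's \thref{thm:23} has a real gap. You propose approximating an arbitrary hyperplane $\langle\boldsymbol\eta\rangle^\perp+p$ by shaken hyperplanes $H_{\boldsymbol\xi_k}^\delta$ with $\boldsymbol\xi_k\to\boldsymbol\eta$ and $p_k\to p$. But continuity of $\delta$ forces $H_{\boldsymbol\xi_k}^\delta\to H_{\boldsymbol\eta}^\delta=\langle\boldsymbol\eta\rangle^\perp+\delta(\boldsymbol\eta)\boldsymbol\eta$; the offset is pinned to $\delta(\boldsymbol\eta)$ and cannot be adjusted to hit an arbitrary $p$. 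So the limit section you obtain is just the one already given by the hypothesis, and you have gained nothing toward showing that \emph{all} hyperplanes through some fixed $p$ give ellipsoidal sections. Your fallback local gluing argument is closer in spirit to what Bianchi and Gruber do, but, as you acknowledge, the step of assembling the sectionwise quadratic forms into a single ambient one is the whole difficulty, and you have only described it rather than carried it out.
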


\noindent For a~much more detailed account, we refer the reader to the expository paper \cite[\S3]{Soltan2019}.\\

Amazingly, representations other than $\rho_0$ were not considered at all, until the remarkable paper on the isometric conjecture of S.~Banach by G.~Bor, L.~Hern\'andez-Lamoneda, V.~Jim\'enez de Santiago and L.~Montejano (2021) \cite{10.2140/gt.2021.25.2621} (again, we refer the reader to the expository paper \cite[\S6]{Soltan2019} for an overview of this otherwise interesting problem). Namely, they asked the following question:

\begin{question}[{\cite[Remark~2.9]{10.2140/gt.2021.25.2621}}]\thlabel{con:06}
Let $K\subset\mathbb R^n$, $n\geq 4$, be a~convex body and let $p\in\Int K$ be an interior point of $K$. If $p$ is a~pseudo-center of $1$-revolution for $K$, is $K$ itself a~body of $1$-revolution?
\end{question}

\noindent B.~Zawalski (2023) confirmed \thref{con:06} under additional assumption that $p$ is a~quasi-center of $1$-revolution and the boundary of $K$ is sufficiently smooth:

\begin{theorem}[{cf. \cite[Theorem~1.2]{Zawalski2024}}]\thlabel{thm:13}
Let $K\subset\mathbb R^n$, $n\geq 4$, be a~convex body with boundary of differentiability class $C^3$ and let $p\in\mathbb R^n$ be any point of the ambient space. If $p$ is a~quasi-center of affine $1$-revolution for $K$, then $K$ itself is a~body of affine $1$-revolution.
\end{theorem}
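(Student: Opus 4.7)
The plan is to promote the hyperplane-by-hyperplane affine $\O(n-2)$-symmetry of sections of $K$ to a global affine $\O(n-1)$-symmetry about a single line through $p$. For each affine hyperplane $H \ni p$, the hypothesis produces an affine $\O(n-2)$-action on $H$ fixing $p$ which preserves $K\cap H$; its fixed-point line $\ell(H)\subset H$ through $p$ is the axis of affine revolution of that section. Since $K$ has $C^3$ boundary and is therefore strictly convex, $K\cap H$ is not an affine ball for $H$ in an open dense family $\mathcal U\subset\Gr_{n-1}(\mathbb R^n)+p$, so $\ell(H)$ is uniquely determined on $\mathcal U$ and depends $C^2$-smoothly on $H$.

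I would then exploit local geometry at the poles $a^{\pm}(H)\in\partial K\cap\ell(H)$. At such a pole, the ambient tangent hyperplane to $\partial K$ in $\mathbb R^n$ must contain the equator of $K\cap H$ at that pole, because the affine $\O(n-2)$-symmetry of $K\cap H$ acts on the tangent space and must stabilize it. This identifies poles and equators intrinsically and yields a controlled law for the motion of $a^{\pm}(H)$ as $H$ varies in $\mathcal U$, pinning down $\ell(H)$ by boundary data of $K$.

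The heart of the argument is matching axes on intersecting hyperplanes. For $H_1,H_2\in\mathcal U$ with $M\colonequals H_1\cap H_2$ a codimension-two plane through $p$, the common section $K\cap M$ is simultaneously a codimension-one section of each affine revolution body $K\cap H_i$. Any such section is itself a body of affine $(n-3)$-reflection whose reflection subspace is determined algebraically by $\ell(H_i)$ and $M$; demanding that the two descriptions of $K\cap M$ coincide yields a rigid relation tying $\ell(H_1)$, $\ell(H_2)$, and $M$ together. Combining this compatibility with the smoothness of $H\mapsto\ell(H)$ and a connectedness argument over $\mathcal U$ should produce a single global line $\ell^*\subset\mathbb R^n$ through $p$ such that $\ell(H)$ is always determined by the pair $(\ell^*,H)$. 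Once $\ell^*$ is identified, the local $\O(n-2)$-actions in every $H$ containing $\ell^*$ assemble into a global affine $\O(n-1)$-action fixing $\ell^*$ and preserving $K$, exhibiting $K$ as a body of affine $1$-revolution.

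The main obstacle is this global synchronization step: a priori, the axis $\ell(H)$ inside $H$ is only determined up to the internal affine data of $K\cap H$, and one must leverage the codimension-two two-way constraints simultaneously for all pairs $H_1,H_2$ to turn pointwise compatibility into global rigidity. The $C^3$ hypothesis is essential both for the well-definedness and smoothness of $\ell(H)$ on $\mathcal U$ and for differentiating the axis map along one-parameter families of hyperplanes; relaxing it so as to obtain a purely topological version is precisely the content of \thref{con:06}.
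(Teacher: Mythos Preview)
This theorem is not proved in the present paper; it is quoted from \cite{Zawalski2024}. From the surrounding discussion (the paper's remark that ``the standard normal line has to be replaced with the \emph{affine normal line}'', the reference to \cite[Claim~3.4]{Zawalski2024} on the Blaschke structure, and the $C^3$ hypothesis itself), the original argument is rooted in affine differential geometry: one studies the affine shape operator / Blaschke structure on $\partial K$ and shows it is pointwise invariant under a group conjugate to $\rho_1$. Your outline is a purely synthetic alternative and does not touch these tools, so at best it would be a genuinely different proof. As written, however, it has real gaps.

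First, a concrete error: $C^3$ boundary does \emph{not} imply strict convexity. A convex body can have $C^\infty$ boundary with flat pieces. Even granting strict convexity, the claim that $K\cap H$ is not an affine ball for $H$ in an open dense set needs an argument (via \thref{thm:23} one gets a nonempty open set when $K$ is not an ellipsoid, but density is a separate analytic statement); and the asserted $C^2$ dependence of $\ell(H)$ is not free either.

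Second, and more seriously, the ``synchronization'' step is the entire content of the theorem and you have not carried it out. The key sentence---``demanding that the two descriptions of $K\cap M$ coincide yields a rigid relation''---is unjustified: the two reflections of $K\cap M$ induced from $H_1$ and $H_2$ are both \emph{some} elements of the affine symmetry group of $K\cap M$, and there is no reason they must agree. If $K\cap M$ happens to be an ellipsoid (which you cannot exclude on a codimension-two family), it has a full $\O(n-2)$ worth of such reflections and the relation is vacuous. Even on the good set the constraint you obtain relates $\ell(H_1)$, $\ell(H_2)$, $M$ only weakly, and you have not indicated any mechanism (PDE, integrability, topological obstruction) that upgrades pairwise compatibility to the existence of a single global $\ell^*$.

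Finally, even if a global $\ell^*$ exists, your last step---``the local $\O(n-2)$-actions in every $H\supset\ell^*$ assemble into a global affine $\O(n-1)$-action''---is precisely where the \emph{affine} difficulty lies. Each $K\cap H$ is a body of \emph{affine} $1$-revolution about $\ell^*$, but the affine conjugation realizing this may vary with $H$, so the ellipsoidal cross-sections transverse to $\ell^*$ need not match up across different $H$'s. In the orthogonal setting this assembly is automatic (cf.\ the proof of \thref{thm:21}); in the affine setting it is exactly the problem that the affine-differential-geometric machinery in \cite{Zawalski2024} is designed to solve.
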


\noindent For the orthogonal variant of \thref{thm:13}, see also \thref{thm:15} in this paper. Originally, Zawalski assumed that $p$ is the center of symmetry for $K$, but in the course of the proof, he used only the fact that the axis of $1$-revolution of every hyperplanar section passes through $p$. Admittedly, at that time, the author was not aware of the notion of a~quasi-center, already established in the literature. It should be noted that although his argument uses several facts from \cite{10.2140/gt.2021.25.2621} that were originally formulated only for symmetric convex bodies (i.e., unit balls in Banach spaces), all of them are true as well for general convex bodies, in most cases with the same proof. Under the additional assumption that $K$ has a~unique diameter $D$ and $p\notin D$, \thref{con:06} was independently confirmed by M.~Angeles Alfonseca, M.~Cordier, J.~Jer\'onimo-Castro and E.~Morales-Amaya (2024) \cite[Theorem~5]{Alfonseca}.\\

Let us conclude with a~general question we would like to see resolved:

\begin{question}\thlabel{con:05}
Let $K\subset\mathbb R^n$, $n\geq 4$, be a~convex body, and let $p\in\mathbb R^n$ be any point of the ambient space. If $p$ is a~pseudo-center of (affine) $k$-revolution for $K$, $0\leq k<n-1$, is $K$ itself a~body of (affine) $k$-revolution?
\end{question}

\noindent As in the case of \thref{con:12}, if were able to prove the affine variant of \thref{con:05}, the orthogonal variant would follow immediately from general \thref{lem:01,lem:07} in this paper.

\section{An aligned center of symmetry}\label{sec:02}

Our ultimate goal is to characterize high-dimensional bodies of (affine) $k$-revolution in the spirit of Roger's \thref{thm:01}, through a~suitable generalization of Bezdek's conjecture (i.e., \thref{con:02,con:03}). However, since the assumption of symmetry only for hyperplane sections passing through a~fixed point is known to be insufficient (see \cite[\S2]{MR2168169}), we inevitably need to impose some additional conditions on the structure of the family of symmetric sections. If $K$ is a~body of (affine) $k$-revolution, then every point $p$ of the hyperaxis of revolution is a~quasi-center of (affine) $k$-revolution for $K$, whereas every point $p$ of the complement of the hyperaxis of revolution is a~pseudo-center of (affine) $k$-revolution, but not a~quasi-center of (affine) $k$-revolution for $K$, unless $K$ is an ellipsoid. In light of \cite[Conjecture~2]{Alfonseca}, the assumption that $p$ is of the second type should be enough to conclude that $K$ is a~body of (affine) $k$-revolution. In this paper, however, we will focus on the points of the first type.\\

Before attacking an open problem, it is usually worth considering some special cases first. J.~Jer\'onimo-Castro, L.~Montejano, and E.~Morales-Amaya had the promising idea of taking as a~quasi-center of $1$-revolution a~regular point on the boundary:

\begin{theorem}[{\cite[Lemma~3.2]{faor}}]\thlabel{thm:08}
Let $K\subset\mathbb R^3$ be a~convex body and let $p\in\partial K$ be any smooth point of the boundary. If $p$ is a~quasi-center of $1$-revolution for $K$, then $K$ is a~body of $1$-revolution, with the axis of $1$-revolution passing through $p$ and perpendicular to $\partial K$.
\end{theorem}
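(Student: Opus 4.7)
The plan is to place $p$ at the origin with $T_p\partial K=\{z=0\}$ and the normal line $L\colonequals\{x=y=0\}$ pointing into $K$, and to exhibit a continuous family of reflection symmetries of $K$ through planes containing $L$, thereby realising $K$ as a body of $1$-revolution about $L$.

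The key rigidity is to identify the axis $\ell_H\ni p$ of the reflection on $K\cap H$ granted by the quasi-center hypothesis, for each plane $H\in\Gr_2(\mathbb R^3)+p$. Smoothness of $\partial K$ at $p$ forces $T_p\partial K$ to be the unique supporting plane at $p$, so any $H\ne T_p\partial K$ meets $\Int K$ and $K\cap H$ is a $2$-dimensional convex body for which $\tau_H\colonequals H\cap T_p\partial K$ is the unique tangent line at $p$. The reflection must preserve $\tau_H$, so either $\ell_H=\tau_H$ or $\ell_H\perp\tau_H$ in $H$; the first option would force $K\cap H\subset\tau_H$ (since $K\cap H$ lies on one side of $\tau_H$ in $H$ and the reflection swaps those sides), contradicting nonempty interior. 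Thus $\ell_H$ is canonically the line in $H$ through $p$ orthogonal to $\tau_H$ within $H$.

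Next, fix $\theta\in[0,2\pi)$, set $\hat\tau_\theta\colonequals(\cos\theta,\sin\theta,0)$, and for $\phi\in(0,\pi)$ let $H_{\theta,\phi}$ be the plane through $p$ spanned by $\hat\tau_\theta$ and $(-\sin\theta\cos\phi,\cos\theta\cos\phi,\sin\phi)$. As $\phi$ varies, these planes sweep out every non-tangent plane through $p$ containing $\mathbb R\hat\tau_\theta$, and the previous paragraph identifies their reflection axes. A direct computation in this orthogonal basis shows that the 2D reflection of $K\cap H_{\theta,\phi}$ is the restriction to $H_{\theta,\phi}$ of the $\phi$-independent 3D reflection
\[
\sigma_\theta(x,y,z)\colonequals(x,y,z)-2(x\cos\theta+y\sin\theta)(\cos\theta,\sin\theta,0)
\]
through the plane $P_\theta\colonequals\{x\cos\theta+y\sin\theta=0\}\supset L$. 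Since every point of $\mathbb R^3\setminus T_p\partial K$ lies on some $H_{\theta,\phi}$, and every point of $K\cap T_p\partial K$ can be approximated by points of $\Int K\subset\{z>0\}$, closedness of $K$ yields $\sigma_\theta(K)=K$ for every $\theta$. Since $\sigma_\theta\circ\sigma_{\theta'}$ is the rotation about $L$ by $2(\theta-\theta')$, the generated group is (conjugated to) $\rho_1$ with hyperaxis $L$, and $K$ is a body of $1$-revolution with axis $L\perp T_p\partial K$.

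The main obstacle is the rigidity claim $\ell_H\perp\tau_H$, which depends essentially on the smoothness of $\partial K$ at $p$ via the uniqueness of the tangent line to $\partial(K\cap H)$; without this, the axis $\ell_H$ would not be pinned down by $H$. Once $\ell_H$ is canonically determined, the gluing of the 2D reflections in the pencil through $\mathbb R\hat\tau_\theta$ into a single global 3D reflection $\sigma_\theta$ is a direct orthogonal-basis computation, and the passage from $\sigma_\theta$-invariance for all $\theta$ to $\rho_1$-invariance is standard.
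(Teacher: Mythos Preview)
Your proof is correct and follows essentially the same approach as the original argument from \cite{faor} summarized in the paper: the key observation is that the reflection axis $\ell_H$ of $K\cap H$ must be perpendicular to the tangent line $\tau_H=H\cap T_p\partial K$, which uniquely pins down $\ell_H$; your subsequent gluing of the $2$-dimensional reflections over the pencil of planes through $\mathbb R\hat\tau_\theta$ into the single $3$-dimensional reflection $\sigma_\theta$ is precisely the mechanism behind the paper's more general \thref{thm:21}.
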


\noindent Their simple proof hangs on the observation that if a~regular point $p$ on the boundary of $K\cap H$ is a~fixed point of some affine symmetry $S$ of $K\cap H$, then the affine hyperplane tangent to $K\cap H$ at $p$ is an invariant subspace of $S$. Let us isolate this property:

\begin{definition}\thlabel{def:03}
Let $K\subset\mathbb R^n$, $n\geq 3$, be a~convex body, and let $p\in\mathbb R^n$ be any point of the ambient space. We say that $p$ is an \emph{$m$-aligned pseudo-center of (affine) $k$-reflection}, $n-k\leq m<n$, $1\leq k<n-1$, if there exists an $m$-dimensional affine subspace $T\in\Gr_m(\mathbb R^n)+p$ such that every intersection $K\cap H$ of $K$ with an affine hyperplane $H\in\Gr_{n-1}(\mathbb R^n)+p$ is invariant under the action of a~group (affinely) conjugate to $\pi_k$, with hyperplane of reflection contained in $T\cap H$. Similarly, we say that $p$ is an \emph{$m$-aligned pseudo-center of (affine) $k$-revolution}, $n-k\leq m<n$, $1\leq k<n-1$, if there exists an $m$-dimensional affine subspace $T\in\Gr_m(\mathbb R^n)+p$ such that every intersection $K\cap H$ of $K$ with an affine hyperplane $H\in\Gr_{n-1}(\mathbb R^n)+p$ is invariant under the action of a~group (affinely) conjugate to $\rho_k$, with hyperplane of revolution contained in $T\cap H$.
\end{definition}

\begin{definition}\thlabel{def:05}
Let $K\subset\mathbb R^n$, $n\geq 3$, be a~convex body, and let $p\in\mathbb R^n$ be any point of the ambient space. We say that $p$ is an \emph{$m$-aligned quasi-center of (affine) $k$-reflection}, $n-k\leq m<n$, $1\leq k<n-1$, if there exists an $m$-dimensional affine subspace $T\in\Gr_m(\mathbb R^n)+p$ such that every intersection $K\cap H$ of $K$ with an affine hyperplane $H\in\Gr_{n-1}(\mathbb R^n)+p$ is invariant under the action of a~group (affinely) conjugate to $\pi_k$ and fixing $p$, with hyperplane of reflection contained in $T\cap H$. Similarly, we say that $p$ is an \emph{$m$-aligned quasi-center of (affine) $k$-revolution}, $n-k\leq m<n$, $1\leq k<n-1$, if there exists an $m$-dimensional affine subspace $T\in\Gr_m(\mathbb R^n)+p$ such that every intersection $K\cap H$ of $K$ with an affine hyperplane $H\in\Gr_{n-1}(\mathbb R^n)+p$ is invariant under the action of a~group (affinely) conjugate to $\rho_k$ and fixing $p$, with hyperplane of revolution contained in $T\cap H$.
\end{definition}

\noindent Recall that we understand the prefix $k$- modulo the dimension of the underlying space.\\

Interestingly, the concept of an aligned center of symmetry has already been studied in the literature, albeit implicitly. It is easy to see that the original proof of \thref{thm:08} works equally well for any point $p\in\mathbb R^3$, not necessarily on the boundary of $K$, being a~$(-1)$-aligned quasi-center of $1$-reflection (or, equivalently, $1$-revolution). Moreover, in the proof of \cite[Theorem~1]{Alfonseca}, the assumption that $K$ is a~strictly convex body centrally symmetric with respect to the origin is required only to show that the quasi-center of $1$-reflection $p\in\mathbb R^n$ is actually $(-1)$-aligned.

\begin{remark}
In general, if a~smooth point $p\in\partial K$ is a~quasi-center of any (affine) symmetry for a~convex body $K$, then it is necessarily $(-1)$-aligned. Indeed, let $T\in\Gr_{n-1}(\mathbb R^n)+p$ be the unique hyperplane tangent to $K$ at $p$ and let $H\in\Gr_{n-1}(\mathbb R^n)+p$ be any affine hyperplane different from $T$. Since the image of the unique hyperplane $T\cap H$ tangent to $K\cap H$ at $p$ under any affine symmetry of $K\cap H$ fixing $p$ is again the same unique tangent hyperplane, $T\cap H$ is invariant under the action of the compact group of all affine symmetries of $K\cap H$ fixing $p$. Furthermore, since $K$ is contained in one of the two half-spaces determined by $T$, the subrepresentation complementary to $T\cap H$ must be orientation-preserving, and hence trivial.
\end{remark}

The property of being an aligned quasi-center is not, however, limited exclusively to the points $p$ on the boundary. If $K$ is a~body of (affine) $k$-revolution with hyperplane of (affine) revolution $T$, then every point $p$ of the hyperaxis of $k$-revolution is a~$(-k)$-aligned quasi-center of $k$-revolution (and thus also a~$(-k)$-aligned quasi-center of $k$-reflection). Indeed, for any hyperplane $H\in\Gr_{n-1}(\mathbb R^n)$, the hyperplane of (affine) revolution for $K\cap(H+p)$ is the intersection $T\cap H$. It is, therefore, natural to assume that a~quasi-center of (affine) $k$-reflection is $(-k)$-aligned. This assumption should be enough to conclude that $K$ is a~body of (affine) $k$-revolution. In the orthogonal setting, we prove it in \thref{thm:22}.\\

Now, the first question that immediately arises is whether an affine analog of \thref{thm:08} also holds. In \cite{faor}, the authors observed that the axis of revolution of $K\cap H$ (necessarily passing through $p$) must be perpendicular to the line tangent to $K\cap H$ at $p$ and hence its position is uniquely determined. Unfortunately, in the affine setting, there is no such simple argument, since the standard normal line has to be replaced with the \emph{affine normal line} (see \cite[Definition~II.3.1]{nomizu1994affine}). And this is where the new idea must come in.

\subsection{A general proof scheme}\label{sec:07}

\begin{figure}
\includegraphics{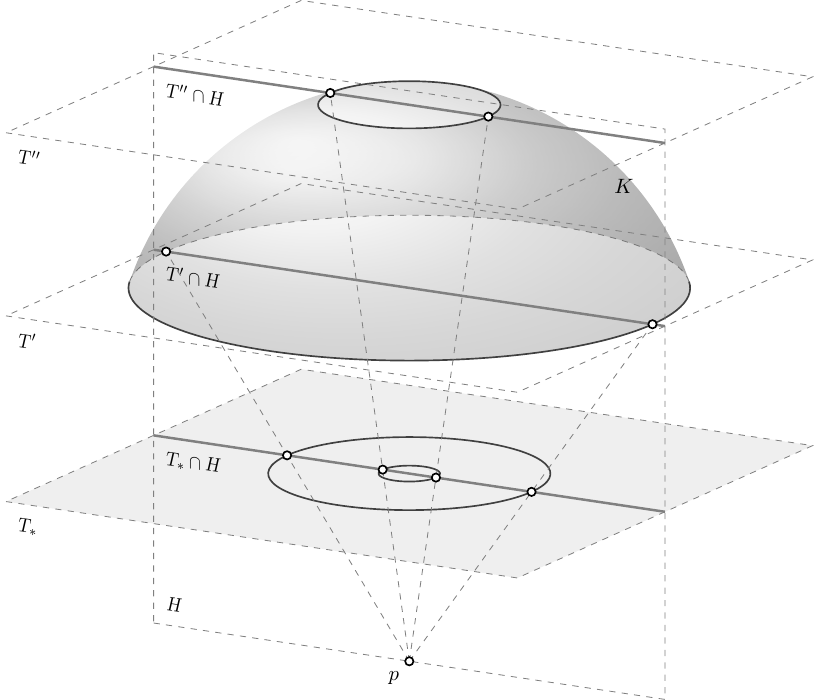}
\caption{Reducing the problem of a~$(-1)$-aligned quasi-center to a~many-body problem in codimension $1$}
\label{fig:02}
\end{figure}

That being so, suppose that $p\in\mathbb R^n$ is a~$(-1)$-aligned quasi-center of affine $k$-reflection (resp. $k$-revolution) for $K$, $1\leq k<n-1$ (see \fref{fig:02}). Let $T\in\Gr_{n-1}(\mathbb R^n)$ be the affine hyperplane from the definition and let $T'\in\Graff_{n-1}(\mathbb R^n)$ be any affine hyperplane parallel to $T$ and intersecting $K$. Also, let $H\in\Gr_{n-1}(\mathbb R^n)+p$ be any affine hyperplane. By definition, $K\cap H$ is invariant under the action of a~group $G(H)$ affinely conjugate to $\pi_k$ (resp. $\rho_k$) with invariant subspace $(T+p)\cap H$ and fixing $p$. But since the quotient representation of $G(H)$ is trivial, $T'\cap H$ is also an invariant subspace of $G(H)$. Because the choice of the affine hyperplane $H$ was arbitrary, it follows that all the intersections $K\cap T'\cap H'$ of $K\cap T'$ with affine hyperplanes $H'\in\Graff_{n-2}(T')$ are bodies of affine $(k-1)$-reflection (resp. $(k-1)$-revolution).\\

Now, even if we are able to conclude that $K\cap T'$ is a~body of affine $(k-1)$-revolution for every affine hyperplane $T'$ parallel to $T$, we still have to show that all the hyperaxes of affine revolution for different affine hyperplanes $T'$ are $1$-codimensional subspaces of a~fixed affine subspace of the ambient space, being \emph{a posteriori} the hyperaxis of affine revolution for $K$. This turns out to be non-trivial and leads to many new, interesting questions, which we discuss in \sref{sec:05}.\\

To understand the difficulty, fix any affine hyperplane $T_*$ parallel to $T$ but not passing through $p$ and denote by $\phi$ the central projection of the ambient space $\mathbb R^n\setminus(T+p)$ without the affine hyperplane $T+p$ onto the affine hyperplane $T_*$ from the point $p$ (\fref{fig:02}). Since the affine hyperplanes $T_*$ and $T'$ are parallel, the projectivity $\phi$ restricted to $T'$ is an affine isomorphism. Hence the image $\phi(K\cap T'\cap H)=\phi(K\cap T')\cap H$ is likewise invariant under the action of the same group $G(H)$ on the affine subspace $T_*\cap H$. Thus, denoting by $\boldsymbol n$ the unit normal vector of $T_*$, we have obtained a~continuous family $(K_\tau)_{\tau\in\mathbb R}$ of convex bodies
$$K_\tau\colonequals\phi(K\cap(T_*+\tau\boldsymbol n)),$$
whose hyperplanar sections $(K_\tau\cap H')_{\tau\in\mathbb R}$, for any fixed $H'\in\Graff_{n-2}(T_*)$, are all invariant under the action of the same group affinely conjugate to $\pi_{k-1}$ (resp. $\rho_{k-1}$). Clearly, we would like to conclude that all $K_\tau$, $\tau\in\mathbb R$, are bodies of affine $(k-1)$-revolution, invariant under the action of the same group affinely conjugate to $\rho_{k-1}$, but it turns out to be a~quite perplexing problem. Moreover, it seems to have not been considered in the literature so far.\\

For our purposes, in \sref{sec:05} we have made initial progress toward the ultimate solution. Specifically, we solved the problem completely in $\mathbb R^2$ (see \thref{lem:03}), and in higher dimensions, we obtained a~solution under the additional assumption that all $K_\tau$, $\tau\in\mathbb R$, are ellipsoids (see \thref{lem:05}). This latter condition can be derived independently when the symmetry group is sufficiently rich, based on other known results (i.a., \thref{thm:03,thm:20}).

\subsection{Statements of the new results}\label{sec:09}

The first of our theorems is an immediate generalization of \thref{thm:08}, both in terms of dimension of the ambient space and orthogonality of the symmetry group:

\begin{theorem}[cf. \thref{intro:01}]\thlabel{thm:14}
Let $K\subset\mathbb R^n$, $n\geq 3$, be a~convex body, and let $p\in\mathbb R^n$ be any point of the ambient space. If $p$ is a~$(-1)$-aligned quasi-center of (affine) $1$-reflection for $K$, then $K$ is a~body of (affine) $1$-revolution, with axis of $1$-revolution passing through $p$, or an ellipsoid.
\end{theorem}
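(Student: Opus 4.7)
The strategy is to exploit the $(-1)$-alignment by studying how $K$ is fibered by its parallel slices $K \cap T_\tau$, where $T_\tau$ ranges over the affine hyperplanes parallel to the aligned hyperplane $T$. First, for every hyperplane $H \in \Gr_{n-1}(\mathbb R^n) + p$, the affine involution $R_H$ furnished by the hypothesis, with axis $A_H$ and reflection hyperplane $T \cap H$, preserves $K \cap H$ and restricts on the line $T_\tau \cap H$ to the central symmetry about the single fixed point $A_H \cap T_\tau$. Since, for $\tau \neq 0$, every affine hyperplane of $T_\tau$ arises as $T_\tau \cap H$ for some such $H$, every hyperplanar section of the slice $K \cap T_\tau \subset T_\tau$ is centrally symmetric.

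For $n \geq 4$ the slices live in dimension $n - 1 \geq 3$, and Burton's strengthening of Brunn's theorem (\thref{thm:03}) forces each $K \cap T_\tau$ to be an ellipsoid. For $n = 3$ the slices are planar and Brunn's theorem is vacuous; here, in the orthogonal setting, $A_H$ is uniquely the $1$-dimensional orthogonal complement of $T \cap H$ in $H$, so $A_H \cap T_\tau$ is the foot of the perpendicular from $p_\tau := p + \tau \vec n_T$ to the line $H \cap T_\tau$. A short computation then shows that, if every chord of $K \cap T_\tau$ has midpoint equal to such a foot, then any two boundary points of $K \cap T_\tau$ are equidistant from $p_\tau$, so $K \cap T_\tau$ is a Euclidean disk centered at $p_\tau$. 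The affine $n = 3$ case is reduced to the orthogonal one via the general device of \sref{sec:06}.

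It remains to assemble the slices into $K$. In the orthogonal case the balls $K \cap T_\tau$ are centered on the line $L := p + \mathbb R \vec n_T$, so $K$ is immediately a body of $1$-revolution about $L$. In the affine case, the central projection $\phi$ from $p$ onto a translate $T_*$ of $T$ carries the family of $\tau$-slices to a family of ellipsoids $K_\tau := \phi(K \cap T_\tau) \subset T_*$ whose hyperplanar sections have $\tau$-independent centers of symmetry; writing each $K_\tau$ in canonical form $(x - c_\tau)^\top M_\tau^{-1}(x - c_\tau) \leq 1$ and comparing the section-center formula for two different values of $\tau$ forces $c_\tau$ to be independent of $\tau$ and $M_\tau$ to vary only by a scalar factor, so $\{K_\tau\}$ is homothetic and concentric. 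Undoing $\phi$, the slices $K \cap T_\tau$ are similar ellipsoids centered on a line $A$ through $p$, and an affine straightening of $A$ to the normal direction of $T$ together with a linear rectification of the common shape exhibits $K$ as a body of affine $1$-revolution about $A$ (or, in the degenerate case, as an ellipsoid). The main obstacles are the affine many-body rigidity step — the ``perplexing'' problem highlighted in \sref{sec:07}, which in the present $(-1)$-aligned setting is resolved by the direct ellipsoid computation above — and the low-dimensional case $n = 3$, where Brunn's theorem fails in dimension $2$ and a separate geometric treatment is required.
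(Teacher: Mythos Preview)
Your overall strategy matches the paper's for $n\geq 4$ (affine) and for $n=3$ (orthogonal): slicing by translates of $T$, invoking Brunn's theorem on the slices, and then the ellipsoid rigidity computation (which is exactly \thref{lem:05}) is precisely how the paper proceeds, and the foot-of-perpendicular argument is the content of \thref{thm:08}. The gap is the affine case in dimension $3$. Your sentence ``the affine $n=3$ case is reduced to the orthogonal one via the general device of \sref{sec:06}'' fails on two counts. First, \thref{lem:01} is stated and proved only for $n\geq 4$; the remark immediately following it explicitly leaves $n=3$ open. Second, and more fundamentally, that device runs in the opposite direction: it takes the \emph{orthogonal} hypothesis together with the \emph{affine} conclusion and upgrades to the orthogonal conclusion. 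It cannot manufacture the affine conclusion from the orthogonal one, because the affine hypothesis is strictly weaker. Your disk argument genuinely uses that $A_H$ is the orthogonal complement of $T\cap H$ in $H$; under a merely affine reflection the axis direction is not pinned down this way, and no single affine change of coordinates straightens all the $A_H$ simultaneously.

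The paper fills this gap with \thref{lem:03}: if two planar convex bodies share the midpoint of every common chord, they are concentric homothetic ellipses. This is not routine --- its proof needs Alexandrov's theorem to obtain twice-differentiability of the boundaries, then a second-derivative computation forcing every midcurve to be a straight line, and finally Blaschke's classical characterization of ellipses by straight midcurves. Your ``direct ellipsoid computation'' in the final paragraph already presupposes that each $K_\tau$ is an ellipse, which for $n=3$ is precisely what \thref{lem:03} is needed to establish. (A smaller point: for the orthogonal case with $n\geq 4$ you assert the slices are \emph{balls} centered on $p+\mathbb R\vec n_T$, but Brunn's theorem gives only ellipsoids. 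You must either push the foot-of-perpendicular argument through in higher dimension, or do as the paper does and deduce the orthogonal $n\geq 4$ conclusion from the affine one via \thref{lem:01}.)
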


\noindent An analogous theorem for a~$(-1)$-aligned quasi-center of (affine) $1$-revolution is just a~trivial corollary, as the hypothesis is strictly stronger:

\begin{corollary}[cf. \thref{intro:02}]\thlabel{thm:18}
Let $K\subset\mathbb R^n$, $n\geq 3$, be a~convex body, and let $p\in\mathbb R^n$ be any point of the ambient space. If $p$ is a~$(-1)$-aligned quasi-center of (affine) $1$-revolution for $K$, then $K$ is a~body of (affine) $1$-revolution, with axis of $1$-revolution passing through $p$.
\end{corollary}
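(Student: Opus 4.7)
My plan is to obtain \thref{thm:18} as a short consequence of \thref{thm:14} by verifying that the hypothesis is strictly stronger. The key observation is that any group (affinely) conjugated to $\rho_1$ contains the rotation by angle $\pi$, which under the identification of \eqref{eq:08} corresponds to $-\mathbb I\in\O(n-2)$. In the splitting induced by the axis and the hyperplane of revolution, this element acts as $+\mathbb I$ on the axis and as $-\mathbb I$ on the $(n-2)$-dimensional complement, which is precisely the canonical generator of the appropriately conjugated copy of $\pi_1$. Therefore invariance of $K\cap H$ under a conjugate of $\rho_1$ fixing $p$, with hyperplane of revolution inside $T\cap H$, automatically entails invariance under a conjugate of $\pi_1$ fixing $p$, with hyperplane of reflection equal to $T\cap H$. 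Consequently every $(-1)$-aligned quasi-center of (affine) $1$-revolution is a $(-1)$-aligned quasi-center of (affine) $1$-reflection, so \thref{thm:14} applies and yields that $K$ is either a body of (affine) $1$-revolution with axis through $p$, or an ellipsoid.

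If the first alternative holds there is nothing left to prove, so it remains to handle the ellipsoid alternative. In the affine setting this is immediate: every ellipsoid $E$ is the image of a Euclidean ball $B$ under an affine isomorphism $\phi$, and the group $\phi\rho_1\phi^{-1}$ stabilises $E$ with axis of revolution $\phi(L_0)$, where $L_0$ is the axis of $\rho_1$ in $B$. Since $L_0$ may be chosen to be any line through the origin, $\phi(L_0)$ may be any line through the centre of $E$; in particular, we can take it to be the line joining the centre of $E$ to $p$, giving the affine case at once.

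The only case with genuine substance is an ellipsoid in the orthogonal setting. Here I would argue by contradiction: suppose $K\subset\mathbb R^n$ were an ellipsoid with pairwise distinct principal semi-axes. Then the restriction of its defining quadratic form to a generic $(n-1)$-dimensional linear subspace $\vec H$ would also have pairwise distinct eigenvalues, so $K\cap H$ would be an ellipsoid with $n-1$ pairwise distinct semi-axes, and hence admit no positive-dimensional group of rotational symmetries. This contradicts the (strengthened) hypothesis of \thref{thm:18}, so at least two principal semi-axes of $K$ must coincide, making $K$ an ellipsoid of revolution, i.e., a body of orthogonal $1$-revolution. The quasi-center assumption together with the $(-1)$-alignment hyperplane $T$ then pins the axis of $K$ onto the line through $p$ orthogonal to $T$: the alignment forces the hyperplane of revolution of every section to equal $T\cap H$, so the hyperplane of revolution of $K$ itself must be parallel to $T$, and the axis is the unique line through $p$ orthogonal to it.

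I expect this spectral dimension count to be the only step requiring attention, but it is short: hyperplane sections of an ellipsoid admit a completely explicit description in terms of the principal directions of $K$ and the normal of the cutting hyperplane, from which the genericity claim follows by inspection.
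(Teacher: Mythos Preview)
Your overall plan coincides with the paper's: observe that any conjugate of $\rho_1$ contains a conjugate of $\pi_1$ (via $-\mathbb I_{n-2}\in\O(n-2)$), so the hypothesis of \thref{thm:18} is at least as strong as that of \thref{thm:14}; apply \thref{thm:14}; then absorb the ellipsoid alternative. Your handling of the affine ellipsoid case is correct and is what the paper has in mind.

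The genuine gap is in the orthogonal ellipsoid case for $n\geq 4$. Your contradiction argument shows only that if all $n$ semi-axes of $K$ are pairwise distinct then a generic hyperplane section has $n-1$ pairwise distinct semi-axes and hence admits no positive-dimensional rotational symmetry; from this you conclude that at least two semi-axes of $K$ coincide, and then assert that this already makes $K$ a body of orthogonal $1$-revolution. That last inference is false for $n\geq 4$: being a body of $1$-revolution means that $\O(n-1)$ acts, which requires $n-1$ equal semi-axes, not merely two. An ellipsoid in $\mathbb R^5$ with eigenvalue multiplicities $(2,2,1)$ survives your argument yet is not a body of $1$-revolution. (For $n=3$ your argument fails even earlier: $\rho_1$ on a $2$-dimensional section is the discrete group $\O(1)$, so the phrase ``no positive-dimensional group of rotational symmetries'' produces no contradiction at all.) What is actually needed is the statement: if every hyperplane section through $p$ is an ellipsoid with an eigenvalue of multiplicity at least $n-2$, then $K$ itself has an eigenvalue of multiplicity at least $n-1$. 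The paper proves precisely this as \thref{lem:07}, by computing the characteristic polynomial of the restricted quadratic form explicitly and showing that if every convex combination of a finite family of polynomials has a root of the required multiplicity then they all share that root. So the spectral step you flag as the only one ``requiring attention'' is indeed the whole substance of the corollary in the orthogonal setting, and your sketch does not yet cover it.
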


By virtue of \thref{lem:07}, we no longer need to distinguish the case of an ellipsoid in \thref{thm:18}. Such a~dichotomy seems to be a~phenomenon occurring only for discrete groups. Indeed, every ellipsoid is a~body of $k$-reflection, but not necessarily a~body of $k$-revolution, although it is always a~body of \emph{affine} $k$-revolution, $0\leq k<n-1$. Note also that, unlike \thref{thm:15}, of which \thref{thm:18} is a~special case, our alternative proof does not require any smoothness assumption.\\

While carefully examining the proof of \thref{thm:14}, one can come to the conclusion that the hypothesis was actually excessive and a~$1$-codimensional family of affine hyperplanes passing through a~$(-1)$-aligned quasi-center should have been enough. And indeed it is so, but then some delicate issues arise that have not occurred before:

\begin{theorem}[cf. \thref{intro:03}]\thlabel{thm:16}
Let $K\subset\mathbb R^4$ be a~strictly convex body, let $p\in\Int K$ be any point in the interior of $K$, let $T\in\Gr_3(\mathbb R^4)$ be any hyperplane and let $\sigma:\Gr_2(T)\to\Gr_3(\mathbb R^4)$ be a~Lipschitz continuous map such that $\sigma(H)\cap T=H$ for all $H\in\Gr_2(T)$. If every intersection $K\cap(\sigma(H)+p)$ is invariant under the action of a~group affinely conjugate to $\pi_1$ with hyperplane of affine reflection $H$ and fixing $p$, then $K$ is a~body of affine $1$-reflection, with the axis of $1$-reflection passing through $p$.
\end{theorem}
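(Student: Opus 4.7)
The plan is to apply the general proof scheme outlined in \sref{sec:07}. Choose coordinates on $\mathbb R^4$ so that $T=\{x_4=0\}$, $p=\boldsymbol 0$, $T_*=\{x_4=1\}$, with the central projection $\phi:\mathbb R^4\setminus(T+p)\to T_*$ given by $(x,t)\mapsto(x/t,1)$. Writing $\sigma(H)=H\oplus\mathbb R(v(H)+\boldsymbol e_4)$ with $v(H)\in H^\perp$ uniquely determined, the Lipschitz continuity of $\sigma$ translates into that of the map $v:\Gr_2(T)\to T$, and $T_*\cap\sigma(H)=v(H)+H$. For each $c\neq 0$ with $T'_c:=\{x_4=c\}$ meeting $K$ put $K_c:=\phi(K\cap T'_c)$, a strictly convex body in $T_*\cong T$. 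The decisive observation is that the axis $L_H$ of the affine reflection $S_H$ is a fixed $1$-dimensional line through $p$, hence it meets $T_*$ in a single point $w_H\in v(H)+H$ that does \emph{not} depend on $c$; pushing $S_H$ through $\phi$ one sees that each $K_c\cap(v(H)+H)$ is centrally symmetric about this common point $w_H$.

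The family $\{v(H)+H:H\in\Gr_2(T)\}$ is precisely a Shaken-False-Center family $\{H_{\boldsymbol\xi}^\delta\}_{\boldsymbol\xi\in\mathbb S^2}$ in the sense of \thref{thm:04,thm:05}, with the odd Lipschitz function $\delta(\boldsymbol\xi)=v(\boldsymbol\xi^\perp)\cdot\boldsymbol\xi$. For $|c|$ sufficiently small, the dilation factor $1/|c|$ combined with the positive thickness of $K$ at the interior point $p$ forces $K_c$ to contain balls of arbitrarily large radius, so every pair of planes of our family meets $\Int K_c$. Applying \thref{thm:05} to the strictly convex $K_c$ then yields central symmetry about some point $c^*(c)\in T_*$. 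The relation $w_H=v(H)+\mathrm{proj}_H(c^*(c))$, combined with the $c$-independence of $w_H$, forces $\mathrm{proj}_H(c^*(c))$ to be $c$-independent for every $H$; picking any two distinct $H\in\Gr_2(T)$ already pins $c^*(c)$ down to a single fixed point $c^*\in T$.

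Consequently the affine involution $S(y,t):=(2tc^*-y,\,t)$ is an affine $1$-reflection with axis $A=\mathbb R(c^*,1)$ through $p$ and $(-1)$-direction $T$, and $S(K\cap T'_c)=K\cap T'_c$ for every $c$ in a neighbourhood of $0$. The final and main step is to propagate this equality to the full range $c\in(c_{\min},c_{\max})$ where $T'_c$ meets $K$; this is the chief obstacle I expect. As $|c|$ grows, $K_c$ shrinks and the interior-intersection hypothesis of \thref{thm:05} may fail, so one cannot simply rerun the slice-by-slice argument. The intended remedy is a connectedness argument: the set of $c$ with $S(K\cap T'_c)=K\cap T'_c$ is closed by Hausdorff continuity of $c\mapsto K\cap T'_c$, and one must show it is also open by combining the hypothesis at every slice with the Lipschitz continuity of $\sigma$ and the strict convexity of $K$, thereby ruling out the ``convex continuation'' phenomenon by which $K$ and $S(K)$ could otherwise diverge outside the initial slab.
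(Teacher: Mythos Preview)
Your setup is correct and the application of \thref{thm:05} for small $|c|$ is the right first move. Two points, however, need correction.

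The relation $w_H = v(H) + \mathrm{proj}_H(c^*(c))$ is false in general: even for an ellipsoid with axes not aligned to the coordinate directions, the center of a hyperplane section is not the orthogonal projection of the body's center onto that hyperplane. So your argument for the $c$-independence of $c^*(c)$ does not go through as written.

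More importantly, the propagation to all $c$ can be done directly, without any connectedness argument, and the same device also repairs the previous step. Given the single reference slice $K_{c_*}$ centrally symmetric about $c_*$, take any line $L\subset T_*$ through $c_*$. The restriction of your odd continuous function $\delta$ (now centred at $c_*$) to the great circle $\{\boldsymbol\xi\in\mathbb S^2:\boldsymbol\xi\perp\vec L\}$ is again odd and continuous, hence vanishes at some $\boldsymbol\xi_0$; for the corresponding $H_0=\boldsymbol\xi_0^\perp$ the plane $v(H_0)+H_0$ contains $L$ and in particular contains $c_*$. Since $K_{c_*}$ is centrally symmetric about $c_*\in v(H_0)+H_0$, the section $K_{c_*}\cap(v(H_0)+H_0)$ has center $c_*$, whence $w_{H_0}=c_*$ by uniqueness. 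But $w_{H_0}$ is $c$-independent, so $c_*$ is the midpoint of $K_c\cap L$ for \emph{every} $c$. As $L$ through $c_*$ was arbitrary, every $K_c$ is centrally symmetric about $c_*$, and the affine reflection you wrote down preserves all of $K$. This single intermediate-value step replaces both your projection formula and the connectedness argument you anticipated needing.

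Finally, the claim that for small $|c|$ every pair of planes $v(H_1)+H_1$, $v(H_2)+H_2$ meets $\Int K_c$ requires showing that the pairwise intersections stay within bounded distance of the axis; this is where the Lipschitz hypothesis is actually used, and it is the content of a separate geometric lemma (\thref{lem:08} in the paper) rather than something one may simply assert.
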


\noindent The hyperplane $T$ plays the same role here as in the definition of an aligned center of symmetry. The map $\sigma$ should be seen as a~way of representing a~$1$-codimensional family of affine hyperplanes passing through $p$. However, it implicitly imposes some topological constraints that will eventually prove crucial. The Lipschitz continuity condition has a~deep geometric significance and is not purely technical, as it might seem (cf. \thref{thm:04,thm:05}). We intentionally do not formulate the orthogonal variant of \thref{thm:16}, as it would require a~new, different argument, which we will not give here.\\

This time, the analogous theorem for the group of rotations has a~stronger conclusion and, moreover, can be proved in arbitrary dimension:

\begin{theorem}[cf. \thref{intro:04}]\thlabel{thm:02}
Let $K\subset\mathbb R^n$, $n\geq 4$, be a~convex body, let $p\in\Int K$ be any point in the interior of $K$, let $T\in\Gr_{n-1}(\mathbb R^n)$ be any hyperplane and let $\sigma:\Gr_{n-2}(T)\to\Gr_{n-1}(\mathbb R^n)$ be a~continuous map such that $\sigma(H)\cap T=H$ for all $H\in\Gr_{n-2}(T)$. If every intersection $K\cap(\sigma(H)+p)$ is invariant under the action of a~group affinely conjugate to $\rho_1$ with hyperplane of affine revolution $H$ and fixing $p$, then $K$ is a~body of affine $1$-revolution, with the axis of $1$-revolution passing through $p$.
\end{theorem}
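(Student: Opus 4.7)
The plan is to implement the central-projection reduction scheme sketched in \sref{sec:07} and combine it with a classical ellipsoid characterization. For each $H \in \Gr_{n-2}(T)$, let $L_H$ denote the $1$-dimensional affine axis of $\rho_1$-revolution of $K \cap (\sigma(H)+p)$; it passes through $p$ and has direction in $\sigma(H) \setminus H$, hence transversal to $T+p$. Given any affine hyperplane $T' \parallel T$ with $T' \neq T+p$, the slice $T' \cap (\sigma(H)+p)$ is parallel to the hyperplane of revolution $H+p$, so the $\rho_1$-action on $\sigma(H)+p$ restricts to an $\O(n-2)$-action on this $(n-2)$-dimensional slice fixing exactly the single point $L_H \cap T'$; hence $K \cap T' \cap (\sigma(H)+p)$ is an ellipsoid centered at $L_H \cap T'$. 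Now fix an affine hyperplane $T_*$ parallel to $T$ and avoiding $p$, and let $\phi:\mathbb R^n\setminus(T+p)\to T_*$ be the central projection from $p$. For $T'_\tau := T+p+\tau\boldsymbol n(T)$ the convex body $K_\tau := \phi(K \cap T'_\tau) \subset T_*$ satisfies $\phi(K \cap T'_\tau \cap (\sigma(H)+p)) = K_\tau \cap H_*$ with $H_* := T_* \cap (\sigma(H)+p)$, and $\phi(L_H \cap T'_\tau) = L_H \cap T_* =: q_*(H)$ depends only on $H$; thus each $K_\tau \cap H_*$ is an ellipsoid in $H_*$ centered at the common point $q_*(H)$, independently of $\tau$.

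Since the continuous assignment $H \mapsto H_*$ provides one affine hyperplane of $T_*$ per direction of $\vec T$, the Bianchi--Gruber theorem (\thref{thm:20}) applied inside $T_*$ shows that each $K_\tau$ is itself an ellipsoid. To upgrade this to global homothety of the family $\{K_\tau\}$, after an affine automorphism of $\mathbb R^n$ preserving the foliation $\{T'_\tau\}$ I arrange that at a reference $\tau_0$, $K_{\tau_0}$ is the unit ball $\mathbb B^{n-1}$ centered at the origin of $T_*$. Writing $K_\tau = c_\tau + A_\tau(\mathbb B^{n-1})$ with $A_\tau$ symmetric positive-definite, the ellipsoid-intersection formula reads $q_*(H) - c_\tau = \mu(\tau,H)\,A_\tau^2\,\boldsymbol m(H)$, where $\boldsymbol m(H)$ is the unit normal to $H_*$. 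Evaluating at $\tau_0$ gives $q_*(H) \parallel \boldsymbol m(H)$, so $A_\tau^2 q_*(H) \parallel q_*(H) - c_\tau$ for every $H$. Because the unit normals $\boldsymbol m(H)$ exhaust all directions in $\vec T$, the image $\{q_*(H)\} \subset T_*$ is a topological $(n-2)$-manifold linearly spanning $\vec T$. If $c_\tau \neq 0$, the locus of $q$ satisfying this parallelism condition is a $1$-dimensional rational curve, which for $n \geq 4$ cannot contain the $(n-2)$-dimensional image; hence $c_\tau = 0$, and then $A_\tau^2 q \parallel q$ on a linearly spanning set forces $A_\tau^2$ to be a scalar multiple of the identity. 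Thus every $K_\tau$ is a concentric ball, and pulling back through $\phi^{-1}|_{T'_\tau}$ (which in the adapted coordinates is scalar multiplication by $\tau$), each slice $K \cap T'_\tau$ becomes an ellipsoid homothetic to a fixed one, with center on a straight line $L$ through $p$ not contained in $T+p$; undoing the affine normalization, this is precisely the defining property of a body of affine $1$-revolution with axis $L$.

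The main obstacle is the eigenvector step just described, in which one extracts from the concentric-sections data that all $K_\tau$ share a single center and are mutually homothetic. The sketched argument requires the image of $q_*$ to linearly span $T_*$, which forces the degenerate case $q_* \equiv 0$ --- equivalently, every $L_H$ coincides with a single fixed line $L$ through $p$ --- to be treated separately. In that situation every $\sigma(H)+p$ contains $L$, and each section $K \cap (\sigma(H)+p)$ is $\O(n-2)$-invariant about $L$; since $\O(n-1)$ is generated by its subgroups $\O(n-2)$ stabilizing $(n-2)$-dimensional subspaces of $L^\perp$, a pointwise lifting argument (using $n \geq 4$ so that any pair of vectors in $L^\perp$ lies in a common $(n-2)$-dimensional subspace) recovers full $\O(n-1)$-invariance of $K$ about $L$.
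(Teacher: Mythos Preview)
Your central-projection reduction and the observation that for each fixed $H$ the slices $K_\tau\cap H_*$ share the common center $q_*(H)$ are correct and match the paper's setup. The genuine gap is your application of Bianchi--Gruber (\thref{thm:20}) to \emph{every} $K_\tau$. That theorem requires a section in every direction that actually meets the body; but the hyperplanes $H_*=(\sigma(H)+p)\cap T_*$ sit at \emph{fixed} positions in $T_*$, independent of $\tau$, while $K_\tau$ shrinks to a point as $T'_\tau$ approaches the boundary of $K$. Thus for $|\tau|$ large there will be directions $H$ for which $H_*$ misses $K_\tau$ entirely, and the hypothesis of \thref{thm:20} fails. The paper flags precisely this obstruction in its concluding notes: one is ``almost never able to apply any known result to each convex body $K_\tau$ individually.''

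The paper's proof sidesteps the issue by applying Bianchi--Gruber to a \emph{single} reference slice $K_{\tau_*}$, with $\tau_*$ chosen so that a blunt cone about $T^\perp$ (which meets every $\sigma(H)$, by continuity of $\sigma$ on the compact $\Gr_{n-2}(T)$) already lies inside $K$ at height $\tau_*$; this guarantees every $H_*$ meets $K_{\tau_*}$. Once $K_{\tau_*}$ is known to be an ellipsoid centered at some $c_*$, the paper finds (via an odd continuous $\delta$ vanishing on $\mathbb S^{n-2}$) an $H_*$ with $c_*\in\sigma(H_*)+p$, and for an arbitrary $\tau$ and an arbitrary line $L\ni c_*$ finds another $H$ with $L\subset\sigma(H)+p$. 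The two ellipsoidal sections $K_\tau\cap(\sigma(H_*)+p)$ and $K_\tau\cap(\sigma(H)+p)$ are each concentric and homothetic to the corresponding sections of $K_{\tau_*}$ at $c_*$, and the two dilation ratios must coincide because the sections overlap on $(\sigma(H_*)+p)\cap(\sigma(H)+p)$. Letting $L$ vary shows $K_\tau$ is globally homothetic to $K_{\tau_*}$ from $c_*$ --- without ever assuming $K_\tau$ was an ellipsoid to begin with. Your eigenvector/spanning argument thus becomes unnecessary; note also that its premise (that the image of $q_*$ is an $(n-2)$-manifold spanning $\vec T$) would itself require justification, since $q_*(H)$ vanishes whenever $H_*$ passes through the center of $K_{\tau_0}$, and the map $H\mapsto q_*(H)$ need not be immersive.
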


Finally, we propose one more generalization of \thref{thm:08}, which extends the orthogonal variant of \thref{thm:14} to any $1\leq k<n-1$:

\begin{theorem}[cf. \thref{intro:05}]\thlabel{thm:22}
Let $K\subset\mathbb R^n$, $n\geq 3$, be a~convex body, and let $p\in\mathbb R^n$ be any point of the ambient space. If $p$ is a~$(-k)$-aligned quasi-center of $k$-reflection for $K$, $1\leq k<n-1$, then $K$ is a~body of $k$-revolution, with hyperaxis of $k$-revolution passing through $p$.
\end{theorem}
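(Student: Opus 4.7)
The plan is to identify the candidate hyperaxis as $A := p + T^{\perp}$, where $T$ is the $(n-k)$-dimensional linear subspace from the definition of $(-k)$-aligned quasi-center, and to prove that $K$ is invariant under the full group $\O(T) \cong \O(n-k)$ of rotations of $T$ fixing $A$ pointwise. Since such rotations preserve every parallel translate $T' = T + a$ (with $a \in T^{\perp}$) as well as every Euclidean ball centered on $A$, it suffices to show that for every such $T'$ meeting $K$, the slice $K \cap T'$ is a Euclidean ball in $T'$ centered at $a_0 := T' \cap A$; take $p = 0$ for notational convenience.

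For every affine hyperplane $H \ni p$ with $T \not\subset H$, the hypothesis provides an orthogonal reflection $r_H$ of $K \cap H$ whose $(-1)$-eigenspace is the full $(n-k-1)$-dimensional subspace $T \cap H$ (by matching of dimensions) and whose $(+1)$-eigenspace is $A_H := (T \cap H)^{\perp_H}$, a $k$-dimensional subspace of $H$. Since the quotient action on $H / (T \cap H)$ is trivial, $r_H$ preserves every coset of $T \cap H$ in $H$; in particular $T' \cap H$ is invariant and $r_H|_{T' \cap H}$ is the point reflection through the unique fixed point $q_H^{T'} := A_H \cap T'$. Hence $K \cap (T' \cap H)$ is centrally symmetric about $q_H^{T'}$, and since every affine hyperplane of $T'$ arises as $T' \cap H$ for some such $H$, every affine hyperplane section of $K \cap T'$ is centrally symmetric.

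The key computation is to identify $q_H^{T'}$ geometrically inside $T'$. Writing the unit normal as $\nu_H = \nu_T + \nu_A$ with $\nu_T \in T$ and $\nu_A \in T^{\perp}$, a direct calculation yields
\[ A_H = \left\{v - \tfrac{\langle v, \nu_A \rangle}{\|\nu_T\|^2}\, \nu_T : v \in T^{\perp}\right\}, \qquad q_H^{T'} = a_0 - \tfrac{\langle a_0, \nu_A \rangle}{\|\nu_T\|^2}\, \nu_T. \]
The displacement $q_H^{T'} - a_0$ is parallel to $\nu_T$, which is exactly the Euclidean unit normal of $U := T' \cap H$ inside $T'$; hence $q_H^{T'}$ is the foot of the Euclidean perpendicular from $a_0$ to $U$ within $T'$. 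Taking $H \supset A$ (equivalently $\nu_A = 0$) gives $q_H^{T'} = a_0$, so every hyperplane section of $K \cap T'$ through $a_0$ is centered at $a_0$, forcing $a_0$ to be a center of symmetry of $K \cap T'$.

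The foot-of-perpendicular property together with central symmetry about $a_0$ now forces $K \cap T'$ to be a Euclidean ball centered at $a_0$. Indeed, for each unit vector $n \in T$ and each $c \in \mathbb R$, set $S_c := \{v \in n^{\perp_T} : a_0 + cn + v \in K\}$; the foot-of-perpendicular property makes $S_c$ centrally symmetric in $v$, while central symmetry about $a_0$ forces $S_c = -S_{-c}$, whence $S_c = S_{-c}$. This equality is exactly invariance of $K \cap T'$ under the orthogonal reflection of $T'$ through the hyperplane $a_0 + n^{\perp_T}$. As $n$ varies over unit vectors of $T$, these reflections generate $\O(T)$, so $K \cap T'$ is an $\O(n-k)$-invariant convex body around $a_0$, hence a Euclidean ball centered at $a_0$. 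Assembling slices across $a \in T^{\perp}$ yields $\O(T)$-invariance of $K$ with fixed subspace $A$, so $K$ is a body of $k$-revolution with hyperaxis $A = p + T^{\perp}$. The main obstacle is the tilt computation in the third paragraph, where the orthogonal structure of the reflections is essential in matching $A_H$ with the Euclidean foot of perpendicular in $T'$; the measure-zero family of hyperplanes $H \supset T$ is handled by continuity of the reflection data.
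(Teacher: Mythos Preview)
Your proof is correct and follows the same overall plan as the paper's (which deduces the result from the slightly more general Theorem~\ref{thm:21} by taking $P=\{p\}$): identify the candidate hyperaxis as $A=p+T^\perp$ and then verify that $K$ is invariant under every orthogonal reflection through a hyperplane containing $A$. The executions differ in organization. The paper fixes a target hyperplane $H_*\supset A$, picks a generic line $L\perp H_*$ meeting it at $p_2$, and synthetically constructs a single hyperplane $H\ni p$ for which the given reflection $r_H$ has $p_2$ on its hyperaxis and $L$ parallel to its hyperplane of reflection, so that $K\cap L$ is symmetric about $p_2$; varying $L$ and then $H_*$ gives the full $\rho_k$-invariance directly. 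You instead slice $K$ by translates $T'=T+a$, compute explicitly that the center $q_H^{T'}$ of every hyperplane section of $K\cap T'$ is the Euclidean foot of perpendicular from $a_0=T'\cap A$, and then combine this with the central symmetry of $K\cap T'$ about $a_0$ (obtained from the hyperplanes $H\supset A$) to deduce that each slice is a Euclidean ball about $a_0$. Your route passes through an extra intermediate characterization (foot-of-perpendicular centers) that the paper bypasses, and is slightly more computational, but it yields exactly the same reflection invariances; neither approach gains materially over the other. One small point you leave implicit: for the slice $T'=T$ (i.e.\ $a=0$) your parametrization of hyperplanes of $T'$ degenerates, since every $T'\cap H$ then passes through $p$; this slice is covered by continuity from the neighboring slices.
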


\noindent It is, in fact, an immediate consequence of an even more general theorem:

\begin{theorem}\thlabel{thm:21}
Let $K\subset\mathbb R^n$, $n\geq 3$, be a~convex body, and let $P\subset\mathbb R^n$ be any affine subspace. If there exists an affine subspace $T\supset P$ satisfying $k\equalscolon\dim(P+\vec T^\perp)<n-1$ and such that every intersection $K\cap H$ of $K$ with an affine hyperplane $H\supset P$ is invariant under the action of a~group conjugate to $\pi_k$ and fixing $P$, with hyperplane of reflection contained in $T\cap H$, then $K$ is a~body of $k$-revolution about the hyperaxis $P+\vec T^\perp$.
\end{theorem}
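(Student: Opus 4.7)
The plan is to set up orthogonal coordinates adapted to $T$ and $P$, pin down explicitly the reflection $R_H$ attached to each hyperplane $H\supset P$, derive a clean formula for $R_H(q)$, and show that as $H$ varies among hyperplanes through $P$ containing a fixed $q\in K$, the images $R_H(q)$ sweep out the entire orbit of $q$ under the intended $\O(n-k)$-action. After translating so that $P$ contains the origin, I decompose $\mathbb R^n=V_1\oplus V_2\oplus V_3$ orthogonally with $V_1\colonequals\vec P$, $V_2\colonequals\vec T\cap V_1^\perp$ and $V_3\colonequals\vec T^\perp$; then $A_0=P+\vec T^\perp=V_1\oplus V_3$ and $A_0^\perp=V_2$. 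I tacitly assume the non-degenerate case $V_3\neq 0$.

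Given a hyperplane $H\supset P$ with unit normal $\boldsymbol n_H=\boldsymbol n+\boldsymbol m$, $\boldsymbol n\in V_2$ and $\boldsymbol m\in V_3$, the alignment hypothesis forces, in the generic case $\boldsymbol n\neq 0$, the fixed axis of $R_H$ to be $V_1\oplus W_H$ with
\[
W_H=\bigl\{\boldsymbol v-\|\boldsymbol n\|^{-2}(\boldsymbol m\cdot\boldsymbol v)\boldsymbol n:\boldsymbol v\in V_3\bigr\},
\]
and its $(-1)$-eigenspace to be $V_2\cap\boldsymbol n^\perp$; both are uniquely determined by dimension counting together with the condition that the $(-1)$-eigenspace lie inside $T\cap H=V_1\oplus(V_2\cap\vec H)$. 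In particular, when $\boldsymbol m=0$ (equivalently $H\supset A_0$), the axis is exactly $A_0$.

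A direct projection computation then yields, for any $q=q_1+q_2+q_3\in K\cap H$ and using the orthogonality $\boldsymbol n_H\perp q$, the remarkably clean formula
\[
R_H(q)=q_1+q_3+R_{\hat{\boldsymbol n}}(q_2),\qquad R_{\hat{\boldsymbol n}}(\boldsymbol v)\colonequals 2(\hat{\boldsymbol n}\cdot\boldsymbol v)\hat{\boldsymbol n}-\boldsymbol v,
\]
where $\hat{\boldsymbol n}=\boldsymbol n/\|\boldsymbol n\|$ and $R_{\hat{\boldsymbol n}}$ is the orthogonal reflection of $V_2$ through the axis $\mathbb R\hat{\boldsymbol n}$. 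Thus $R_H$ preserves the $A_0$-component of $q$ and reflects the $V_2$-component through a line freely specified by the direction of $\boldsymbol n$. For $q\in K$ with $q_3\neq 0$ and any prescribed $\hat{\boldsymbol n}\in V_2$, the relation $\boldsymbol n\cdot q_2+\boldsymbol m\cdot q_3=0$ has a solution $\boldsymbol m\in V_3$, so an admissible $H$ exists and $q_1+q_3+R_{\hat{\boldsymbol n}}(q_2)\in K$. As $\hat{\boldsymbol n}$ ranges over the unit sphere of $V_2$, the point $R_{\hat{\boldsymbol n}}(q_2)$ covers the whole sphere $\{\boldsymbol v\in V_2:\|\boldsymbol v\|=\|q_2\|\}$ — for any target $\boldsymbol p$ on that sphere one reads off the direction as $\hat{\boldsymbol n}=(\boldsymbol p+q_2)/\|\boldsymbol p+q_2\|$ when $\boldsymbol p\neq-q_2$, and any $\hat{\boldsymbol n}\perp q_2$ otherwise. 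Points $q\in K$ with $q_3=0$ are handled by approximating them from the interior by $q^{(j)}\in\Int K$ with $q_3^{(j)}\neq 0$ and passing to the limit, using that $K$ is closed. Hence the full $\O(V_2)$-orbit of every $q\in K$ lies in $K$, which is the defining property of a body of $k$-revolution about the hyperaxis $A_0=P+\vec T^\perp$.

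The main obstacle is the first step: identifying $R_H$ uniquely and establishing the explicit formula for its axis from the alignment hypothesis. Once that is done, both the orbit computation and the closure argument are routine.
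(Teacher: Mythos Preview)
Your proof is correct and takes a route that is recognizably different from the paper's, though both rest on the same core identification of the $(-1)$-eigenspace of $R_H$ as $\vec P^\perp\cap\vec T\cap\vec H$. The paper argues synthetically: for each hyperplane $H_*\supset P+\vec T^\perp$ it picks a generic line $L\perp H_*$, constructs one particular hyperplane $H\supset P$ containing $L$, verifies that $L$ is parallel to the hyperplane of reflection of $R_H$ and meets its hyperaxis, and concludes that $K\cap L$ (hence $K$) is invariant under reflection through $H_*$; invariance under the full $\O(n-k)$ then follows because such hyperplane reflections generate it. You instead set up adapted coordinates, compute $R_H$ in closed form, and observe that as $H$ varies the $V_2$-component of $R_H(q)$ already sweeps the entire sphere of radius $\|q_2\|$ --- so the full orbit is obtained directly, with no appeal to a generating set. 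Your formula $R_H(q)=q_1+q_3+R_{\hat{\boldsymbol n}}(q_2)$ is illuminating and makes the mechanism transparent; the paper's argument is coordinate-free but requires more geometric bookkeeping to see why its specific $H$ has the right properties. Your tacit restriction to $V_3\neq 0$ matches the paper's own case split (it disposes of $\dim T=n$ in its first sentence), and your closure argument for points with $q_3=0$ is a clean substitute for the paper's genericity-and-density step.
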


\noindent Note that the affine subspace $P$ plays here the role of a~point $p$ in the definition of alignment with respect to the affine subspace $T$.

\section{Passing from the affine to the orthogonal variant}\label{sec:06}

Although it is usually difficult to prove the logical equivalence between an affine and an orthogonal variant of \thref{con:04,con:05}, there are situations when it is actually possible. In this section, we will give such an example:

\begin{proposition}\thlabel{lem:01}
Let $K\subset\mathbb R^n$, $n\geq 4$, be a~convex body, and let $p\in\mathbb R^n$ be any point of the ambient space. If $p$ is a~pseudo-center of $1$-reflection for $K$ and $K$ itself is a~body of affine $1$-revolution, then $K$ is actually a~body of $1$-revolution or an ellipsoid.
\end{proposition}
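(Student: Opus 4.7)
Since $K$ is a body of affine $1$-revolution, there exists an affine isomorphism $g$ such that $g(K)$ is a body of orthogonal $1$-revolution. After translating so that $p=\boldsymbol 0$ and applying polar decomposition $g=RA$ with $R\in\O(n,\mathbb R)$ orthogonal and $A$ symmetric positive definite, replacing $g$ by $A$ makes $\tilde K\colonequals A(K)=R^{-1}(g(K))$ a body of orthogonal $1$-revolution about an axis $\ell_1\ni\boldsymbol 0$. The plan is to split into two cases: either $A|_{\ell_1^\perp}$ is a scalar multiple of the identity — whence $A$ commutes with every rotation about $\ell_1$ and $K=A^{-1}(\tilde K)$ inherits the orthogonal $\O(n-1,\mathbb R)$-invariance of $\tilde K$ — or else $\tilde K$ is a Euclidean ball and $K=A^{-1}(\tilde K)$ is an ellipsoid.

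For each hyperplane $H\ni\boldsymbol 0$, I first make the revolution-induced affine symmetry on $K\cap H$ explicit. Writing the unit normal to $A(H)$ as $\nu=\alpha\hat\ell_1+\beta v$ with $v\in\ell_1^\perp\cap\mathbb S^{n-1}$ and $\alpha^2+\beta^2=1$, the stabilizer of $A(H)$ inside the revolution group of $\tilde K$ is the copy of $\O(n-2,\mathbb R)$ fixing $v$; its fixed line in $A(H)$ is spanned by $u\colonequals-\beta\hat\ell_1+\alpha v$ and its transverse acted-upon subspace is $W\colonequals\langle\hat\ell_1,v\rangle^\perp$. Pulling back by $A^{-1}$, the section $K\cap H$ becomes a body of affine $1$-revolution about $\langle A^{-1}u\rangle\subset H$ with transverse direction $A^{-1}(W)$.

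The hypothesis supplies an orthogonal involution $\sigma_H$ on $H$, fixing a $1$-dimensional line and negating its $(n-2)$-dimensional orthogonal complement, that preserves $K\cap H$. I would then argue that, \emph{generically} in $H$, the full affine symmetry group of $K\cap H$ coincides with the revolution-induced $\O(n-2,\mathbb R)$ above, inside which the unique involution with a $1$-dimensional fixed subspace is the half-turn $\mathrm{id}|_{\langle A^{-1}u\rangle}\oplus(-\mathrm{id})|_{A^{-1}(W)}$. Identifying $\sigma_H$ with this half-turn, Euclidean orthogonality of the decomposition $H=\langle A^{-1}u\rangle\oplus A^{-1}(W)$ translates to $\langle A^{-1}u,A^{-1}w\rangle=0$ for all $w\in W$, equivalently $A^{-2}u\in W^\perp=\langle\hat\ell_1,v\rangle$. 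Letting $(\alpha,\beta,v)$ range freely, the specialization $\alpha=0$ forces $A^{-2}\hat\ell_1\in\bigcap_v\langle\hat\ell_1,v\rangle=\langle\hat\ell_1\rangle$, so that $\hat\ell_1$ is an eigenvector of $A$; since $A$ then preserves $\ell_1^\perp$, the specialization $\beta=0$ gives $A^{-2}v\in\langle\hat\ell_1,v\rangle\cap\ell_1^\perp=\langle v\rangle$ for every $v\in\ell_1^\perp$, making $A|_{\ell_1^\perp}$ a scalar. This places us in the orthogonal revolution case.

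The principal obstacle is to rigorously justify the ``generic'' qualification above — that is, to rule out the possibility that $\sigma_H$ is an accidental symmetry of $K\cap H$ lying outside the revolution-induced $\O(n-2,\mathbb R)$. I envisage a case split: if such accidental $\sigma_H$ appear only on a nowhere-dense set of hyperplanes, then the constraint $A^{-2}u\in\langle\hat\ell_1,v\rangle$ extends by continuity to all $H$ and we still land in the revolution case; otherwise, the enlarged affine symmetry group of $K\cap H$ on an open family of directions forces the cross-sections $\tilde K\cap A(H)$ to themselves be ellipsoids, which via \thref{thm:23} — applied to a suitable auxiliary family of sections through an interior point of $\tilde K$ — propagates to $\tilde K$, and hence to $K=A^{-1}(\tilde K)$, being an ellipsoid. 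This disentangling of the revolution and ellipsoid regimes is the subtlest step of the argument.
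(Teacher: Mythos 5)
Your proposal takes essentially the same route as the paper's proof: conjugate $K$ by a positive-definite symmetric $A$ (the paper instead reduces to a diagonal $A$ via the SVD) so that $\tilde K=AK$ is a body of orthogonal $1$-revolution; describe the revolution-induced affine $\rho_1$-symmetry on each section $K\cap H$ with axis $\langle A^{-1}u\rangle$ and transverse $A^{-1}(W)$; impose that the hypothesized orthogonal $\pi_1$-involution $\sigma_H$ generically coincides with the pulled-back half-turn, yielding the orthogonality relation $A^{-2}u\in\langle\hat\ell_1,v\rangle$; and specialize to conclude that $A$ fixes $\hat\ell_1$ and is scalar on $\ell_1^\perp$. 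Your specializations $\alpha\to0$ then $\beta\to0$ give a slightly cleaner path through the revolution branch than the paper's eigenvector casework.

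That said, you flag and then leave open the two steps where the real content lies, and as written they remain gaps. First, the ``generic'' claim --- that an accidental $\sigma_H$ outside the revolution-induced group forces $K\cap H$ to be an ellipsoid --- is exactly the fact the paper obtains from \cite[Lemma~4]{10.2307/1968975}: if a convex body carries a group $G_1$ conjugated to $\rho_1$ and a group $G_2$ orthogonally conjugated to $\pi_1$ with $G_1\ngtr G_2$, it is an ellipsoid. This is also where the hypothesis $n\geq4$ enters (the lemma fails in the plane, as the paper's remark notes), so it cannot be waved away; without this lemma, or a fresh proof of it, your ``I would then argue'' step does not close. Second, your case split is too coarse to execute the ellipsoid branch: knowing the sections are ellipsoids on an open family of directions is not enough to invoke \thref{thm:23}, which needs all sections through $p$. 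The paper achieves the correct dichotomy by observing that the orthogonality constraint is determinantal in the section normal $\boldsymbol h$, so its solution set on $\mathbb S^{n-1}$ is either the whole sphere or nowhere dense; in the latter case ellipsoidality holds on a dense set, extends to all sections by closedness of the ellipsoid property, and then Kubota's characterization finishes. You would need to make this algebraic dichotomy explicit to rescue the ellipsoid branch of your argument.
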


\begin{proof}
Since $K$ is a~body of affine $1$-revolution, there exists a~linear map $\boldsymbol A\in\GL(\mathbb R^n)$ such that $\boldsymbol AK$ is a~body of $1$-revolution. Again, without loss of generality, we may assume that $\boldsymbol A$ is diagonal. Indeed, it admits a~singular value decomposition $\boldsymbol A=\boldsymbol U\boldsymbol D\boldsymbol V^\top$, where $\boldsymbol U,\boldsymbol V\in\O(\mathbb R^n)$ and $\boldsymbol D$ is a~diagonal matrix with non-negative entries, and we can consider $\boldsymbol U^\top\boldsymbol A$ instead of $\boldsymbol A$ and $\boldsymbol V^\top K$ instead of $K$.\\

Denote by $L\in\Gr_1(\mathbb R^n)$ the axis of affine $1$-revolution for $K$ and let $H+p$ be any affine hyperplane passing through $p$. Clearly, $P_2\colonequals\boldsymbol A^{-\top}L^\perp\cap\boldsymbol AH$ is the hyperplane of $1$-revolution and $\Lambda_2\colonequals\langle\boldsymbol AL,\boldsymbol A^{-\top}H^\perp\rangle\cap\boldsymbol AH$ is the direction of the axis of $1$-revolution for $\boldsymbol AK\cap(\boldsymbol AH+\boldsymbol Ap)$ (unless $\boldsymbol AL\perp\boldsymbol AH$, when the intersection is an ellipsoid). Hence $P_1\colonequals(\boldsymbol A^\top\boldsymbol A)^{-1}L^\perp\cap H$ is the hyperplane of affine $1$-revolution and $\Lambda_1\colonequals\langle L,(\boldsymbol A^\top\boldsymbol A)^{-1}H^\perp\rangle\cap H$ is the direction of the axis of affine $1$-revolution for $K\cap(H+p)$ (again, unless $\boldsymbol AL\perp\boldsymbol AH$, when the intersection is an ellipsoid).\\

Denote by $G_1<\GA(H+p)$ the corresponding symmetry group, affinely conjugate to $\rho_1$. By assumption, $K\cap(H+p)$ is invariant under the action of a~group $G_2<\GA(H+p)$, orthogonally conjugate to $\pi_1$. If $G_1\ngtr G_2$, then by \cite[Lemma~4]{10.2307/1968975}, $K\cap(H+p)$ is an ellipsoid. Suppose then that the intersection is not an ellipsoid. In particular, that $G_1>G_2$. In this case, $\Lambda_1$ must be perpendicular to $P_1$, i.e., equal to $P_1^\perp\cap H$.\\

Denote $L\equalscolon\langle\boldsymbol l\rangle$ and $H\equalscolon\langle\boldsymbol h\rangle^\perp$, where $\boldsymbol l,\boldsymbol h\in\mathbb S^{n-1}$ are unit vectors. The latter condition reads
\begin{equation}\label{eq:15}\langle\boldsymbol l,(\boldsymbol A^\top\boldsymbol A)^{-1}\boldsymbol h\rangle\cap H=\langle\boldsymbol A^\top\boldsymbol A\boldsymbol l,\boldsymbol h\rangle\cap H.\end{equation}
It is easy to see that the left-hand side of \eqref{eq:15} is spanned by
$$((\boldsymbol A^\top\boldsymbol A)^{-1}\boldsymbol h\cdot\boldsymbol h)\boldsymbol l-(\boldsymbol l\cdot\boldsymbol h)(\boldsymbol A^\top\boldsymbol A)^{-1}\boldsymbol h,$$
whereas the right-hand side of \eqref{eq:15} is spanned by
$$\boldsymbol A^\top\boldsymbol A\boldsymbol l-(\boldsymbol A^\top\boldsymbol A\boldsymbol l\cdot\boldsymbol h)\boldsymbol h.$$
Now, the condition
\begin{equation}\label{eq:16}((\boldsymbol A^\top\boldsymbol A)^{-1}\boldsymbol h\cdot\boldsymbol h)\boldsymbol l-(\boldsymbol l\cdot\boldsymbol h)(\boldsymbol A^\top\boldsymbol A)^{-1}\boldsymbol h\parallel\boldsymbol A^\top\boldsymbol A\boldsymbol l-(\boldsymbol A^\top\boldsymbol A\boldsymbol l\cdot\boldsymbol h)\boldsymbol h\end{equation}
can be expressed in terms of polynomial (more precisely, determinantal) equations. Hence, the subset of those $\boldsymbol h\in\mathbb S^{n-1}$ for which \eqref{eq:16} is satisfied is either the whole hypersphere or a~nowhere dense subset of the hypersphere.\\

In the first case, let $\boldsymbol h$ be an eigenvector of $\boldsymbol A^\top\boldsymbol A$ with eigenvalue $\lambda\in\mathbb R$. Then \eqref{eq:16} simplifies to
$$\boldsymbol l-(\boldsymbol l\cdot\boldsymbol h)\boldsymbol h\parallel\boldsymbol A^\top\boldsymbol A\boldsymbol l-(\boldsymbol l\cdot\boldsymbol h)\lambda\boldsymbol h=\boldsymbol A^\top\boldsymbol A(\boldsymbol l-(\boldsymbol l\cdot\boldsymbol h)\boldsymbol h).$$
It means that either $\boldsymbol l=\boldsymbol h$ is itself an eigenvector of $\boldsymbol A^\top\boldsymbol A$ or $\boldsymbol l-(\boldsymbol l\cdot\boldsymbol h)\boldsymbol h\neq\mathbf 0$ is an eigenvector of $\boldsymbol A^\top\boldsymbol A$ for every choice of $\boldsymbol h$.\\

In the first subcase, observe that $\langle\boldsymbol l\rangle^\perp$ is an invariant subspace of $\boldsymbol A$. For a~normalized vector $\boldsymbol h\colonequals\frac{\boldsymbol l+\boldsymbol u}{1+\|\boldsymbol u\|^2}$, where $\boldsymbol u\in\langle\boldsymbol l\rangle^\perp\setminus\{\mathbf 0\}$ is an arbitrary non-zero vector, the condition \eqref{eq:16} yields
$$((\boldsymbol A^\top\boldsymbol A)^{-1}\boldsymbol u\cdot\boldsymbol u)\boldsymbol l-(\boldsymbol A^\top\boldsymbol A)^{-1}\boldsymbol u\parallel\|\boldsymbol u\|^2\boldsymbol l-\boldsymbol u,$$
or, equivalently,
$$\boldsymbol l-\|\boldsymbol A^{-\top}\boldsymbol u\|^{-2}(\boldsymbol A^\top\boldsymbol A)^{-1}\boldsymbol u\parallel\boldsymbol l-\|\boldsymbol u\|^{-2}\boldsymbol u.$$
Hence
$$\|\boldsymbol A^{-\top}\boldsymbol u\|^{-2}(\boldsymbol A^\top\boldsymbol A)^{-1}\boldsymbol u=\|\boldsymbol u\|^{-2}\boldsymbol u$$
for every non-zero vector $\boldsymbol u\in\langle\boldsymbol l\rangle^\perp\setminus\{\mathbf 0\}$, which means that $\langle\boldsymbol l\rangle^\perp$ is actually an eigenspace of $\boldsymbol A$. But then it follows that $K=\boldsymbol A^{-1}(\boldsymbol AK)$ is indeed a~body of $1$-revolution, which concludes the proof.\\

In the second subcase, observe that there is an orthonormal basis $(\boldsymbol e_1,\boldsymbol e_2,\ldots,\boldsymbol e_n)$ of $\mathbb R^n$, consisting of eigenvectors of $\boldsymbol A^\top\boldsymbol A$ with eigenvalues $\lambda_1,\lambda_2,\ldots,\lambda_n$, respectively. By assumption, we know that $\boldsymbol u\colonequals\boldsymbol l-(\boldsymbol l\cdot\boldsymbol e_1)\boldsymbol e_1$ is an eigenvector of $\boldsymbol A^\top\boldsymbol A$ with eigenvalue $\lambda$. Since $\boldsymbol u$ is orthogonal to $\boldsymbol e_1$, it may be written as a~linear combination $a_2\boldsymbol e_2+a_3\boldsymbol e_3+\cdots+a_n\boldsymbol e_n$. Moreover, $a_i\neq 0$, since otherwise $\boldsymbol l-(\boldsymbol l\cdot\boldsymbol e_i)\boldsymbol e_i=\boldsymbol l$ would be an eigenvector of $\boldsymbol A^\top\boldsymbol A$, a~contradiction. Hence
$$a_2\lambda_2\boldsymbol e_2+a_3\lambda_3\boldsymbol e_3+\cdots+a_n\lambda_n\boldsymbol e_n=\boldsymbol A^\top\boldsymbol A\boldsymbol u=\lambda\boldsymbol u=a_2\lambda\boldsymbol e_2+a_3\lambda\boldsymbol e_3+\cdots+a_n\lambda\boldsymbol e_n,$$
which implies $\lambda_2=\lambda_3=\cdots=\lambda_n=\lambda$. Applying the same argument to $\boldsymbol e_2$ instead of $\boldsymbol e_1$ yields that $\boldsymbol A^\top\boldsymbol A=\lambda\mathbf I_n$. In particular, $\boldsymbol l$ is an eigenvector of $\boldsymbol A^\top\boldsymbol A$, a~contradiction.\\

Finally, in the second case, observe that $p$ is a~pseudo-center of affine $0$-revolution for $K$. But then, by \thref{thm:17}, $K$ is an ellipsoid, which concludes the proof.
\end{proof}

\begin{remark}
It is natural to ask whether \thref{lem:01} also holds for $n=3$. The only element of the proof that relies on the assumption $n\geq 4$ is \cite[Lemma~4]{10.2307/1968975}, which happens to fail for $n=3$. Indeed, a~$2$-dimensional convex body can have many axes of affine reflection without being an ellipse. Although we do believe that \thref{lem:01} can be extended, in our attempts to replace \cite[Lemma~4]{10.2307/1968975}, however, we are quickly getting on shaky ground. The case when all the intersections $K\cap H$ of $K$ with affine planes $H\in\Gr_2(\mathbb R^3)+p$ passing through $p$ admit more than one axis of affine reflection does not yield that $K$ is an ellipsoid anymore (the assumption is satisfied by, for example, a~symmetric lens centered at $p$). Hence \thref{lem:01} for $n=3$ becomes more geometric in nature than its purely algebraic variant for $n\geq 4$.
\end{remark}

If $K\subset\mathbb R^n$ is an ellipsoid, then, by the Principal Axis Theorem, every point $p\in\mathbb R^n$ of the ambient space is a~pseudo-center of $k$-reflection for $K$, $0\leq k<n-1$. Interestingly, the orthogonal variant of \thref{con:05} is non-trivial even if we \emph{a priori} assume that $K$ is an ellipsoid. The following proposition also explains why there is no dichotomy in the conclusion of \thref{con:05}, unlike the previous \thref{con:04}:

\begin{proposition}\thlabel{lem:07}
Let $K\subset\mathbb R^n$, $n\geq 3$, be an ellipsoid, and let $p\in\mathbb R^n$ be any point of the ambient space. If $p$ is a~pseudo-center of $k$-revolution for $K$, $0\leq k<n-2$, then $K$ itself is a~body of $k$-revolution.
\end{proposition}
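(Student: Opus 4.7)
The plan is to reduce the proposition to a purely linear algebraic statement about compressions of symmetric matrices. After a translation, write $K=\{\boldsymbol x\in\mathbb R^n:\boldsymbol x^\top\boldsymbol Q\boldsymbol x\leq 1\}$ for some symmetric positive definite matrix $\boldsymbol Q$. For any unit vector $\boldsymbol\xi\in\mathbb S^{n-1}$, the affine hyperplane $H=\langle\boldsymbol\xi\rangle^\perp+p$ passes through $p$, and a short calculation with the defining quadratic form shows that the ellipsoid $K\cap H\subset H$ is governed, up to translation to its own center, by the compression of $\boldsymbol Q$ to $\langle\boldsymbol\xi\rangle^\perp$, whose spectrum depends only on $\boldsymbol\xi$ and not on $p$. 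Since an ellipsoid in a hyperplane of dimension $n-1$ is a body of $k$-revolution precisely when its defining matrix has an eigenvalue of multiplicity at least $m\colonequals n-1-k$, and since $K$ itself is a body of $k$-revolution precisely when $\boldsymbol Q$ has an eigenvalue of multiplicity at least $m+1=n-k$, the hypothesis $k<n-2$ yields $m\geq 2$, and the proposition reduces to the following statement: if every hyperplane compression of $\boldsymbol Q$ has an eigenvalue of multiplicity at least $m\geq 2$, then $\boldsymbol Q$ itself has an eigenvalue of multiplicity at least $m+1$.

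I argue by contradiction. Suppose every eigenvalue of $\boldsymbol Q$ has multiplicity at most $m$, and let $E_1,\ldots,E_r$ be the eigenspaces with eigenvalues $\mu_1,\ldots,\mu_r$ and dimensions $m_1,\ldots,m_r$. Denote by $\{i_1,\ldots,i_s\}$ the set of indices with $m_{i_j}=m$; if $s=0$ then any unit vector $\boldsymbol\xi$ chosen inside a single eigenspace immediately produces a compression whose eigenvalue multiplicities are all at most $m-1$, already contradicting the hypothesis. Otherwise, choose nonzero $\boldsymbol v_{i_j}\in E_{i_j}$ and set $\boldsymbol\xi\colonequals\boldsymbol v_{i_1}+\cdots+\boldsymbol v_{i_s}$. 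Using pairwise orthogonality of the eigenspaces, $\langle\boldsymbol\xi\rangle^\perp$ decomposes orthogonally as
\[\langle\boldsymbol\xi\rangle^\perp=\bigoplus_{j=1}^{s}\bigl(E_{i_j}\cap\langle\boldsymbol\xi\rangle^\perp\bigr)\oplus\bigoplus_{i\notin\{i_1,\ldots,i_s\}}E_i\oplus L,\]
where $L\colonequals\langle\boldsymbol v_{i_1},\ldots,\boldsymbol v_{i_s}\rangle\cap\langle\boldsymbol\xi\rangle^\perp$ has dimension $s-1$. The compression of $\boldsymbol Q$ to $\langle\boldsymbol\xi\rangle^\perp$ is block-diagonal with respect to this decomposition, acting as scalar multiplication by $\mu_{i_j}$ on $E_{i_j}\cap\langle\boldsymbol\xi\rangle^\perp$, by $\mu_i$ on $E_i$ for $i\notin\{i_1,\ldots,i_s\}$, and as the hyperplane compression of $\Diag(\mu_{i_1},\ldots,\mu_{i_s})$ on $L$.

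The only nontrivial step, which I expect to be the main obstacle, is to ensure that the $s-1$ eigenvalues $\nu_1,\ldots,\nu_{s-1}$ of the compression on $L$ avoid every $\mu_i$. By Cauchy interlacing applied inside $\langle\boldsymbol v_{i_1},\ldots,\boldsymbol v_{i_s}\rangle$, each $\nu_t$ lies in a closed interval bounded by two consecutive values among the $\mu_{i_j}$'s sorted in increasing order, and as the norms $\|\boldsymbol v_{i_j}\|$ vary, each $\nu_t$ sweeps out an open subinterval by a simple analytic computation. Hence for each pair $(t,i)$, the locus $\{\nu_t=\mu_i\}$ is a proper real-analytic subvariety of the positive orthant, and a generic choice of norms avoids every such locus simultaneously. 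For such a choice, the eigenvalues of $\boldsymbol Q|_{\langle\boldsymbol\xi\rangle^\perp}$ have multiplicities $m-1$ on each $E_{i_j}\cap\langle\boldsymbol\xi\rangle^\perp$, at most $m-1$ on each $E_i$ with $i\notin\{i_1,\ldots,i_s\}$, and $1$ for each $\nu_t$, so the maximum multiplicity is at most $m-1<m$, contradicting the hypothesis. Therefore some eigenvalue of $\boldsymbol Q$ has multiplicity at least $m+1=n-k$, and $K$ is a body of $k$-revolution.
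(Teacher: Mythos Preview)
Your proof is correct and takes a genuinely different route from the paper. Both arguments share the reduction to the linear-algebraic core: if every hyperplane compression of a positive-definite $\boldsymbol Q$ has an eigenvalue of multiplicity at least $m\geq 2$, then $\boldsymbol Q$ has one of multiplicity at least $m+1$. From there the paper computes the compressed characteristic polynomial explicitly as $\sum_i u_i^2\prod_{j\neq i}(a_j-\lambda)$ and isolates a standalone lemma---if every convex combination of given polynomials has a root of multiplicity $\geq m$, the polynomials share a common such root---proved by induction via a Wronskian argument. You instead argue by contradiction, constructing a single bad direction: take $\boldsymbol\xi$ with a nonzero component in each eigenspace of maximal multiplicity $m$, decompose $\langle\boldsymbol\xi\rangle^\perp$ into the obvious eigenspace slices plus a small cross-block $L$, and use strict Cauchy interlacing on $L$ together with a genericity count to force every eigenvalue of the compression to have multiplicity at most $m-1$. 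Your approach is more elementary (no auxiliary polynomial lemma, only interlacing and a measure-zero avoidance), while the paper's route packages the key step into a reusable statement about families of polynomials. One cosmetic point: the phrase ``sweeps out an open subinterval by a simple analytic computation'' is not really what you use; the decisive observation is that the condition ``$\mu_i$ is a root of the compressed characteristic polynomial on $L$'' is a nontrivial linear equation in the $\|\boldsymbol v_{i_j}\|^2$, hence cuts out a proper subvariety, and finitely many such conditions are generically avoidable. You could drop the open-subinterval sentence and state this directly, together with the remark that strict interlacing (all components of $\boldsymbol\xi$ in the eigenbasis being nonzero) already guarantees the $\nu_t$ are pairwise distinct and distinct from every $\mu_{i_j}$.
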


\begin{proof}
We will even strengthen the theorem by assuming that every intersection $K\cap H$ of $K$ with an affine hyperplane $H$ in some open subset of $\Gr_{n-1}(\mathbb R^n)+p$ is invariant under the action of a~group conjugate to $\rho_k$. Without loss of generality, we may assume that $K$ is defined by the inequality $\boldsymbol x^\top\boldsymbol A\boldsymbol x\leq 1$, where $\boldsymbol A\in\GL(\mathbb R^n)$ is diagonal. Also, since the intersections of $K$ with any two parallel hyperplanes are similar, without loss of generality, we may assume that $p$ is the origin.\\

The intersection of $K$ with the hyperplane $\boldsymbol U\langle\boldsymbol e_n\rangle^\perp$, where $\boldsymbol U\in\O(\mathbb R^n)$, is an ellipsoid congruent to the intersection of $\boldsymbol U^{-1}K$ with the hyperplane $\langle\boldsymbol e_n\rangle^\perp$, and as such is defined by $\boldsymbol B\colonequals\boldsymbol P^\top\boldsymbol U^\top\boldsymbol A\boldsymbol U\boldsymbol P$, where $\boldsymbol P$ is the matrix of the natural embedding of $\mathbb R^{n-1}$ into $\mathbb R^n$. Hence, by assumption, the matrix $\boldsymbol B$ has an eigenvalue of multiplicity at least $n-1-k$.\\

Let us now compute the characteristic polynomial of $\boldsymbol B$. By Weinstein-Aronszajn identity (see \cite[(2)]{terrytao}), we obtain
\begin{align*}
&\det(\boldsymbol B-\lambda\mathbf I_{n-1})=\det(\boldsymbol P^\top\boldsymbol U^\top\boldsymbol A\boldsymbol U\boldsymbol P-\lambda\mathbf I_{n-1})=-\lambda^{-1}\det(\boldsymbol A\boldsymbol U\boldsymbol P\boldsymbol P^\top\boldsymbol U^\top-\lambda\mathbf I_n)\\
&\quad=-\lambda^{-1}\det(\boldsymbol A(\mathbf I_n-\boldsymbol u\boldsymbol u^\top)-\lambda\mathbf I_n)=-\lambda^{-1}\det(\boldsymbol A-\lambda\mathbf I_n-\boldsymbol A\boldsymbol u\boldsymbol u^\top)\\
&\quad=-\lambda^{-1}\det(\boldsymbol A-\lambda\mathbf I_n)\det(\mathbf I_n-(\boldsymbol A-\lambda\mathbf I_n)^{-1}\boldsymbol A\boldsymbol u\boldsymbol u^\top)=-\lambda^{-1}\det(\boldsymbol A-\lambda\mathbf I_n)(1-\boldsymbol u^\top(\boldsymbol A-\lambda\mathbf I_n)^{-1}\boldsymbol A\boldsymbol u),
\end{align*}
where $\boldsymbol u\colonequals\boldsymbol U\boldsymbol e_n$. Using the fact that $\boldsymbol A=\Diag(a_1,a_2,\ldots,a_n)$ is diagonal, we obtain that the latter is equal to
\begin{align*}
&-\lambda^{-1}\left(\prod_{i=1}^n(a_i-\lambda)\right)\left(1-\sum_{i=1}^nu_i^2\frac{a_i}{a_i-\lambda}\right)=-\lambda^{-1}\left(\prod_{i=1}^n(a_i-\lambda)\right)\sum_{i=1}^nu_i^2\left(1-\frac{a_i}{a_i-\lambda}\right)\\
&\quad=\left(\prod_{i=1}^n(a_i-\lambda)\right)\sum_{i=1}^nu_i^2\frac{1}{a_i-\lambda}=\sum_{i=1}^nu_i^2\left(\prod_{j\neq i}^n(a_j-\lambda)\right).
\end{align*}
Thus every convex combination of polynomials $p_1,p_2,\ldots,p_n$ given by
$$p_i(\lambda)\colonequals\prod_{j\neq i}^n(a_j-\lambda),$$
with coefficients in some open subset of the standard simplex $\triangle^n$, has a~root of multiplicity at least $n-k-1$.\\

To conclude the proof, we will need the following lemma:

\begin{lemma}
Let $f_1,f_2,\ldots,f_n\in\mathbb C[\lambda]$, $n\geq 2$, be polynomials, each convex combination of which, with coefficients in some open subset of the standard simplex $\triangle^n$, has a~root of multiplicity $m\geq 2$. Then all the polynomials have a~common root of multiplicity $m$.
\end{lemma}

\begin{proof}
Denote by $U\subseteq\triangle^n$ the open set of coefficients for which the assumption is met. The proof will be by induction over $n\in\mathbb N$. When $n=1$, there is nothing to prove. When $n=2$, for every $(\alpha,1-\alpha)\in U$ there exists $\lambda(\alpha)\in\mathbb C$ such that
$$\begin{pmatrix}f_1(\lambda(\alpha))&f_1'(\lambda(\alpha))&\ldots&f_1^{(m-1)}(\lambda(\alpha))\\f_2(\lambda(\alpha))&f_2'(\lambda(\alpha))&\ldots&f_2^{(m-1)}(\lambda(\alpha))\end{pmatrix}^\top\begin{pmatrix}\alpha\\1-\alpha\end{pmatrix}=\mathbf 0.$$
Now, if $\lambda(\alpha_1)=\lambda_*=\lambda(\alpha_2)$ for some $\alpha_1\neq\alpha_2$, then
$$\begin{pmatrix}f_1(\lambda_*)&f_1'(\lambda_*)&\ldots&f_1^{(m-1)}(\lambda_*)\\f_2(\lambda_*)&f_2'(\lambda_*)&\ldots&f_2^{(m-1)}(\lambda_*)\end{pmatrix}^\top=\mathbf 0$$
and the assertion follows. Otherwise, i.a., the first maximal minor
$$\begin{vmatrix}f_1(\lambda)&f_2(\lambda)\\f_1'(\lambda)&f_2'(\lambda)\end{vmatrix}$$
vanishes for infinitely many pairwise different values of $\lambda\in\mathbb C$, and since the determinant itself is a~polynomial in $\lambda$, the equality holds for every $\lambda\in\mathbb C$. In the complex domain $\{\lambda\in\mathbb C\mid f_2\neq 0\}$ it is equivalent to $[f_1(\lambda)/f_2(\lambda)]'\equiv 0$, which implies $f_1(\lambda)/f_2(\lambda)\equiv\Const$ Again, the assertion follows.\\

Finally, when $n\geq 3$, let $(\alpha_1,\alpha_2,\ldots,\alpha_n)\in U$ be any list of coefficients. Fix $\beta_*\in[0,1]$ such that $\alpha_1=(\alpha_1+\alpha_2)\beta_*$ and $\alpha_2=(\alpha_1+\alpha_2)(1-\beta_*)$, and consider the polynomials $\beta f_1+(1-\beta)f_2,f_3,\ldots,f_n$ for $\beta$ in some open neighborhood of $\beta_*$. By induction hypothesis, they all have a~common root $z(\beta)$ of order $m$. Since the set of common roots of $f_3,f_4,\ldots,f_n$ is finite, $z(\beta_1)=z_*=z(\beta_2)$ for some $\beta_1\neq\beta_2$. It follows that $z_*$ is also a~root of order $m$ of both $f_1$ and $f_2$, which concludes the proof.
\end{proof}

The above lemma immediately implies that the polynomials $p_1,p_2,\ldots,p_n$ have a~common root of multiplicity $n-k-1$. Hence, the characteristic polynomial of $\boldsymbol A$ has a~root of multiplicity $n-k$, which concludes the proof.
\end{proof}

An immediate corollary is a~refinement of \thref{thm:13}:

\begin{theorem}\thlabel{thm:15}
Let $K\subset\mathbb R^n$, $n\geq 4$, be a~convex body with boundary of class $C^3$ and let $p\in\mathbb R^n$ be any point of the ambient space. If $p$ is a~quasi-center of (affine) $1$-revolution for $K$, then $K$ itself is a~body of (affine) $1$-revolution.
\end{theorem}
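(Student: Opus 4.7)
The plan is to derive this theorem by stitching together \thref{thm:13}, \thref{lem:01}, and \thref{lem:07}. The affine variant is immediate: it is precisely \thref{thm:13} stated verbatim, so nothing new needs to be done there. The content therefore lies in the orthogonal variant, which I would obtain by first extracting an affine conclusion from the hypothesis and then bootstrapping it back to the orthogonal world.

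For the orthogonal case, I would argue as follows. An orthogonal rotation is in particular an affine rotation, so a quasi-center of $1$-revolution is automatically a quasi-center of affine $1$-revolution. Hence \thref{thm:13} applies and tells us that $K$ is a body of affine $1$-revolution. At this stage I still need the supplementary information that every hyperplane section through $p$ is invariant under a genuinely orthogonal group conjugated to $\rho_1$; such a group contains the orthogonal reflection through its fixed $1$-dimensional axis, which is an element conjugated to $\pi_1$. Therefore $p$ is also a pseudo-center of (orthogonal) $1$-reflection, and \thref{lem:01} applies, consuming the hypothesis $n\geq 4$ and yielding that $K$ is either a body of $1$-revolution or an ellipsoid.

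The first alternative is the desired conclusion. In the ellipsoid case I would invoke \thref{lem:07} with $k=1$, which is legitimate because $1<n-2$ whenever $n\geq 4$; since $p$ is a pseudo-center of $1$-revolution for the ellipsoid $K$, this upgrades $K$ to a genuine body of $1$-revolution. There is no real obstacle in the proof itself, as it is essentially a one-line assembly of previously established results; the only point worth emphasizing is that orthogonality of the section symmetries is exactly what feeds \thref{lem:01} and \thref{lem:07}. The dimensional restriction $n\geq 4$ is inherited from \thref{lem:01}, whose algebraic mechanism genuinely breaks down in dimension $3$, so removing it would require a different approach.
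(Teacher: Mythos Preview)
Your proposal is correct and matches the paper's own treatment: the paper states \thref{thm:15} as an immediate corollary of \thref{thm:13}, \thref{lem:01}, and \thref{lem:07}, and your argument unpacks precisely this chain of implications. The only minor remark is that the restriction $n\geq 4$ is already present in \thref{thm:13} itself, not only inherited from \thref{lem:01}.
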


\section{Proofs of the new theorems}\label{sec:04}

In this section, we will prove all the theorems formulated in \sref{sec:09}. But before, in the first subsection, we will consider problems of a~completely new kind, concerning many convex bodies simultaneously. Its main results --- \thref{lem:03,lem:05} --- will be heavily used in the following subsections, each containing the proof of a~single theorem, sometimes accompanied by its counterpart for a~different group of symmetries.

\subsection{Many-body problems}\label{sec:05}

Implementing the general proof paradigm described in \sref{sec:07}, we will need the following proposition to later prove an affine variant of \thref{thm:08}:

\begin{proposition}\thlabel{lem:03}
Let $K_1,K_2\subset\mathbb R^2$ be different convex bodies. If for all affine lines $L\in\Graff_1(\mathbb R^2)$ that intersect both $K_1$ and $K_2$, the midpoints of $K_1\cap L$ and $K_2\cap L$ coincide, then $K_1,K_2$ are concentric, homothetic ellipses.
\end{proposition}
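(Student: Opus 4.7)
The plan is to argue in four steps, eventually invoking a classical characterization of ellipses.

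\emph{Overlap and containment.} I would first observe that $K_1\cap K_2$ has non-empty interior: for any line $L$ meeting both bodies, the common midpoint of $K_1\cap L$ and $K_2\cap L$ lies in $K_1\cap K_2$, and varying $L$ continuously sweeps out a $2$-dimensional region of such common midpoints. Next, I would show that one body contains the other, say $K_1\subsetneq K_2$. If neither contained the other, there would exist $p_1\in K_1\setminus K_2$ and $p_2\in K_2\setminus K_1$; on the line $L$ through these two points the hypothesis would force $K_1\cap L$ and $K_2\cap L$ to be intervals concentric at their common midpoint $m$, with half-lengths $r_1,r_2$. The inclusion $p_1\in K_1\setminus K_2$ gives $r_2<|p_1-m|\leq r_1$, while $p_2\in K_2\setminus K_1$ gives $r_1<|p_2-m|\leq r_2$, a contradiction. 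An approximation by Minkowski summation with small Euclidean balls then reduces the problem to the case of smooth, strictly convex $K_1,K_2$.

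\emph{Tangent lines are bisected chords.} The key geometric consequence of the hypothesis is that for every $p\in\partial K_1$, the tangent line $\ell_p$ to $K_1$ at $p$ meets $K_2$ in a chord bisected at $p$. Indeed, by strict convexity $\ell_p\cap K_1=\{p\}$, which has $p$ as its trivial midpoint, so the hypothesis forces $\ell_p\cap K_2$ to have $p$ as its midpoint as well. Parametrizing $\partial K_1$ by arclength $\gamma_1(s)$ with unit tangent $\gamma_1'(s)$, this yields a positive function $\alpha(s)$ such that both $\gamma_1(s)\pm\alpha(s)\gamma_1'(s)$ lie on $\partial K_2$.

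\emph{Ellipticity of $K_2$.} I would then deduce that $K_2$ is an ellipse. Two pieces of information are available for each direction $v$: the midpoint curves of $K_1$ and $K_2$ in direction $v$ coincide over the interval of chord parameters for which the line meets $K_1$; and by the tangent-chord condition above, the midpoint curve of $K_2$ in direction $v$ passes through the two $v$-tangent points of $K_1$. I would combine these to show that every midpoint curve of $K_2$ is in fact a straight segment --- the conjugate diameter of $K_2$ in direction $v$ --- and then invoke the classical characterization of ellipses as those planar convex bodies whose midpoints of parallel chords are collinear for every chord direction. I expect this step to be the technical heart of the proof: the midpoint curves of $K_2$ must be shown to remain straight when extended past $\partial K_1$ out to the $v$-tangent points of $K_2$, and it is this ``extension'' portion that requires a delicate convexity or differential argument.

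\emph{Concentric homothety.} Once $K_2$ is known to be an ellipse, I translate its center to the origin and write $K_2=\{x:\langle Ax,x\rangle\leq 1\}$ for some positive-definite symmetric $A$. The tangent-chord condition then reads $\langle A\gamma_1(s),\gamma_1'(s)\rangle=0$ for all $s$, equivalently $\frac{d}{ds}\langle A\gamma_1(s),\gamma_1(s)\rangle\equiv 0$, so $\langle A\gamma_1(s),\gamma_1(s)\rangle$ is constant; thus $\partial K_1$ lies on a level set of $\langle Ax,x\rangle$, making $K_1$ a concentric homothet of $K_2$, as required.
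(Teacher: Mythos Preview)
Your proposal has the right overall shape, and two pieces are genuinely nice: the tangent--chord observation in Step~2 is correct and is not used in the paper, and the derivation of concentric homothety in Step~4 is cleaner than the paper's concluding isosceles-triangle argument. There are, however, two real gaps.

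First, the Minkowski-sum reduction in Step~1 does not work: the midpoint-coincidence hypothesis is \emph{not} preserved under $K_i\mapsto K_i+\epsilon\mathbb B^2$. For a line $L$ meeting $K$, the segment $(K+\epsilon\mathbb B^2)\cap L$ extends $K\cap L$ by roughly $\epsilon/\sin\theta$ on each side, where $\theta$ is the angle of incidence of $L$ with $\partial K$ at the respective endpoint; since these angles generally differ at the two ends, the midpoint shifts. The paper instead \emph{derives} regularity from the hypothesis: Alexandrov's theorem gives twice-differentiability of $\partial K_i$ almost everywhere, and a reflection trick (re-parametrizing one boundary through the other via the shared midpoints) upgrades this to twice-differentiability everywhere.

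Second, and more seriously, Step~3 contains no actual argument. The two facts you isolate --- coincidence of the $v$-midcurves of $K_1$ and $K_2$ over the $K_1$-region, and passage of the $K_2$-midcurve through the $v$-tangent points $p_\pm(v)$ of $K_1$ --- do not combine to give straightness: the second fact is just the limiting case of the first as the chord degenerates, and adds nothing new. You locate the difficulty in the ``extension past $\partial K_1$'', but even \emph{inside} the $K_1$-region nothing you have recorded forces the common midcurve to be a line segment. This is exactly where the paper does the real work: it parametrizes the boundary arcs, encodes the midpoint condition as an implicit relation among chord parameters, differentiates twice, and reads off $m''\equiv 0$ for the midcurve of the \emph{inner} body $K_1$; Blaschke's characterization then makes $K_1$ an ellipse. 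Note that the paper's order is the reverse of yours: it proves the inner body elliptic first (where the chord data is complete) and only afterwards handles the outer body. Your order makes the hard step harder, since for chords of $K_2$ that miss $K_1$ the hypothesis says nothing at all.
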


\begin{proof}
First, observe that one of the bodies $K_1,K_2$ must contain the other in its interior. Indeed, every chord of $K_1$ with one endpoint in the intersection $\partial K_1\cap\partial K_2$ must also be a~chord of $K_2$, which implies $K_1=K_2$, unless the boundaries of $K_1$ and $K_2$ are disjoint. But since the interiors of $K_1$ and $K_2$ must have a~non-empty intersection, the assertion follows. Therefore, we may henceforth assume that $K_1\subset\Int K_2$.\\

\begin{claim}\thlabel{lem:06}
The boundaries of $K_1,K_2$ are twice differentiable everywhere.
\end{claim}

\begin{proof}
Let $p\in\partial K_2$ be any point on the boundary and let $\rho_1,\rho_2:\Gr_1(\mathbb R^2)+p\to\mathbb R$ be functions that assign to each line $L$ passing through $p$ the distance from $p$ to the nearest and farthest point of $K_1\cap L$, respectively. Since $K_1$ is convex, by Alexandrov's theorem \cite[Teorema]{Alex}, both functions $\rho_1,\rho_2$ are twice differentiable almost everywhere. In particular, there exists a~line $L_*$ passing through $p$ such that both $\rho_1,\rho_2$ are twice differentiable at $L_*$. Now, let $\tilde p\in\partial K_2$ be the point of the intersection $\partial K_2\cap L_*$ other than $p$ and let $\tilde\rho_1,\tilde\rho_2:\Gr_1(\mathbb R^2)+\tilde p\to\mathbb R$ be analogous functions defined for the point $\tilde p$. Clearly, both $\tilde\rho_1,\tilde\rho_2$ are again twice differentiable at $L_*$. Furthermore, since the midpoints of $K_1\cap L$ and $K_2\cap L$ coincide, we have $\rho_1(L_*)=\tilde\rho_1(L_*)$, and therefore $\tilde\rho_1+\tilde\rho_2$ is the local parametrization of $\partial K_2$ at $p$. In particular, $\partial K_2$ is twice differentiable at $p$. Because the choice of $p\in\partial K_2$ was arbitrary, it follows that $\partial K_2$ is indeed twice differentiable everywhere. We show the same for $\partial K_1$ in a~completely analogous way.
\end{proof}

Let $I$ be the projection of $K_1$ onto the horizontal axis and denote by $\mu:I\to\mathbb R^2$ a~regular parametrization $\mu(x)\equalscolon(x,m(x))$ of the common midcurve of $K_1$ and $K_2$ in the direction of the vertical axis. Also, let $\gamma_{\pm i}:I\to\mathbb R^2$, $i=1,2$, be regular curves $\gamma_{\pm i}(x)\equalscolon(x,m(x)\pm g_i(x))$ parametrizing the upper and the lower arc of the boundary of $K_1$ and $K_2$, respectively. By \thref{lem:06}, we know that all five curves $\gamma_{+2},\gamma_{+1},\mu,\gamma_{-1},\gamma_{-2}$ are twice differentiable everywhere.\\

Now, fix any point in the interval $I$. After applying a~suitable shift, without loss of generality, we may assume that it is the origin. Denote by $u_\pm:I^2\to\mathbb R$ the first coordinate of the point of intersection of the line passing through $\gamma_{+1}(s)$ and $\gamma_{-1}(t)$ with the upper and the lower arc of the boundary of $K_2$, respectively. On some neighbourhood of $(0,0)$, the function is well-defined and satisfies an implicit equation
$$\det(\gamma_{\pm 2}(u_\pm(s,t))-\gamma_{-1}(t),\gamma_{+1}(s)-\gamma_{-1}(t))=0.$$
Hence, by the Implicit Function Theorem, there exists an open neighborhood of $(0,0)$ such that the unique solution $u_\pm$ of differentiability class $C^1$. Furthermore, its partial derivatives are given by
\begin{equation}\label{eq:09}
\begin{aligned}
u_\pm^{(1,0)}(s,t)&=\frac{g_1(t)\pm g_2(u_\pm)-m(t)+m(u_\pm)+t g_1'(s)-u_\pm g_1'(s)+t m'(s)-u_\pm m'(s)}{g_1(s)+g_1(t)+m(s)-m(t)\mp s g_2'(u_\pm)\pm t g_2'(u_\pm)-s m'(u_\pm)+t m'(u_\pm)},\\
u_\pm^{(0,1)}(s,t)&=\frac{g_1(s)\mp g_2(u_\pm)+m(s)-m(u_\pm)+s g_1'(t)-u_\pm g_1'(t)-s m'(t)+u_\pm m'(t)}{g_1(s)+g_1(t)+m(s)-m(t)\mp s g_2'(u_\pm)\pm t g_2'(u_\pm)-s m'(u_\pm)+t m'(u_\pm)}.
\end{aligned}
\end{equation}
The hypothesis is equivalent to
\begin{equation}\label{eq:10}
\gamma_{+1}(s)+\gamma_{-1}(t)-\gamma_{+2}(u_+(s,t))-\gamma_{-2}(u_-(s,t))=\mathbf 0.
\end{equation}
Differentiating \eqref{eq:10} with respect to $s$, substituting \eqref{eq:09}, then differentiating the result with respect to $t$, again substituting \eqref{eq:09} and finally evaluating the result at $(s,t)=(0,0)$ yields
$$\begin{pmatrix}-\frac{g_1(0) g_1'(0)-g_2(0) g_2'(0)}{g_1(0)^2},&-\frac{2 g_1(0) g_1'(0) m'(0)-2 g_2(0) g_2'(0) m'(0)+g_1(0)^2 m''(0)-g_2(0)^2 m''(0)}{2 g_1(0)^2}\end{pmatrix}=\mathbf 0,$$
whence we immediately obtain $m''(0)=0$. But since we may arbitrarily shift the coordinate system, $m''$ vanishes identically. It follows that every midcurve of $K_1$ is a~straight line, which by a~classical result of W.~Blaschke \cite[\S 9]{blaschke1923vorlesungen} implies that $K_1$ is an ellipse.\\

Since the hypothesis is affine-invariant, without loss of generality, we may assume that $K_1$ is a~disk centered at $o$. Let $p\in\partial K_2$ be any point on the boundary and let $m$ be the midpoint of any chord $p\tilde p$ of $K_2$ that intersects $K_1$. Since $m$ is also a~midpoint of some chord of $K_1$, the angle $\angle pmo$ is right, whence the triangle $\triangle po\tilde p$ is isosceles. Consequently, the geometric locus of all such points $\tilde p$ is a~circle centered at $o$ and passing through $p$. Because $p$ was arbitrary, $K_2$ must likewise be a~disk centered at $o$. This concludes the proof.
\end{proof}

The following conjecture, recently proven by F.~Nazarov, D.~Ryabogin, V.~Yaskin, and B.~Zawalski for centrally symmetric convex bodies with boundary of differentiability class $C^1$ \cite{NRYZ}, is a~higher-dimensional generalization of \thref{lem:03}:

\begin{conjecture}\thlabel{con:07}
Let $K_1,K_2\subset\mathbb R^n$, $n\geq 2$, be different convex bodies. If for all affine hyperplanes $H\in\Graff_{n-1}(\mathbb R^n)$ that intersect both $K_1$ and $K_2$, the centroids of $K_1\cap H$ and $K_2\cap H$ coincide, then $K_1,K_2$ are concentric, homothetic ellipsoids.
\end{conjecture}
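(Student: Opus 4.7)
This is a substantial open problem; the plan is to extend the scheme of \thref{lem:03} to arbitrary dimension and reduce the end-game to the centrally symmetric $C^1$ case recently settled in \cite{NRYZ}.

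First, I would reproduce the nesting step: if $p \in \partial K_1 \cap \partial K_2$, then a family of hyperplanes approaching a common supporting hyperplane of $K_1$ and $K_2$ at $p$ produces sections of one body collapsing to $\{p\}$ while sections of the other remain of positive diameter, contradicting the centroid-equality in the limit. After relabeling, $K_1 \subset \Int K_2$, so the hypothesis $c_1(H) = c_2(H)$ holds for every hyperplane $H$ meeting $K_1$.

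Second, I would promote both boundaries to $C^1$. Alexandrov's theorem gives twice-differentiability almost everywhere on $\partial K_1$; at a smooth point $q \in \partial K_2$, letting hyperplanes rotate around a tangent $(n-2)$-plane at $q$ should turn the centroid-equality into a local implicit equation recovering $\partial K_2$ from $K_1$, and bootstrapping between $K_1$ and $K_2$ should yield $C^1$ regularity everywhere. The main technical subtlety here is that, unlike in \thref{lem:03} where the centroid of a line-section is the midpoint of a chord, $c_i(H)$ is now an integral over an $(n-1)$-dimensional section, so the elementary implicit-function argument of \thref{lem:06} has to be replaced by a variational identity obtained from the divergence theorem applied to $\chi_{K_i}$.

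Third, I would show that $K_1$ and $K_2$ are centrally symmetric about a common point $o$; combined with the $C^1$ regularity, the conclusion then follows from \cite{NRYZ}. This is the step I expect to be the genuine obstacle, because in higher dimensions the centroid is a smoothed integral average rather than a chord midpoint, and the elementary isosceles-triangle argument that closes \thref{lem:03} has no direct higher-dimensional analog. A natural avenue is to exploit the affine invariance of the centroid condition by applying it to the reflected pair $(-K_1 + 2o, -K_2 + 2o)$ for a well-chosen candidate $o$, obtaining a functional equation for the support functions of $K_i$ on $\mathbb S^{n-1}$, and then coupling it with the rigidity afforded by the False Center Theorem (\thref{thm:11}); making this passage rigorous without assuming central symmetry at the outset appears to require genuinely new ideas, which is why the problem remains open.
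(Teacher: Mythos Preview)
The paper does not prove \thref{con:07}; it is stated there as an open conjecture, with the remark that it has recently been established by Nazarov, Ryabogin, Yaskin, and Zawalski \cite{NRYZ} only in the special case of centrally symmetric bodies with $C^1$ boundary. What the paper actually proves is the two-dimensional case (\thref{lem:03}) and, separately, the rudimentary case where both $K_1$ and $K_2$ are assumed \emph{a priori} to be ellipsoids (\thref{lem:05}). So there is nothing to compare your proposal against: you are outlining a strategy for a problem the paper explicitly leaves open, and you yourself acknowledge this in your final paragraph.

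That said, two comments on the strategy itself. Your nesting argument is not quite right as written: if $p\in\partial K_1\cap\partial K_2$, there is no reason the two bodies share a supporting hyperplane at $p$, and even if they do, both sections collapse simultaneously, so the contradiction you describe does not materialise. The paper's argument in the $n=2$ case is different and cleaner: any chord of $K_1$ through a common boundary point must, by the midpoint condition, also be a chord of $K_2$, forcing $K_1=K_2$; this extends to hyperplane sections in higher dimensions without difficulty. Second, your identification of the central-symmetry step as the genuine obstacle is exactly right, and this is precisely why the paper refrains from claiming the full conjecture: the isosceles-triangle trick that closes \thref{lem:03} has no known higher-dimensional substitute, and the reduction to \cite{NRYZ} that you sketch would indeed require new ideas.
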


For the sake of completeness, we will independently confirm \thref{con:07} here when both $K_1$ and $K_2$ are \emph{a priori} known to be ellipsoids. In this simple case, however, we require a~much smaller family of hyperplanes $H$ for which the centroids of $K_1\cap H$ and $K_2\cap H$ coincide:

\begin{lemma}\thlabel{lem:05}
Let $K_1,K_2\subset\mathbb R^n$, $n\geq 2$, be ellipsoids, let $\mathcal F\subseteq\Graff_{n-1}(\mathbb R^n)$ be a~family of affine hyperplanes that intersect both $K_1$ and $K_2$, containing a~hyperplane at every direction, and let $\mathcal N_0\subseteq\mathbb P(\mathbb R^n)$ be the family of lines normal to the affine hyperplanes $H\in\mathcal F$ passing through the centroid of $K_1$. If for all affine hyperplanes $H\in\mathcal F$ the centroids of $K_1\cap H$ and $K_2\cap H$ coincide, then the ellipsoids $K_1,K_2$ are concentric. Furthermore, the ellipsoids $K_1,K_2$ are homothetic, unless $\mathbb P(\mathbb R^n)\setminus\mathcal N_0$ is contained in a~finite union of proper linear subspaces.
\end{lemma}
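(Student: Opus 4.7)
The plan is to reduce to $K_1 = \mathbb B^n$ and then extract both conclusions from a direct coordinate computation of the two centroids. First, I apply an invertible affine map $T \in \GA(\mathbb R^n)$ sending $K_1$ to $\mathbb B^n$ (and hence its centroid $c_1$ to the origin). Affine maps preserve centroids of convex sets, and the linear part of $T$ acts bijectively on $\mathbb P(\mathbb R^n)$, so the hypothesis that $\mathcal F$ contains a hyperplane at every direction is preserved, and the property that $\mathbb P(\mathbb R^n) \setminus \mathcal N_0$ is (respectively, is not) contained in a finite union of proper linear subspaces is invariant under the dual linear action. Hence I may assume $K_1 = \mathbb B^n$, $c_1 = \boldsymbol 0$, and $K_2 = \{\boldsymbol x : (\boldsymbol x - \boldsymbol c)^\top \boldsymbol A (\boldsymbol x - \boldsymbol c) \leq 1\}$ for some symmetric positive definite $\boldsymbol A$ and some $\boldsymbol c \in \mathbb R^n$, where $\boldsymbol\xi$ ranges over unit normal directions.

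For concentricity, for a hyperplane $H = \{\boldsymbol x : \langle \boldsymbol\xi, \boldsymbol x\rangle = t\}$ the centroid of $K_1 \cap H$ is $t\boldsymbol\xi$, whereas a Lagrange-multiplier computation minimizing $(\boldsymbol x - \boldsymbol c)^\top \boldsymbol A(\boldsymbol x - \boldsymbol c)$ on $H$ gives the centroid of $K_2 \cap H$ as
\[
\boldsymbol c + \frac{t - \langle \boldsymbol c, \boldsymbol\xi\rangle}{\langle \boldsymbol A^{-1}\boldsymbol\xi, \boldsymbol\xi\rangle}\, \boldsymbol A^{-1}\boldsymbol\xi.
\]
When $\boldsymbol\xi$ is an eigenvector of $\boldsymbol A$, this simplifies to $t\boldsymbol\xi + (\boldsymbol c - \langle \boldsymbol c, \boldsymbol\xi\rangle\boldsymbol\xi)$, so coincidence with $t\boldsymbol\xi$ forces the component of $\boldsymbol c$ perpendicular to $\boldsymbol\xi$ to vanish, i.e., $\boldsymbol c \parallel \boldsymbol\xi$. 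Applying this at each eigenvector direction of $\boldsymbol A$ (all of which are realized by $\mathcal F$): if $\boldsymbol A \neq \lambda \mathbb I_n$, two independent eigenvector directions force $\boldsymbol c = \boldsymbol 0$; if $\boldsymbol A = \lambda \mathbb I_n$, every direction is an eigenvector and again $\boldsymbol c = \boldsymbol 0$. Either way $K_1$ and $K_2$ are concentric.

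With $\boldsymbol c = \boldsymbol 0$ in hand, the coincidence condition for a hyperplane with normal $\boldsymbol\xi$ at signed distance $t \neq 0$ from the origin collapses to $t\boldsymbol\xi = (t/\langle \boldsymbol A^{-1}\boldsymbol\xi, \boldsymbol\xi\rangle)\,\boldsymbol A^{-1}\boldsymbol\xi$, which forces $\boldsymbol A^{-1}\boldsymbol\xi \parallel \boldsymbol\xi$, i.e., $\boldsymbol\xi$ is an eigenvector of $\boldsymbol A$. Contrapositively, any non-eigenvector direction $\boldsymbol\xi$ can be realized in $\mathcal F$ only by hyperplanes through the origin $= c_1$, placing $[\boldsymbol\xi] \in \mathcal N_0$. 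Hence $\mathbb P(\mathbb R^n) \setminus \mathcal N_0$ is contained in the projectivized eigenvector locus $\bigcup_i \mathbb P(E_{\lambda_i})$ of $\boldsymbol A$, a finite union of proper linear subspaces whenever $\boldsymbol A \neq \lambda \mathbb I_n$. Thus, if that containment fails, then $\boldsymbol A = \lambda \mathbb I_n$ and $K_2$ is a ball concentric with $K_1$, i.e., homothetic to $K_1$. I do not foresee a serious obstacle: the argument is essentially linear-algebraic, with the only care points being the bookkeeping in the affine reduction and the fact that $\mathcal F$ realizes every direction, and in particular each eigenvector direction of $\boldsymbol A$.
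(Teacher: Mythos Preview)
Your proposal is correct and follows essentially the same approach as the paper: after an affine normalization sending $K_1$ to the unit ball, both proofs compute the centroid of the slice of $K_2$ explicitly, specialize to eigenvector directions of the shape operator to force the centers to coincide (using that $n\geq 2$ supplies at least two independent eigendirections), and then read off that every direction in $\mathbb P(\mathbb R^n)\setminus\mathcal N_0$ must be an eigenvector, hence lies in the union of projectivized eigenspaces. The only cosmetic differences are your parametrization of $K_2$ via its quadratic form (rather than as $\boldsymbol A\mathbb B^n+\boldsymbol b$) and the unnecessary case split ``$\boldsymbol A\neq\lambda\mathbb I_n$ vs.\ $\boldsymbol A=\lambda\mathbb I_n$'' in the concentricity step, which the paper handles uniformly.
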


\begin{proof}
Since the hypothesis is affine-invariant, without loss of generality we may assume that $K_1=\mathbb B^n(\mathbf 0,1)$ is a~Euclidean unit ball and $K_2=\boldsymbol AK_1+\boldsymbol b$, $\boldsymbol A\in\GL(\mathbb R^n),\ \boldsymbol b\in\mathbb R^n$. Denote by $\boldsymbol\mu(X)$ the centroid of $X$. Clearly,
$$\boldsymbol\mu(K_1\cap(\langle\boldsymbol\xi\rangle^\perp+t\boldsymbol\xi))=t\boldsymbol\xi.$$
It is also not difficult to see that
\begin{align*}
&\boldsymbol\mu(K_2\cap(\langle\boldsymbol\xi\rangle^\perp+t\boldsymbol\xi))=\boldsymbol\mu((\boldsymbol AK_1+\boldsymbol b)\cap(\langle\boldsymbol\xi\rangle^\perp+t\boldsymbol\xi))=\boldsymbol\mu(\boldsymbol AK_1\cap(\langle\boldsymbol\xi\rangle^\perp+(t\boldsymbol\xi-\boldsymbol b)))+\boldsymbol b\\
&\quad=\boldsymbol A\boldsymbol\mu(K_1\cap\boldsymbol A^{-1}(\langle\boldsymbol\xi\rangle^\perp+(t\boldsymbol\xi-\boldsymbol b)))+\boldsymbol b=\boldsymbol A\boldsymbol\mu(K_1\cap(\langle\boldsymbol A^\top\boldsymbol\xi\rangle^\perp+\boldsymbol A^{-1}(t\boldsymbol\xi-\boldsymbol b)))+\boldsymbol b\\
&\quad=\boldsymbol A((\boldsymbol A^{-1}(t\boldsymbol\xi-\boldsymbol b)\cdot\|\boldsymbol A^\top\boldsymbol\xi\|^{-1}\boldsymbol A^\top\boldsymbol\xi)\|\boldsymbol A^\top\boldsymbol\xi\|^{-1}\boldsymbol A^\top\boldsymbol\xi)+\boldsymbol b=\frac{t-\boldsymbol\xi^\top\boldsymbol b}{\boldsymbol\xi^\top\boldsymbol A\boldsymbol A^\top\boldsymbol\xi}\boldsymbol A\boldsymbol A^\top\boldsymbol\xi+\boldsymbol b.
\end{align*}
Now, by assumption, we have
\begin{align}
\nonumber&\forall\boldsymbol\xi\in\mathbb S^{n-1}\ \exists t\in\mathbb R\ \frac{t-\boldsymbol\xi^\top\boldsymbol b}{\boldsymbol\xi^\top\boldsymbol A\boldsymbol A^\top\boldsymbol\xi}\boldsymbol A\boldsymbol A^\top\boldsymbol\xi+\boldsymbol b=t\boldsymbol\xi\\
\nonumber\iff&\forall\boldsymbol\xi\in\mathbb S^{n-1}\ \exists t\in\mathbb R\ \frac{\boldsymbol\xi\cdot(t\boldsymbol\xi-\boldsymbol b)}{\boldsymbol\xi\cdot\boldsymbol A\boldsymbol A^\top\boldsymbol\xi}\boldsymbol A\boldsymbol A^\top\boldsymbol\xi=t\boldsymbol\xi-\boldsymbol b\\
\label{eq:18}\iff&\forall\boldsymbol\xi\in\mathbb S^{n-1}\ \exists s,t\in\mathbb R\ s\boldsymbol A\boldsymbol A^\top\boldsymbol\xi=t\boldsymbol\xi-\boldsymbol b\\
\label{eq:17}\iff&\forall\boldsymbol\xi\in\mathbb S^{n-1}\ \boldsymbol b\in\langle\boldsymbol\xi,\boldsymbol A\boldsymbol A^\top\boldsymbol\xi\rangle.
\end{align}
In particular, \eqref{eq:17} holds for every eigenvector $\boldsymbol\xi$ of $\boldsymbol A\boldsymbol A^\top$ and hence $\boldsymbol b$ is parallel to $\boldsymbol\xi$. But since $\boldsymbol A\boldsymbol A^\top$ admits $n\geq 2$ linearly independent eigenvectors, it follows that $\boldsymbol b=\mathbf 0$. This concludes the first part of the proof. For the second part, observe that \eqref{eq:18} reads
$$\forall\boldsymbol\xi\notin\mathcal N_0\ \exists s\in\mathbb R\ s\boldsymbol A\boldsymbol A^\top\boldsymbol\xi=\boldsymbol\xi,$$
which means that every $\boldsymbol\xi\notin\mathcal N_0$ is an eigenvector of $\boldsymbol A\boldsymbol A^\top$. It follows that $\mathbb P(\mathbb R^n)\setminus\mathcal N_0$ is contained in the union of eigenspaces of $\boldsymbol A\boldsymbol A^\top$. If $\boldsymbol A\boldsymbol A^\top=\lambda^2\mathbf I_n$ for some $\lambda\in\mathbb R$, then $\boldsymbol A=\lambda\boldsymbol U$, where $\boldsymbol U\in\O(\mathbb R^n)$, and hence $K_2=\lambda\mathbb B^n(\mathbf 0,1)$ is homothetic to $K_1$. Otherwise, the eigenspaces of $\boldsymbol A\boldsymbol A^\top$ are proper linear subspaces. This concludes the proof.
\end{proof}

Finally, in order to apply our paradigm to a~$(-1)$-aligned quasi-center of $k$-reflection (resp. $k$-revolution) for $k\geq 2$ or a~$(-1)$-aligned pseudo-center of $k$-reflection (resp. $k$-revolution) for $k\geq 0$, we would like to see the following question resolved:

\begin{question}\thlabel{con:09}
Let $K_1,K_2,\ldots,K_m\subset\mathbb R^n$, $m\geq 2$, $n\geq 2$, be pairwise different convex bodies. If for all affine hyperplanes $H\in\Graff_{n-1}(\mathbb R^n)$ that intersect all $K_i$, the centroids of $K_i\cap H$ are contained in some proper affine subspace of $H$ of dimension $0\leq k<m-1$, are all $K_i$ bodies of affine $k$-revolution, invariant under the action of the same group affinely conjugate to $\rho_k$?
\end{question}

\noindent Its weaker form, when $K_1,K_2,\ldots,K_m$ are \emph{a priori} known to be bodies of (affine) $k$-revolution, would also be interesting and useful:

\begin{question}\thlabel{con:10}
Let $K_1,K_2,\ldots,K_m\subset\mathbb R^n$, $m\geq 2$, $n\geq 2$, be pairwise different convex bodies of (affine) $k$-revolution, $0\leq k<n-1$. If for all affine hyperplanes $H\in\Graff_{n-1}(\mathbb R^n)$ that intersect all $K_i$, the intersections $K_i\cap H$ are invariant under the action of the same group (affinely conjugate to) $\rho_k$, are all $K_i$ bodies of affine $k$-revolution, invariant under the action of the same group (affinely conjugate to) $\rho_k$?
\end{question}

\noindent To the best of our knowledge, both are open problems. For $k=0$, \thref{con:09} reduces to \thref{con:07}. Note that \thref{con:09} encapsulates the whole difficulty of our approach. But since we are not yet able to prove it in its full generality, to apply either \thref{lem:05} or even \thref{con:10} (i.e., its rudimentary forms), we still need to prove independently that the intersections of $K$ with all the affine hyperplanes parallel to $T$ are bodies of (affine) $k$-revolution. Given the current state of knowledge, this unfortunately limits us to the case $k=1$.

\subsection{Proof of \thref{thm:14}}

First, we will prove the affine variant of \thref{thm:14}.

\begin{proof}[Proof of the affine variant of \thref{thm:14} for $n\geq 4$]
By the argument outlined in \sref{sec:07}, we have a~continuous family of convex bodies $(K_\tau)_{\tau\in\mathbb R}$, whose hyperplanar sections $K_\tau\cap H'$ for any fixed $H'\in\Graff_{n-2}(T)$ are all invariant under the action of the same group affinely conjugate to $\pi_0$ (i.e., are all centrally symmetric with respect to the same point). Hence, by, for example, \thref{thm:12}, every $K_\tau$ is an ellipsoid. Furthermore, since the common center of symmetry is, in particular, the common centroid of all the hyperplanar sections $K_\tau\cap H'$, by \thref{lem:05}, all the ellipsoids $K_\tau$ are concentric and homothetic with respect to their common centroid $p'\in T$. Now, we may apply an affine change of coordinate system so that the hyperplane $T$ is perpendicular to the line $\langle p,p'\rangle$ and the convex bodies $K_\tau$ are Euclidean balls centered at $p'$. It follows that all the intersections $K\cap T'$ of $K$ with affine hyperplanes $T'\in\Graff_{n-1}(\mathbb R^n)$ parallel to $T$ are Euclidean balls centered at $\langle p,p'\rangle\cap T'$, whence $K$ is a~body of $1$-revolution about the axis $\langle p,p'\rangle$. This concludes the proof.
\end{proof}

The above argument fails for $n=3$ because none of the theorems mentioned in \sref{sec:08} hold for $2$-dimensional convex bodies. Therefore, we have to make a~more profound use of the fact that all $K_\tau\cap H'$ are invariant under the action of the same group.

\begin{proof}[Proof of the affine variant of \thref{thm:14} for $n=3$]
This time, we have a~continuous family of planar convex bodies $(K_\tau)_{\tau\in\mathbb R}$, whose sections $K_\tau\cap L$ share the same midpoint for any fixed line $L\in\Graff_1(T)$. Hence by \thref{lem:03}, they are concentric, homothetic ellipses. We conclude the proof in the same way as before.
\end{proof}

The orthogonal variant of \thref{thm:14} for $n=3$ follows along the lines of \thref{thm:08}, whereas for $n\geq 4$ it is an immediate consequence of \thref{lem:01}. Actually, it would likewise follow from \thref{thm:08} by simple induction on the dimension $n$, if only \thref{thm:15} did not require an additional smoothness assumption.

\subsection{Proofs of \thref{thm:16,thm:02}}\label{sec:17}

First, we will prove \thref{thm:16}.

\begin{proof}[Proof of \thref{thm:16}]
Let $T'\in\Graff_3(\mathbb R^4)$ be any affine hyperplane parallel to $T$. By the argument outlined in \sref{sec:07}, we have a~continuous family of convex bodies $(K_\tau)_{\tau\in\mathbb R}$ contained in $T'$, whose hyperplanar sections $K_\tau\cap(\sigma(H)+p)$ for any fixed $H\in\Gr_2(T)$ are all invariant under the action of the same group affinely conjugate to $\pi_0$ (i.e., are all centrally symmetric with respect to the same point). Hence, by \thref{thm:05}, $K_\tau$ is itself centrally symmetric for every $\tau$ such that $(\sigma(H_1)+p)\cap(\sigma(H_2)+p)\cap\Int K_\tau\neq\emptyset$ for every $H_1,H_2\in\Gr_2(T)$.

\begin{claim}\thlabel{lem:08}
There exists a~blunt (i.e., not containing its vertex) right circular cone $C$ with axis $T^\perp$ such that $\sigma(H_1)\cap\sigma(H_2)\cap C\neq\emptyset$ for every $H_1,H_2\in\Gr_2(T)$.
\end{claim}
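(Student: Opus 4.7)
The plan is to parametrize each $\sigma(H)$ by a tilt vector in $T$, describe the intersection $\sigma(H_1)\cap\sigma(H_2)$ as an affine line in a fixed horizontal slice above $T$, and use the Lipschitz hypothesis to bound the distance from this line to the origin uniformly over all pairs $(H_1,H_2)$. This bound will determine the opening angle of the cone.

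First, I fix a unit vector $\boldsymbol n$ spanning $T^{\perp}$. Since $\sigma(H)$ is $3$-dimensional and meets $T$ in $H$, it is transverse to $T$, so there is a unique vector $v(H)\in H^{\perp}\cap T$ (the orthogonal complement being taken inside $T$) such that $\boldsymbol n+v(H)\in\sigma(H)$, and consequently $\sigma(H)=H\oplus\langle\boldsymbol n+v(H)\rangle$. The Lipschitz continuity of $\sigma$ on $\Gr_2(T)$, together with this uniform transversality, makes $v:\Gr_2(T)\to T$ Lipschitz as well. For $H_1\neq H_2$, an element of the form $\boldsymbol n+\boldsymbol y$, $\boldsymbol y\in T$, lies in $\sigma(H_i)$ precisely when $\boldsymbol y-v(H_i)\in H_i$, which reads as the linear equation $\langle\boldsymbol y,\boldsymbol u_i\rangle=\langle v(H_i),\boldsymbol u_i\rangle\equalscolon\beta_i$, where $\boldsymbol u_i$ is a unit vector normal to $H_i$ inside $T$. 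Since $H_1+H_2=T$ when $H_1\neq H_2$, the two equations cut out an affine line $\ell(H_1,H_2)\subset T$, and any point of $\boldsymbol n+\ell(H_1,H_2)$ is a nonzero vector of $\sigma(H_1)\cap\sigma(H_2)$.

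The main obstacle is to bound the distance from the origin to $\ell(H_1,H_2)$ uniformly. A direct Lagrange multiplier computation yields
\[
d(H_1,H_2)^2=\frac{\beta_1^2-2\beta_1\beta_2\cos\theta+\beta_2^2}{\sin^2\theta}=\frac{(\beta_1-\beta_2)^2}{\sin^2\theta}+\frac{2\beta_1\beta_2}{1+\cos\theta},
\]
where $\theta\colonequals\angle(\boldsymbol u_1,\boldsymbol u_2)\in[0,\pi/2]$. The second summand is harmless since the $\beta_i$ are bounded by compactness of $\Gr_2(T)$ and $1+\cos\theta\geq 1$. The first summand is singular as $H_1\to H_2$, and this is exactly where Lipschitz continuity is indispensable: the Grassmannian distance between $H_1,H_2$ is comparable to $\sin\theta$, so writing
\[
\beta_1-\beta_2=\langle v(H_1)-v(H_2),\boldsymbol u_1\rangle+\langle v(H_2),\boldsymbol u_1-\boldsymbol u_2\rangle
\]
and applying Lipschitz continuity of $v$ together with $\|\boldsymbol u_1-\boldsymbol u_2\|\leq\sin\theta$ yields $|\beta_1-\beta_2|=O(\sin\theta)$. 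Consequently $R\colonequals\sup_{H_1,H_2}d(H_1,H_2)<\infty$, and I take $C$ to be the blunt right circular cone
\[
C=\{\boldsymbol x\in\mathbb R^4\setminus\{\boldsymbol 0\}:\|\pi_T(\boldsymbol x)\|\leq R\cdot|\pi_{T^{\perp}}(\boldsymbol x)|\},
\]
enlarging $R$ if necessary so that $R\geq\|v\|_{\infty}$. Lifting the foot of perpendicular from the origin to $\ell(H_1,H_2)$ by $\boldsymbol n$ produces the required vector in $\sigma(H_1)\cap\sigma(H_2)\cap C$; the diagonal case $H_1=H_2$ is handled by the vector $\boldsymbol n+v(H_1)\in\sigma(H_1)$, which lies in $C$ by the enlarged choice of $R$.
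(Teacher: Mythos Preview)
Your argument is correct and follows essentially the same route as the paper's: both compute the distance from $T^\perp$ to $\sigma(H_1)\cap\sigma(H_2)$ in a fixed horizontal slice and use the Lipschitz hypothesis to control the term that blows up as $H_1\to H_2$; the paper does this via trigonometric identities in the angles $\alpha_i=\angle(\sigma(H_i),T^\perp)$ and a law--of--cosines computation in a cyclic quadrilateral, while your tilt--vector parametrization $v(H)$ yields the identical distance formula $\frac{\beta_1^2-2\beta_1\beta_2\cos\theta+\beta_2^2}{\sin^2\theta}$ more directly (indeed $\beta_i=\tan\alpha_i$ with suitable signs). One small slip: $\|\boldsymbol u_1-\boldsymbol u_2\|=2\sin(\theta/2)$, which is not $\leq\sin\theta$ but is $\leq\sqrt{2}\,\sin\theta$ on $[0,\pi/2]$, so your $O(\sin\theta)$ conclusion is unaffected.
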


\begin{proof}
Without loss of generality, we may assume that $T=\langle\boldsymbol e_n\rangle^\perp$. Let $H_1,H_2\in\Gr_2(T)$ be any pair of distinct hyperplanes (see \fref{fig:06}). Let $\boldsymbol n_1,\boldsymbol n_2$ be the corresponding unit normal vectors. Denote by $\beta$ the angle between $\boldsymbol n_1$ and $\boldsymbol n_2$. Without loss of generality, we may assume that $\beta\in(0,\frac{\pi}{2}]$. Also, let $\alpha_1,\alpha_2$ be the angles between $\sigma(H_1),\sigma(H_2)$ and $T^\perp$, respectively. Finally, denote by $\tilde{\boldsymbol n}_1\colonequals(\cos\alpha_1\boldsymbol n_1,-\sin\alpha_1),\tilde{\boldsymbol n}_2\colonequals(\cos\alpha_2\boldsymbol n_2,-\sin\alpha_2)$ the unit normal vectors of $\sigma(H_1),\sigma(H_2)$, respectively.\\

\begin{figure}
\includegraphics{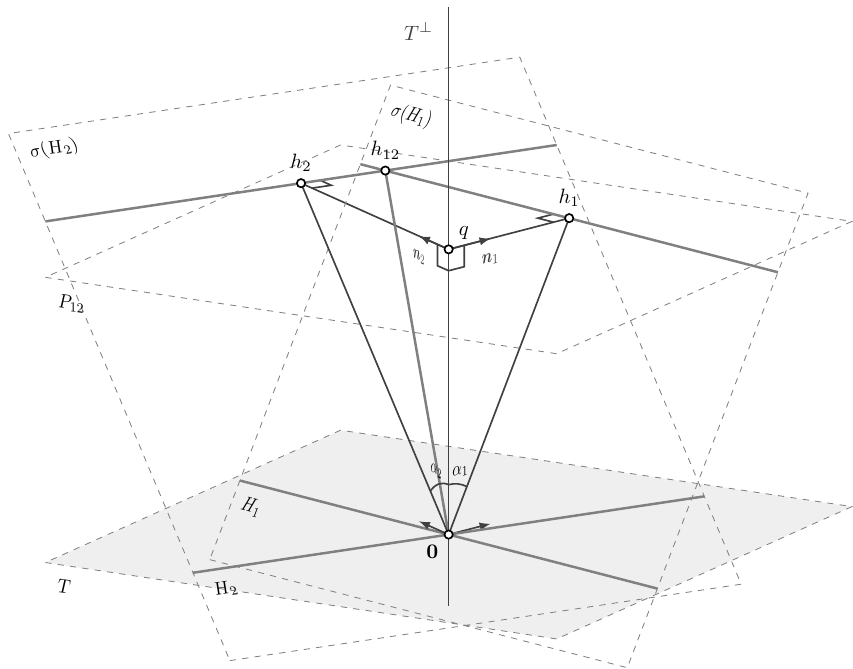}
\caption{Notations used in the proof of \thref{lem:08}}
\label{fig:06}
\end{figure}

The canonical metric $d$ on the Grassmannian of $1$-codimensional hyperplanes is simply the measure of the dihedral angle (cf. \cite[\S 4]{Wong}). Since the map $\sigma:\Gr_2(T)\to\Gr_3(\mathbb R^4)$ is Lipschitz continuous, we have that
$$\frac{d(\sigma(H_1),\sigma(H_2))}{d(H_1,H_2)}=\frac{\angle\tilde{\boldsymbol n}_1\tilde{\boldsymbol n}_2}{\angle\boldsymbol n_1\boldsymbol n_2}$$
is bounded. This, in turn, is equivalent to the fact that
$$\frac{1-\cos\angle\tilde{\boldsymbol n}_1\tilde{\boldsymbol n}_2}{1-\cos\angle\boldsymbol n_1\boldsymbol n_2}=\frac{1-\tilde{\boldsymbol n}_1\cdot\tilde{\boldsymbol n}_2}{1-\boldsymbol n_1\cdot\boldsymbol n_2}$$
is bounded. Indeed,
$$\frac{1-\cos\angle\tilde{\boldsymbol n}_1\tilde{\boldsymbol n}_2}{1-\cos\angle\boldsymbol n_1\boldsymbol n_2}=\frac{1-\cos\angle\tilde{\boldsymbol n}_1\tilde{\boldsymbol n}_2}{(\angle\tilde{\boldsymbol n}_1\tilde{\boldsymbol n}_2)^2}\cdot\frac{(\angle\boldsymbol n_1\boldsymbol n_2)^2}{1-\cos\angle\boldsymbol n_1\boldsymbol n_2}\cdot\frac{(\angle\tilde{\boldsymbol n}_1\tilde{\boldsymbol n}_2)^2}{(\angle\boldsymbol n_1\boldsymbol n_2)^2}$$
is a~product of bounded functions. Furthermore, we can rewrite
$$\frac{1-\tilde{\boldsymbol n}_1\cdot\tilde{\boldsymbol n}_2}{1-\boldsymbol n_1\cdot\boldsymbol n_2}=\frac{1-\cos\alpha_1\cos\alpha_2\cos\beta-\sin\alpha_1\sin\alpha_2}{1-\cos\beta}=1-\sin\alpha_1\sin\alpha_2+\frac{1-\cos(\alpha_1-\alpha_2)}{1-\cos\beta}\cos\beta,$$
which implies that
$$\frac{1-\cos(\alpha_1-\alpha_2)}{1-\cos\beta}$$
is bounded, and, in consequence, that
$$\frac{\sin^2(\alpha_1-\alpha_2)}{\sin^2\beta}=\frac{1-\cos(\alpha_1-\alpha_2)}{1-\cos\beta}\cdot\frac{1+\cos(\alpha_1-\alpha_2)}{1+\cos\beta}$$
is bounded.\\

Now, let $q\in T^\perp$ be any point in the orthogonal complement of $T$ and let $P_{12}=\langle\boldsymbol n_1,\boldsymbol n_2\rangle+q$ be the $2$-dimensional affine plane parallel to both $\boldsymbol n_1$ and $\boldsymbol n_2$ and passing through $q$ (see \fref{fig:06}). Denote by $h_1,h_2,h_{12}$ the intersections $\sigma(H_1)\cap(\langle\boldsymbol n_1\rangle+q),\sigma(H_2)\cap(\langle\boldsymbol n_2\rangle+q),\sigma(H_1)\cap\sigma(H_2)\cap(\langle\boldsymbol n_1,\boldsymbol n_2\rangle+q)$, respectively. Observe that
$$\tan d(\sigma(H_1)\cap\sigma(H_2),T^\perp)=\frac{\|h_{12}-q\|}{\|q\|}.$$
Since $\|h_{12}-q\|$ happens to be the diameter of the circle circumscribed on the quadrilateral $\square qh_1h_{12}h_2\subset P_{12}$, its length can be computed using the laws of sines and cosines:
$$\|h_{12}-q\|^2=\frac{\|h_1-h_2\|^2}{\sin^2\beta}=\frac{\|h_1-q\|^2+\|h_2-q\|^2-2\|h_1-q\|\|h_2-q\|\cos\beta}{\sin^2\beta}.$$
Hence
\begin{align*}
&\frac{\|h_{12}-q\|^2}{\|q\|^2}=\frac{\tan^2\alpha_1+\tan^2\alpha_2-2\tan\alpha_1\tan\alpha_2\cos\beta}{\sin^2\beta}=\frac{(\tan\alpha_1-\tan\alpha_2)^2+2\tan\alpha_1\tan\alpha_2(1-\cos\beta)}{\sin^2\beta}\\
&\quad=\frac{1}{\cos^2\alpha_1\cos^2\alpha_2}\cdot\frac{\sin^2(\alpha_1-\alpha_2)}{\sin^2\beta}+2\tan\alpha_1\tan\alpha_2\cdot\frac{1-\cos\beta}{\sin^2\beta}
\end{align*}
is bounded. Indeed, since we assumed that $\sigma(H)\cap T=H$, both $\alpha_1$ and $\alpha_2$ are different from $\frac{\pi}{2}$, and since $\sigma$ is a~continuous map defined on a~compact domain $\Gr_2(T)$, they are actually bounded away from $\frac{\pi}{2}$. Thus, all the functions
$$\frac{1}{\cos^2\alpha_1},\quad\frac{1}{\cos^2\alpha_2},\quad\frac{\sin^2(\alpha_1-\alpha_2)}{\sin^2\beta},\quad\tan\alpha_1,\quad\tan\alpha_2,\quad\frac{1-\cos\beta}{\sin^2\beta}$$
are bounded. It follows that $d(\sigma(H_1)\cap\sigma(H_2),T^\perp)$ is itself bounded away from $\frac{\pi}{2}$. This concludes the proof.
\end{proof}

Since $p$ is an interior point of $K$, there exists an open slab $S$ parallel to $T$ such that $C\cap S\subset K\cap S$. In particular, there exists $\tau_*\in\mathbb R$ such that the corresponding convex body $K_{\tau_*}\subset T'$ satisfies the assumption of \thref{thm:05}, whence it follows that it is centrally symmetric with respect to some point $c_*\in T'$. Now, consider any convex body $K_\tau$, $\tau\in\mathbb R$, and let $L\in\Gr_1(T')+c_*$ be any line passing through $c_*$. The continuous map $\sigma:\Gr_2(T)\to\Gr_3(\mathbb R^4)$ gives rise to an odd, continuous map $\delta:\mathbb S^2\to\mathbb R$, defined as in \thref{thm:04} with respect to the origin $c_*$. The restriction of $\delta$ to the equator $\mathbb S^2\cap L^\perp\cong\mathbb S^1$ is again an odd, continuous map, and hence it attains the value $0$. It follows that there exists a~hyperplane $H\in\Gr_2(T)$ such that $\sigma(H)+p$ contains $L$. Since $c_*$ is the center of symmetry for $K_{\tau_*}$, it must be also the center of symmetry for $K_{\tau_*}\cap(\sigma(H)+p)$. But the latter coincides with the center of symmetry for $K_\tau\cap(\sigma(H)+p)$, which implies that $c_*$ is the midpoint of $K_\tau\cap L$. Because $L$ was arbitrary, $c_*$ turns out to be the common center of symmetry for all the convex bodies $(K_\tau)_{\tau\in\mathbb R}$. This concludes the proof.
\end{proof}

\begin{remark}
The above argument is quite general. All the seemingly superfluous assumptions (i.a., the restriction on the dimension) were dictated by \thref{thm:05}. Although the latter can most likely be generalized to an arbitrary dimension $n\geq 3$, the strong topological condition on the family of hyperplanes remains crucial for the proof of both \thref{thm:05} and \thref{thm:16}. In particular, it entails that the family of hyperplanes $\sigma(\Gr_2(T))$ thoroughly sweeps the entire $K$.
\end{remark}

\begin{remark}
The same argument proves \thref{thm:16} for any point $p\in\mathbb R^4$ and any hyperplane $T\in\Gr_3(\mathbb R^4)$ such that $K-p$ absorbs the union of two open half-spaces $\mathbb R^4\setminus T$ (i.e., $\bigcup_{0<r<\infty}r(K-p)\supseteq\mathbb R^4\setminus T$). In particular, \thref{thm:16} holds for any smooth point $p\in\partial K$ and $T$ being the hyperplane tangent to $\partial K$ at $p$.
\end{remark}

Secondly, we will prove \thref{thm:02}.

\begin{proof}[Proof of \thref{thm:02}]
The argument is along the same lines. Since $\sigma$ is a~continuous map defined on a~compact domain $\Gr_{n-2}(T)$, the angle between $\sigma(H)$ and $T^\perp$ is bounded away from $\frac{\pi}{2}$ and hence there exists a~blunt right circular cone $C$ with axis $T^\perp$ such that $\sigma(H)\cap C\neq\emptyset$ for every $H\in\Gr_{n-2}(T)$. Furthermore, since $p$ is an interior point of $K$, there exists an open slab $S$ parallel to $T$ such that $C\cap S\subset K\cap S$. In particular, there exists $\tau_*\in\mathbb R$ such that the corresponding convex body $K_{\tau_*}\subset T'$ satisfies the assumption of \thref{thm:20}, whence it follows that it is an ellipsoid centered at some point $c_*\in T'$. The continuous map $\sigma:\Gr_{n-2}(T)\to\Gr_{n-1}(\mathbb R^n)$ gives rise to an odd, continuous map $\delta:\mathbb S^{n-2}\to\mathbb R$, defined as in \thref{thm:04} with respect to the origin $c_*$, which attains the value $0$. It follows that there exists a~hyperplane $H_*\in\Gr_{n-2}(T)$ such that $\sigma(H_*)+p$ contains $c_*$. Now, consider any convex body $K_\tau$, $\tau\in\mathbb R$, and let $L\in\Gr_1(T')+c_*$ be any line passing through $c_*$. By the initial symmetry assumption, the intersections $K_{\tau_*}\cap(\sigma(H_*)+p)$ and $K_\tau\cap(\sigma(H_*)+p)$ are ellipsoids homothetic with respect to the common center $c_*$, with some ratio $\lambda_*$. Again, there exists a~hyperplane $H\in\Gr_{n-2}(T)$ such that $\sigma(H)+p$ contains $L$. The intersections $K_{\tau_*}\cap(\sigma(H)+p)$ and $K_\tau\cap(\sigma(H)+p)$ are likewise ellipsoids homothetic with respect to the common center $c_*$, with some ratio $\lambda$. But on the subspace $(\sigma(H_*)+p)\cap(\sigma(H)+p)$ the ratio is already known to be $\lambda_*$, which implies that $\lambda=\lambda_*$ and the intersections $K_{\tau_*}\cap L$ and $K_\tau\cap L$ are homothetic with respect to the common center $c_*$, with ratio $\lambda_*$. Because $L$ was arbitrary, all the convex bodies $(K_\tau)_{\tau\in\mathbb R}$ turn out to be ellipsoids centered at $c_*$ and homothetic to $K_{\tau_*}$. This concludes the proof.
\end{proof}

\subsection{Proof of \thref{thm:21}}

Finally, we will prove \thref{thm:21}, from which \thref{thm:22} is a~straightforward corollary.

\begin{proof}[Proof of \thref{thm:21}]
The argument is along the lines of \thref{thm:08}. First, if $\dim T=n$, then, by assumption, every intersection $K\cap H$ of $K$ with an affine hyperplane $H\supset P$ is a~body of $k$-reflection through the hyperaxis $P$, whence $K$ itself is a~body of $k$-reflection through the hyperaxis $P$ and the assertion follows. Hence, without loss of generality, we may assume that $\dim(T)\leq n-1$.\\

Fix any affine hyperplane $H_*\supset P+\vec T^\perp$ and any affine line $L$ perpendicular to $H_*$ and intersecting it at some point $p_2\in H_*\setminus(P+\vec T^\perp)\setminus T$. Let $p_0\in P$ and $p_1\in P+\vec T^\perp$ be affinely independent with $p_2$. Let $\vec N$ be the normal space to the affine line spanned by $p_0,p_2$ in the affine plane spanned by $p_0,p_1,p_2$, and let $H\colonequals P+\vec N^\perp$. It is easy to see that $L\subset H$. Indeed, they are both perpendicular to $H_*$ and contain $p_2$. It follows that $L$ is parallel to $\vec P^\perp\cap\vec T\cap\vec H$.\\

Now, by assumption, $K\cap H$ is a~body of $k$-reflection, with hyperaxis of $k$-reflection containing $P$ and hyperplane of $k$-reflection contained in $T\cap H$. But since $P\subset T\cap H$ and the hyperaxis of $k$-reflection is orthogonal to the hyperplane of $k$-reflection, the tangent space to the latter is contained in $\vec P^\perp\cap\vec T\cap\vec H$. Furthermore, since $p_2\in H\setminus T$, we have $H\neq T$ and consequently
\begin{align*}
\dim(\vec P^\perp\cap\vec T\cap\vec H)&=\dim(\vec T\cap\vec H)-\dim(\vec P)=(\dim(\vec T)-1)-\dim(\vec P)=(n-\dim(\vec T^\perp))-1-\dim(\vec P)\\
&=(n-1)-\dim(\vec P+\vec T^\perp)=(n-1)-k.
\end{align*}
A simple comparison of dimensions shows that the tangent space to the hyperplane of $k$-reflection is actually equal to $\vec P^\perp\cap\vec T\cap\vec H$. Hence, the hyperaxis of $k$-reflection is equal to the orthogonal complement of the latter in $H$, namely $P+\langle\vec T^\perp,\vec N\rangle\cap\vec N^\perp$.\\

Since $p_1-p_0\notin\vec N$, we have $p_2-p_0\in\langle\vec T^\perp,\vec N\rangle\cap\vec N^\perp$ and thus $p_2$ belongs to the hyperaxis of $k$-reflection. Now, recall that $L$ is parallel to the hyperplane of $k$-reflection, whence $K\cap L$ is invariant under reflection through the hyperplane $H_*$. Because the choice of $L$ from a~dense open subset was arbitrary, it follows that $K$ itself is invariant under reflection through the hyperplane $H_*$. Since reflections through affine hyperplanes containing $P+\vec T^\perp$ generate the group of all orthogonal maps fixing $P+\vec T^\perp$, this concludes the proof.
\end{proof}

\backmatter

\section{Concluding notes}\label{sec:11}

In this section, we have collected a~number of related remarks that may be of interest but would distract from the main exposition if they were placed throughout the paper. We have sorted these remarks by their corresponding section in the paper.

\subsection{sec:14}

A keen observer may notice that the orbit of any point $\boldsymbol x\in\mathbb R^4$ under the action of \eqref{eq:06} is precisely the sphere $\mathbb S^3(\mathbf 0,\|\boldsymbol x\|)$. Hence, if $K$ is invariant under the action of $\SO(3,\mathbb R)$ by representation \eqref{eq:06}, then it must be invariant also under the canonical action of $\O(4,\mathbb R)$, which contains \eqref{eq:06} as a~subgroup. In particular, \eqref{eq:06} is not a~symmetry group of any convex body $K$. So a~natural question arises:

\begin{question}
Which subgroups of $\O(\mathbb R^n)$ are symmetry groups of some convex bodies $K\subset\mathbb R^n$?
\end{question}

\noindent As the above example seems to indicate, they should have a~much simpler structure than general closed subgroups of the orthogonal group. Actually, we believe that the following conjecture is true:

\begin{conjecture}\thlabel{con:01}
Let $K\subset\mathbb R^n$, $n\geq 3$, be a~convex body. Then the symmetry group $G<\O(\mathbb R^n)$ of $K$ can be written as a~set-theoretic union
$$G=A_1\cup A_2\cup\cdots\cup A_s\cup B_1\cup B_2\cup\cdots\cup B_t,$$
where $A_i\leq G$ is a~subgroup conjugate to $\pi_{k_i}$, $i=1,2,\ldots,s$ and $B_i\leq G$ is a~subgroup conjugate to $\rho_{k_i}$, $i=1,2,\ldots,t$.
\end{conjecture}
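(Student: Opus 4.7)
The plan is to combine the structure theory of compact Lie subgroups of $\O(n,\mathbb R)$ with the geometric constraints arising from preservation of a convex body. First I would reduce to the identity component $G_0$ of $G$: since $G/G_0$ is finite, each coset can be treated separately once $G_0$ is understood. By Maschke's theorem for compact groups, $\mathbb R^n$ decomposes as an orthogonal direct sum $V_1\oplus\cdots\oplus V_m$ of $G$-irreducible real subrepresentations, and the canonical subgroups $\pi_k,\rho_k$ of $\O(n,\mathbb R)$ admit transparent descriptions in terms of such decompositions (reflections through, or full rotation groups about, specific coordinate subspaces built from the $V_j$).

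Next, for each element $g\in G$, I would analyze its spectral decomposition and its fixed-point subspace $F_g$. If $g$ is an involution, then $g$ generates a subgroup orthogonally conjugate to $\pi_{\dim F_g}$, which lies in $G$ trivially, so the involutive case is essentially automatic. For non-involutive $g$, the task is to embed $g$ into a subgroup of $G$ orthogonally conjugate to some $\rho_k$, which requires exhibiting a $k$-dimensional subspace $V$ that $g$ fixes pointwise and such that the whole orthogonal group $\O(V^\perp)$, extended by the identity on $V$, actually lies in $G$. The convexity hypothesis would have to enter exactly here --- presumably via the John ellipsoid of $K$ and a study of orbit maps through generic boundary points --- to force $G$ to contain full rotation groups about sufficiently many axes.

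The main obstacle --- the part I expect to be genuinely difficult --- is precisely this non-involutive case. A sanity check on the product of two planar disks of distinct radii in $\mathbb R^4$, whose symmetry group equals $\O(2)\times\O(2)$, suggests that the conjecture as literally stated may in fact fail: the generic element $(R_\theta,R_\phi)$ with $0<\theta,\phi<\pi$ and $\theta\neq\phi$ has determinant $+1$, fixes only the origin, and is therefore not contained in any subgroup of $\O(2)\times\O(2)$ that is $\O(4)$-conjugate to some $\pi_k$ or $\rho_k$ --- indeed the only $\rho_2$-subgroups of $\O(2)\times\O(2)$ are the two coordinate $\O(2)$ factors, and higher-dimensional $\rho_k$-subgroups do not fit inside $G$ at all. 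Consequently, any genuine proof attempt should probably begin by refining the statement --- for example, by permitting the $B_i$ to be conjugate to direct products of several $\rho_{k_j}$'s, or by including connected rotation groups $\SO(n-k)$ among the allowed building blocks --- after which the Peter--Weyl decomposition machinery sketched above has a realistic chance of succeeding, with the case analysis of irreducible summands (trivial, sign, real of quaternionic type, etc.) replacing the ad hoc element-by-element argument.
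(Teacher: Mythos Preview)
The statement you were given is a \emph{conjecture} in the paper, not a theorem: the authors explicitly present it as something they ``believe'' to be true and offer no proof. So there is no argument in the paper to compare your proposal against.

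More to the point, the counterexample you found in your last paragraph is correct and shows that the conjecture, as literally stated, is false. Take $K=D(r_1)\times D(r_2)\subset\mathbb R^4$ with $r_1\neq r_2$. Its L\"owner ellipsoid is $\{(x^2+y^2)/(2r_1^2)+(z^2+w^2)/(2r_2^2)\le 1\}$, whose symmetry group is $\O(2)\times\O(2)$; hence the symmetry group of $K$ is exactly $G=\O(2)\times\O(2)$. Every subgroup of $G$ conjugate in $\O(4)$ to some $\pi_k$ consists of involutions, and the only subgroups of $G$ conjugate to some $\rho_k$ with $k\le 2$ are the two coordinate $\O(2)$ factors (a $\rho_2$-type subgroup is the full pointwise stabilizer of a $2$-plane $V$, and this lies in $\O(2)\times\O(2)$ only when $V$ is one of the coordinate planes; $\rho_1\cong\O(3)$ and $\rho_0\cong\O(4)$ are too large to embed in the $2$-dimensional group $G$). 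The element $(R_\theta,R_\phi)$ with $\theta,\phi\in(0,\pi)$ is not an involution and lies in neither coordinate factor, so it is covered by no $A_i$ or $B_i$ of the required form. Your proposed repair---enlarging the allowed building blocks to include direct products of several $\rho_{k_j}$'s acting on orthogonal summands---is exactly what this example demands, and the representation-theoretic outline you sketched (decompose $\mathbb R^n$ into $G$-isotypic pieces and read off the block structure) is the natural route to a corrected statement.
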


\noindent In particular, this would mean that the two families of representations from \thref{def:04} are undoubtedly the most interesting ones. But \thref{con:01} has much more far-reaching consequences. Without loss of generality, we may assume that $G\leq\O(\mathbb R^n)$. Denote by $V_i^\perp$ the hyperaxis of $k_i$-revolution for $B_i$ and suppose that $k_i=k_j$, $i\neq j$, are minimal. Because the sum is finite and $gV_i^\perp$ is the hyperaxis of $k_i$-revolution for $gB_ig^{-1}\leq G$ for every $g\in B_j$, $V_i^\perp$ must be an invariant subspace for $B_j$. Since $V_i^\perp\neq V_j^\perp$, we have $V_i^\perp\supseteq V_j$. Thus, the number of subgroups $B_i$ with minimal $k$ can be at most $\lfloor n/(n-k)\rfloor$. For $k=1$, the result was already proved by G.~Bor, L.~Hern\'andez-Lamoneda, V.~Jim\'enez de Santiago and L.~Montejano \cite[Lemma~2.3]{10.2140/gt.2021.25.2621}, and for $k=2$, it is an immediate consequence of the result proved by B.~Zawalski \cite[Lemma~4.5]{Zawalski2024}. \thref{con:01} would also imply a~positive answer to another question \cite[Question~4.8]{Zawalski2024} raised by Zawalski, with a~much better estimate.

\subsection{sec:15}

In light of our results, it is still worth considering weaker variants of the problems discussed in \sref{sec:01} in which we assume the symmetry of more sections than just those passing through a~fixed point. The following definition, phrased already in our language, generalizes the concept of a~false axis of revolution (cf. \cite[Introduction]{faor}):

\begin{definition}\thlabel{def:06}
Let $K\subset\mathbb R^n$, $n\geq 3$, be a~convex body and let $L\in\Graff_1(\mathbb R^n)$ be any affine line in the ambient space. We say that $L$ is a~\emph{pseudo-axis of (affine) $k$-reflection} (resp. \emph{$k$-revolution}), $0\leq k<n-1$, if every point $p\in L$ is a~pseudo-center of (affine) $k$-reflection (resp. $k$-revolution). Similarly, we say that $L$ is a~\emph{quasi-axis of (affine) $k$-reflection} (resp. \emph{$k$-revolution}), $0\leq k<n-1$, if every point $p\in L$ is a~quasi-center of (affine) $k$-reflection (resp. $k$-revolution).
\end{definition}

\noindent If $K$ admits a~pseudo-axis of (affine) $k$-reflection (resp. $k$-revolution), then all the intersections $K\cap H$ of $K$ with affine hyperplanes $H\in\Graff_{n-1}(\mathbb R^n)$ are bodies of (affine) $k$-reflection (resp. $k$-revolution). Clearly, if $K$ is a~body of (affine) $k$-reflection (resp. $k$-revolution), then every line contained in the hyperaxis of $k$-reflection (resp. $k$-revolution) is a~quasi-axis of $k$-reflection (resp. $k$-revolution).\\

Although it may seem greatly overdetermined, the variant of \thref{con:06} with a~pseudo-axis is not inferior to the variant with a~quasi-center in this respect, as there is no material implication between them. Similarly, the variant with a~quasi-axis is not inferior to the variant with an aligned quasi-center (cf. \fref{fig:05}).\\

\begin{figure}
\begin{tikzpicture}
\tikzset{every edge/.append style = {-{Implies}, double distance = 1.5pt}}
\node (A) at (0,-0) {shaken center};
\node (B) at (0,-2) {pseudo-center};
\node (C) at (-3,-4) {pseudo-axis};
\node (D) at (+3,-4) {quasi-center};
\node (E) at (0,-4) {quasi-axis};
\node (F) at (0,-6) {aligned quasi-center};
\path (A) edge (B) (B) edge (C) (C) edge (E) (B) edge (D) (D) edge (E) (C) edge (F) (D) edge (F);
\end{tikzpicture}
\caption{The implication graph representing relations between different variants of \thref{con:06}}
\label{fig:05}
\end{figure}

For the sake of simplicity, we have defined all the concepts in the affine space, but it is quite natural to consider pseudo- and quasi-centers also in the projective space. Needless to say, all the aforementioned problems have their projective counterparts, but little is known about them so far, and we do not intend to reflect on them here.

\subsection{sec:01}

The theorems presented in \sref{sec:01}, originally written over several centuries and from different points of view, can be readily rephrased in the unified language we propose. For example, noting that a~body of $0$-reflection is simply centrally symmetric, the False Center Theorem reads:

\begin{custom}{\thnameref{thm:11}~\ref{thm:11}}
Let $K\subset\mathbb R^n$, $n\geq 3$, be a~convex body, and let $p\in\mathbb R^n$ be any point of the ambient space. If $p$ is a~pseudo-center of $0$-reflection for $K$, then either $K$ is centrally symmetric with respect to the point $p$ or $K$ is an ellipsoid.
\end{custom}

\noindent In this formulation, \thref{thm:11} gives an affirmative to \thref{con:12} for $k=0$. Similarly, this time noting that a~body of affine $0$-revolution is simply an ellipsoid, Brunn's theorem (i.e. \thref{thm:03}) gives an affirmative answer to \thref{con:04} for $k=0$, whereas Bezdek's conjecture (i.e. \thref{con:02,con:03}) is equivalent to \thref{con:04} for $k=1$.\\

Although the original motivation for the general problem should rather be sought in classical theorems characterizing ellipsoids, the same question emerged independently in more contemporary works of R.~Gardner and V.P.~Golubyatnikov, who considered two convex bodies $K_1,K_2\subset\mathbb R^n$ with congruent sections by hyperplanes passing through the origin and asked how different these bodies can be, and which transformations of the ambient space can transform these bodies into each other (cf. \cite[Introduction]{Golubyatnikov}).\\

In his geometric monograph, Gardner infers Rogers' \thref{thm:01} \cite[Corollary~7.1.3]{gardner1995geometric} from a~more general theorem in which we assume that $K_1\cap H$ is a~translate of $K_2\cap H$ for every hyperplane $H\in\Gr_{n-1}(\mathbb R^n)$ (cf. \cite[Theorem~7.1.1]{gardner1995geometric}). Furthermore, he also infers the False Center \thref{thm:11} \cite[Corollary~7.1.10]{gardner1995geometric} from another (difficult) theorem in which we assume that $K_2$ is a~translate of $K_1$ and $K_1\cap H$ is homothetic to $K_2\cap H$ for every hyperplane $H\in\Gr_{n-1}(\mathbb R^n)$ (cf. \cite[Theorem~7.1.7]{gardner1995geometric}). For a~much more detailed account, we refer the reader to the notes after Chapter 7 \cite[Notes 7.1 and 7.2]{gardner1995geometric}.\\

Golubyatnikov, concerned with generalizing the classical S\"uss' lemma \cite[\S3]{Sss1932ZusammensetzungVE}, proved along the way that if $K_1\cap H$ is spirally similar to $K_2\cap H$ for every hyperplane $H\in\Gr_2(\mathbb R^3)$, then $K_1$ and $K_2$ are homothetic, possibly with negative homothety coefficient, provided that $K_1\cap H$ has no orientation-preserving symmetry for every hyperplane $H\in\Gr_2(\mathbb R^3)$ (cf. \cite[Theorems~2.1.3 and 3.1.4]{Golubyatnikov}). The result was further extended by M.~Angeles Alfonseca, M.~Cordier, and D.~Ryabogin to the case of $H\in\Gr_3(\mathbb R^n)$ (cf. \cite[Corollaries~1~and~2]{ACR1}) and $H\in\Gr_4(\mathbb R^n)$ (cf. \cite[Corollaries~1~and~2]{ACR2}). It is amazing that the asymmetry condition can not be removed, as was shown by the counterexamples constructed by C.M.~Petty and J.R.~McKinney \cite[(3.1.1)]{Golubyatnikov}. Otherwise, there may be no continuous selection of the rotation angle, which is essential for the proof. It is therefore the bodies with symmetric sections that pose the greatest challenge.\\

Interestingly, the counterpart of the general problem can also be found in affine differential geometry. Namely, if $K$ is a~body of affine $k$-revolution, then the Blaschke structure of $\partial K$ \cite[Definition~II.3.2]{nomizu1994affine}, being the most fundamental algebraic invariant determining the hypersurface uniquely up to an equi-affine automorphism of the ambient space \cite[\S II.8]{nomizu1994affine}, is invariant under the pointwise action of a~group affinely conjugate to $\rho_k$ (cf. \cite[Claim~3.4]{Zawalski2024}). Clearly, an affine hypersurface invariant under the pointwise action of either $\pi_0$ or $\rho_0$ must be a~quadric, which follows immediately from the classical result of H.~Maschke, G.A.~Pick and L.~Berwald \cite[Theorem~II.4.5]{nomizu1994affine}. Y.~Lu and C.~Scharlach (2005) characterized all the $3$-dimensional affine hypersurfaces invariant under the pointwise action of any admissible group except for $\O(1,\mathbb R)$ \cite[\S 3, \S 4]{Lu2005}. As it turns out, they may take form, e.g., of a~warped product of a~$2$-dimensional quadric and an arbitrary curve, thus being a~kind of \enquote{shaken} bodies of affine $1$-revolution. K.~Schoels (2013) further developed the result of Lu and Scharlach by characterizing all affine hypersurfaces of arbitrary dimension invariant under the pointwise action of $\rho_1$ \cite[Theorem~3.1]{Schoels2013}.

\subsection{sec:08}

Remarkably, G.R.~Burton (1976) emphasized that the strict convexity assumption is necessary when considering only sections close to the boundary (e.g., in \thref{thm:03}). Generalizing a~classical characterization of the ellipsoid, he showed that all the intersections $K\cap H$ of a~convex body $K$ with affine hyperplanes $H$ sufficiently close to the boundary are centrally symmetric if and only if $K$ is the Minkowski sum of a~(not necessarily $n$-dimensional) zonotope and an ($n$-dimensional) ellipsoid:

\begin{theorem}[{cf. \cite[Theorem]{Burton1976}}]
Let $K\subset\mathbb R^n$, $n\geq 3$, be a~convex body. Then for every $\boldsymbol\xi\in\mathbb S^{n-1}$ there exists $\varepsilon(\boldsymbol\xi)>0$ such that $K\cap H_{\boldsymbol\xi}^\delta$ is centrally symmetric for every $0<h_K(\boldsymbol\xi)-\delta(\boldsymbol\xi)<\varepsilon(\boldsymbol\xi)$ if and only if $K$ is the Minkowski sum of a~zonotope and an ellipsoid.
\end{theorem}

\noindent In particular, if $K$ is strictly convex, then $K$ is an ellipsoid \cite[Corollary]{Burton1976}. The function $\varepsilon:\mathbb S^{n-1}\to\mathbb R$ is by no means unique. Note that while $\varepsilon$ can be chosen to be upper semicontinuous, it may not be possible to make it continuous. Therefore, there may exist no continuous function $\delta:\mathbb S^{n-1}\to\mathbb R$ satisfying $0<h_K(\boldsymbol\xi)-\delta(\boldsymbol\xi)<\varepsilon(\boldsymbol\xi)$. It shows that the continuity assumption in Shaken False Center \thref{thm:04} is likewise necessary.\\

C.A.~Rogers rigorously proved \thref{thm:01} only for $k=2$ and claimed without proof that for $3\leq k\leq n$ the corresponding results for $k$-dimensional sections follow immediately from the proof of \cite[Theorem~4]{Rogers1965}. It is notable that in the case of orthogonal projections, the corresponding results for $k$-dimensional sections are an immediate consequence of an analogous result \cite[Theorem~2]{Rogers1965}. For completeness, we give here a~simple argument that derives \thref{thm:01} for arbitrary $3\leq k<n$ directly from the original \cite[Theorem~4]{Rogers1965}.

\begin{proof}[Proof of Rogers' remark]
Assume that all $k$-dimensional sections of $K$ passing through the origin are centrally symmetric, and let $P$ be any $2$-dimensional plane passing through the origin. Suppose that $K\cap P$ is not centrally symmetric. Then, for every subspace $H\in\Gr_{k-2}(P^\perp)$, the (unique) center of symmetry of $K\cap(H\oplus P)$ does not belong to $P$, whence its orthogonal projection onto $H$ is not at the origin. This gives rise to a~non-vanishing continuous tangent vector field on $\Gr_{k-2}(P^\perp)$, which contradicts the Poincar\'e-Hopf theorem \cite[p.~282]{do2016differential}, since the Euler characteristic of the Grassmannian is non-zero. Hence, $K\cap P$ must be centrally symmetric. Because the choice of $P$ was arbitrary, the hypothesis of \cite[Theorem~4]{Rogers1965} is satisfied.
\end{proof}

\subsection{sec:03}

In the affine variant of Bezdek's conjecture, the compactness assumption is essential. Indeed, consider the unbounded, convex set
$$K=\{(x,y,z)\in\mathbb R_+^3\mid xyz\geq 1\}.$$
A simple, yet tedious argument shows that all sections of $K$ by generic affine planes admit the affine symmetry group isomorphic to $\D_3$ (i.e., the symmetry group of a~regular triangle), while all other sections admit the affine symmetry group isomorphic to $\D_1$ (i.e., the cyclic group of order $2$). Hence, $K$ more than satisfies the hypothesis of \thref{con:03}. However, it is clearly not a~body of affine $1$-revolution, as it does not even admit an elliptical section. We have strong reasons to believe that $K$ is the only non-trivial non-compact counterexample to \thref{con:03}.\\

As it was shown in \cite[Proposition~4.5]{10.1093/imrn/rnx211}, a~symmetric convex body whose all central sections are invariant under the action of the coordinate reflection group $(\mathbb Z_2)^{n-1}$ (i.e., are unconditional) need not be either a~body of revolution nor an ellipsoid. Indeed, for every symmetric convex body $K$, central sections of $K$ and $(K^\circ+B)^\circ$ share the same groups of symmetries. Now, for any generic ellipsoid $E$, we can find a~ball $B$ such that the Minkowski sum $E+B$ is not an ellipsoid. On the other hand, such $E+B$ is clearly not a~body of revolution, but nevertheless all its projections are unconditional. The same argument shows that in Bezdek's \thref{con:02} it is not enough to consider only planes passing through a~fixed point, and demonstrates the necessity of assuming that $p$ is not the center of symmetry for $K$ in \thref{con:12,con:04}. However, it does not provide a~simple counterexample to those conjectures whose formulations do not feature a~dichotomy. For instance, if all central sections of $(E^\circ+B)^\circ$ are bodies of $k$-revolution, then all central sections of $E$ are likewise bodies of $k$-revolution, whence, by \thref{lem:07}, $E$ is an ellipsoid of $k$-revolution, and thus $(E^\circ+B)^\circ$ is likewise a~body of $k$-revolution.

\subsection{sec:02}

If, in addition to being a~body of (affine) $k$-revolution with hyperplane of (affine) revolution $T$, $K$ is also centrally symmetric with respect to the point $o$, then for every subspace $V\subseteq T$, every intersection $K\cap H$ of $K$ with an affine hyperplane $H\in\Gr_{n-1}(\mathbb R^n)+o$ is invariant under the action of a~group of (affine) reflections with respect to the hyperaxis $(V+o)\cap H$. Observe that this is precisely the case in \cite[Theorem~2~(i)]{Alfonseca}, where $n=3$ and $k=1$. However, this concept of alignment is somewhat dual to the one proposed in this paper. In \cite{Alfonseca}, the subrepresentation assumed to be trivial is $(V+o)\cap H$, while in our paper, it is the quotient representation. The dual concept emerged in the context of a~centrally symmetric body of revolution, and we believe that this is the only situation in which it occurs.\\

As if to confirm this fact, E.~Morales-Amaya recently showed in \cite[Theorem~1]{moralesamaya2025} that if $p\in\Int K$ is a~pseudo-center of $(-1)$-reflection, and there exists a~hyperplane $T\in\Gr_{n-1}(\mathbb R^n)$ such that $T+p$ is not a~hyperplane of symmetry of $K$, and every intersection $K\cap H$ of $K$ with an affine hyperplane $H\in\Gr_{n-1}(\mathbb R^n)+p$ is invariant under the action of a~group conjugate to $\pi_{n-1}$, with hyperaxis of reflection parallel to $T\cap H$, then $K$ is an ellipsoid of revolution, with axis of revolution orthogonal to $T$. In his argument, he concluded that $p$ must be a~pseudo-center of reflection, whence $K$ must be centrally symmetric, by virtue of the False Center \thref{thm:11}. Note that while $(-k)$-aligned quasi-centers and pseudo-centers of $k$-reflection exist for an arbitrary body of $k$-revolution, their dual counterparts exist (for now only conjecturally) exclusively for a~centrally symmetric body of $k$-revolution and for an ellipsoid of $k$-revolution, respectively.

\subsection{sec:09}

\thref{thm:16,thm:02} do not hold in dimension $n=3$. Consider the following counterexample: Let $K$ be the unit Euclidean ball centered at $p$, let $T=\langle\boldsymbol e_3\rangle^\perp$ be the horizontal plane, and let $\sigma:\Gr_1(T)\to\Gr_2(\mathbb R^3)$ be any smooth map such that $\sigma(H)\cap T=H$ for all $H\in\Gr_1(T)$. Every intersection $K\cap(\sigma(H)+p)$ is invariant under reflection through the axis that is the orthogonal projection of $\langle\boldsymbol e_3\rangle+p$ onto $\sigma(H)+p$. Now, consider an open slab $S$ parallel to $T$ such that $C\cap S\subset K\cap S$, where $C$ is the blunt right circular cone with axis $\langle\boldsymbol e_3\rangle$, arising from \thref{lem:08}. Through each point of $\partial K\cap S$ passes exactly one plane from $\sigma(\Gr_1(T))+p$, whence the shape of $\partial K\cap S$ is uniquely determined by the axes of symmetry and a~function $w:(\tau_1,\tau_2)\times\Gr_1(T)\to\mathbb R$ given by
$$w(\tau,H)\colonequals\Vol_1(K\cap(T+\tau\boldsymbol e_3)\cap(\sigma(H)+p)).$$
By adding to $w(\tau,H)$ a~compactly supported smooth perturbation that is small in $C^2$ norm, we obtain a~parametrization of another strongly convex body $K'$ whose selected sections share the same axes of symmetry with the sections of $K$, but $K'$ is not a~body of $1$-reflection (neither orthogonal nor affine).\\

In the proof of \thref{thm:02}, instead of \thref{thm:05} requiring a~function $\sigma$ defined on the Grassmannian $\Gr_{n-2}(T)$, we used \thref{thm:20} requiring a~function $\sigma$ defined on the \emph{unoriented} Grassmannian $\Gr^+_{n-2}(T)$. Note that since the unoriented Grassmannian $\Gr_2(T)$ is a~quotient space of the oriented Grassmannian $\Gr^+_2(T)$, by the universal property of the quotient map, every continuous map $\Gr_{n-2}(T)\to\Gr_{n-1}(\mathbb R^n)$ descends from a~unique continuous map $\Gr^+_{n-2}(T)\to\Gr_{n-1}(\mathbb R^n)$, but not the other way around. Therefore, the hypothesis of \thref{thm:02} could potentially be relaxed if only we did not use the topological properties of the unoriented Grassmannian later in the proof. In particular, we needed the key fact that such a~family of hyperplanes sweeps the entire boundary $\partial K$. Without it, the most we can hope for is that there exists a~body of (affine) $1$-revolution $L$ such that $K\cap(\sigma(H)+p)=L\cap(\sigma(H)+p)$ for every hyperplane $H\in\Gr_{n-2}(T)$.

\subsection{sec:05}

In the last section devoted to applications, S.P.~Olovyanishnikov proved that if $f:\mathbb R^n\to\mathbb R$, $n\geq 3$, is a~probability density function whose restriction to every straight line is symmetric and unimodal, then either $f$ is a~normalized indicator function of a~compact convex set or all the level sets of $f$ are concentric homothetic ellipsoids \cite[Prilozheniya~I]{Olov}. Note that this result follows immediately from \thref{lem:03}, which may therefore be viewed as its $2$-dimensional counterpart. Interestingly, Olovyanishnikov remarks that his proof does not work for $n=2$, in which case the theorem still holds, but requires a~more subtle argument \cite[Zamechaniye]{Olov}. For analytic curves, he refers to \cite[\S 17]{blaschke1923vorlesungen}, but he also claims that for general closed curves, it can be shown without any regularity assumption. However, we were unable to find any such proof in the literature. \thref{con:07} is a~strengthening of Olovyanishnikov's result.\\

Note that the conclusion of \thref{con:07} is clearly not true if, instead of all the affine hyperplanes, we consider an arbitrary $1$-codimensional family. Indeed, if both $K_1$ and $K_2$ are centrally symmetric with respect to the origin, then the hypothesis is satisfied for all the affine hyperplanes passing through the origin. Moreover, if $K_1$ is the body of flotation for $K_2$, then the hypothesis is satisfied for all the affine hyperplanes supporting $K_1$. On the other hand, if $K_0$ is a~body of flotation for $K_1$, then by \cite[Lemma and subsequent Zamechaniye~1 and Zamechaniye~2]{Olov} it is also a~body of flotation for $K_2$, which implies that it should be enough to consider the family of all the affine hyperplanes supporting $K_0$, as long as the following conjecture is true:

\begin{conjecture}
Let $K_1\subsetneq K_2\subsetneq K_3\subset\mathbb R^n$, $n\geq 2$, be pairwise different convex bodies. If $K_1$ is a~body of flotation for $K_2$ and $K_3$, and $K_2$ is a~body of flotation for $K_3$, then $K_1,K_2,K_3$ are concentric, homothetic ellipsoids.
\end{conjecture}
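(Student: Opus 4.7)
My plan is a two-stage reduction. First, I would use Dupin's classical theorem on bodies of flotation to transport the problem into the framework of \thref{con:07}. Second, I would exploit the extra third hypothesis (that $K_2$ itself is a body of flotation for $K_3$), together with Olovyanishnikov's \thref{thm:12}, to force the ellipsoid conclusion.

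\emph{First stage.} Recall Dupin's classical theorem: if $K$ is a body of flotation for a convex body $L$, then for every supporting hyperplane $H$ of $K$ at a tangency point $p\in\partial K$, the centroid of the section $L\cap H$ equals $p$. Applied to the pairs $(K_1,K_2)$ and $(K_1,K_3)$, this yields that for every supporting hyperplane $H$ of $K_1$, the centroids of $K_2\cap H$ and $K_3\cap H$ coincide (both equal the tangency point of $H$ on $K_1$). This is precisely the hypothesis of \thref{con:07}, albeit restricted to the $(n-1)$-parameter family of supporting hyperplanes of $K_1$, parameterized by outer unit normals on $\mathbb S^{n-1}$. If a suitable restricted analog of \thref{con:07} is established, then $K_2$ and $K_3$ are concentric homothets.

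\emph{Second stage.} Assuming $K_3=c\cdot K_2$ for some $c>1$ after translating the common center to the origin, I would apply Dupin once more, this time to the third hypothesis: for every supporting hyperplane $H'$ of $K_2$ at a tangency point $p'\in\partial K_2$, the centroid of $K_3\cap H'$ equals $p'$. Pulling this back through the homothety gives tight constraints on the centroids of a one-parameter family of parallel sections of $K_2$, from which one expects to derive central symmetry of each section $K_3\cap H'$. Because these hyperplanes $H'$ cut $K_3$ into caps of fixed volume fraction, Olovyanishnikov's \thref{thm:12} then forces $K_3$ to be an ellipsoid, whence $K_2$ and $K_1$ (being bodies of flotation for an ellipsoid) are concentric homothetic ellipsoids as well.

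\emph{Main obstacle.} The hard part will be the first stage, since \thref{con:07} is itself open and has been resolved by \cite{NRYZ} only for centrally symmetric bodies of class $C^1$. Our hypothesis is a codimension-one weakening of that of \thref{con:07}, pertaining only to supporting hyperplanes of the fixed body $K_1$. The extra leverage I would seek comes from the Olovyanishnikov lemma in \cite{Olov} ensuring that $K_1$ remains a body of flotation for every convex combination $(1-t)K_2+tK_3$, $t\in[0,1]$; hence along any fixed supporting hyperplane $H$ of $K_1$, the entire one-parameter family of sections $((1-t)K_2+tK_3)\cap H$ shares the same centroid. Differentiating this identity in $t$ at the endpoints should extract a rigid pointwise relation between $\partial K_2$ and $\partial K_3$, strong enough to force concentric homothety. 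Executing this variational computation cleanly, and dealing with regularity at non-smooth points of $\partial K_1$, is where I expect to encounter the main technical difficulty.
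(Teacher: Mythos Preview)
The paper does not prove this statement: it is explicitly presented as an open conjecture, with the remark ``To the best of our knowledge, this is an open problem.'' So there is no paper proof to compare against; your proposal is an attempted attack on a problem the authors leave open.

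Regarding the proposal itself, there are genuine gaps. In the first stage you correctly identify that Dupin's theorem reduces the hypothesis to a restricted form of \thref{con:07} (coinciding centroids along the supporting hyperplanes of $K_1$), but you then acknowledge that \thref{con:07} is open even in its full-strength version. Your proposed workaround --- differentiating along the family $(1-t)K_2+tK_3$ using the claim that $K_1$ remains a body of flotation for each such combination --- is not justified: the Olovyanishnikov lemma cited in the paper asserts something different (namely, that a common flotation body passes from $K_1$ to $K_2$ under the coinciding-centroid hypothesis), not stability under Minkowski convex combinations. Without that stability, the variational argument has no foundation.

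The second stage is also incomplete. Even granting that $K_3=c\cdot K_2$ are concentric homothets, your step ``from which one expects to derive central symmetry of each section $K_3\cap H'$'' is asserted, not argued. What Dupin gives you is that the centroid of $K_2\cap(H'/c)$ equals $p'/c$ whenever $H'$ supports $K_2$ at $p'$; this says that $(1/c)K_2$ behaves like a flotation-type surface for $K_2$, but it does not by itself force central symmetry of the sections, which is what \thref{thm:12} would need. So as written, neither stage closes, and the conjecture remains open.
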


\noindent To the best of our knowledge, this is an open problem.\\

In the statement of \thref{lem:05}, the assumption that the family $\mathcal F$ contains an affine hyperplane in every direction can not be easily relaxed. Indeed, consider, e.g., $\boldsymbol A=\Diag(1,1,\ldots,1,\lambda)$, $|\lambda|<1$, and $\boldsymbol b=(0,0,\ldots,0,\delta)$, $\delta\neq 0$. Then for every $\boldsymbol\xi\in\mathbb S^{n-1}$ such that $\xi_n\neq 0$ we have
$$s(\boldsymbol\xi)\colonequals\frac{\delta}{(1-\lambda^2)\xi_n}\equalscolon t(\boldsymbol\xi)$$
satisfying \eqref{eq:18}. Moreover, the corresponding affine hyperplanes $H_{\boldsymbol\xi}^t$ intersect both $K_1$ and $K_2$ for every $\boldsymbol\xi$ such that $|\xi_n|>|\delta|(1-\lambda^2)^{-1}$, which may be an arbitrarily large open subset of $\mathbb S^{n-1}$.

\subsection{sec:17}

In both arguments, the use of the reference section $K_{\tau_*}$ was crucial. Unless all the hyperplanes in the range of $\sigma$ contain the diametral chord of $K$ passing through $p$, we can always find a~section $K_\tau$ that is disjoint from a~non-empty open subset of $\sigma(\Gr_{n-2}(T))+p$. Therefore, we are almost never able to apply any known result to each convex body $K_\tau$, $\tau\in\mathbb R$, individually.\\

Observe, however, that if $p\in\partial K$ is a~point on the boundary of $K$ and $\partial K$ is locally strongly convex at $p$, then the intersections $K\cap T$ of $K$ with affine hyperplanes $T$ converge in Banach-Mazur distance to $\mathbb B^{n-1}$ as $T$ converges to the hyperplane tangent to $K$ at $p$. Therefore, in order to find a~single reference section $K_{\tau_*}$ admitting a~certain group of symmetries, it is enough to look near the unit Euclidean ball. This opens up the possibility of using techniques of harmonic analysis proposed by M.~Angeles Alfonseca, F.~Nazarov, D.~Ryabogin, and V.~Yaskin in \cite{ANRY}, and at the same time provides an example of a~natural setting in which the usually problematic locality assumption is not restrictive.

\section*{Acknowledgments}

We would like to thank F.~Nazarov, D.~Ryabogin, and V.~Yaskin for many inspiring discussions and invaluable comments that helped us improve the paper, E.~Morales-Amaya for bringing to our attention the dual concept of alignment, and M.~Wojciechowski for suggesting to us the way to prove \thref{lem:06}.

\bibliography{references}{}

\providecommand{\bysame}{\leavevmode\hbox to3em{\hrulefill}\thinspace}
\providecommand{\MR}{\relax\ifhmode\unskip\space\fi MR }
\providecommand{\MRhref}[2]{%
  \href{http://www.ams.org/mathscinet-getitem?mr=#1}{#2}
}
\providecommand{\href}[2]{#2}
\begin{thebibliography}{10}

\bibitem{Aitchison_Petty_Rogers_1971}
P.W. Aitchison, C.M. Petty, and C.A. Rogers, \emph{A convex body with a false
  centre is an ellipsoid}, Mathematika \textbf{18} (1971), no.~1, 50--59.

\bibitem{Alex}
A.D. Alexandrov, \emph{Sushchestvovaniye pochti vezde vtorogo differentsiala
  vypukloy funktsii i nekotoryye svyazannyye s nim svoystva vypuklykh
  poverkhnostey \tr{Almost everywhere existence of the second differential of a
  convex function and some related properties of convex surfaces}}, Uchenyye
  Zapiski Leningradskogo Gosudarstvennogo Universiteta \tr{Proceedings of the
  Leningrad State University} \textbf{6} (1939), 3--35 (Russian).

\bibitem{Alfonseca}
M.~Angeles Alfonseca, M.~Cordier, J.~Jer\'onimo-Castro, and E.~Morales-Amaya,
  \emph{Characterization of the sphere and of bodies of revolution by means of
  {L}arman points}, Advances in Geometry \textbf{24} (2024), no.~2, 247--262.

\bibitem{ACR1}
M.~Angeles Alfonseca, M.~Cordier, and D.~Ryabogin, \emph{On bodies with
  directly congruent projections and sections}, Israel Journal of Mathematics
  \textbf{215} (2016), no.~2, 765--799.

\bibitem{ACR2}
\bysame, \emph{On bodies in $\mathbb{R}^5$ with directly congruent projections
  or sections}, Revista Matem\'atica Iberoamericana \textbf{35} (2019), no.~6,
  1745--1762.

\bibitem{ANRY}
M.~Angeles Alfonseca, F.~Nazarov, D.~Ryabogin, and V.~Yaskin, \emph{Analysis
  and geometry near the unit ball: proofs, counterexamples, and open
  questions}, Harmonic Analysis and Convexity (A.~Koldobsky and A.~Volberg,
  eds.), De Gruyter, Berlin, Boston, 2023, pp.~445--468.

\bibitem{BARKER200179}
J.A. Barker and D.G. Larman, \emph{Determination of convex bodies by certain
  sets of sectional volumes}, Discrete Mathematics \textbf{241} (2001), no.~1,
  79--96, Selected Papers in honor of Helge Tverberg.

\bibitem{Bianchi1987}
G.~Bianchi and P.M. Gruber, \emph{Characterizations of ellipsoids}, Archiv der
  Mathematik \textbf{49} (1987), no.~4, 344--350.

\bibitem{blaschke1923vorlesungen}
W.~Blaschke, \emph{{V}orlesungen \"uber {D}ifferentialgeometrie und
  geometrische {G}rundlagen von {E}insteins {R}elativit\"atstheorie {II}.
  {A}ffine {D}ifferentialgeometrie}, Die Grundlehren der Mathematischen
  Wissenschaften, no.~7, Springer-Verlag, Berlin Heidelberg, 1923 (German).

\bibitem{Blaschke1918}
W.~Blaschke and G.~Hessenberg, \emph{Lehrs\"atze \"uber konvexe {K}\"orper},
  Jahresbericht der Deutschen Mathematiker-Vereinigung \textbf{26} (1918),
  215--220 (German).

\bibitem{10.2140/gt.2021.25.2621}
G.~Bor, L.~Hern\'andez-Lamoneda, V.~Jim\'enez~de Santiago, and L.~Montejano,
  \emph{On the isometric conjecture of {B}anach}, Geometry \& Topology
  \textbf{25} (2021), no.~5, 2621--2642.

\bibitem{brunn1889curven}
H.~Brunn, \emph{{\"U}ber {C}urven ohne {W}endepunkte}, Ackermann, 1889
  (German).

\bibitem{Burton1976}
G.R. Burton, \emph{On the sum of a zonotope and an ellipsoid}, Commentarii
  Mathematici Helvetici \textbf{51} (1976), no.~1, 369--387.

\bibitem{Burton1977}
\bysame, \emph{Some characterisations of the ellipsoid}, Israel Journal of
  Mathematics \textbf{28} (1977), no.~4, 339--349.

\bibitem{busemann1955geometry}
H.~Busemann, \emph{The geometry of geodesics}, Pure and Applied Mathematics,
  no.~6, Academic Press, 1955.

\bibitem{do2016differential}
M.P.~do Carmo, \emph{Differential geometry of curves and surfaces}, Dover Books
  on Mathematics, Dover Publications, 2016.

\bibitem{gardner1995geometric}
R.J. Gardner, \emph{Geometric tomography}, Encyclopedia of Mathematics and its
  Applications, no.~58, Cambridge University Press, 1995.

\bibitem{gl4}
R.~Ghanam and G.~Thompson, \emph{Nonsolvable subalgebras of
  $\mathfrak{gl}(4,\mathbb{R})$}, Journal of Mathematics \textbf{2016} (2016),
  1--17.

\bibitem{Golubyatnikov}
V.P. Golubyatnikov, \emph{Uniqueness questions in reconstruction of
  multidimensional objects from tomography-type projection data}, Inverse and
  Ill-Posed Problems Series, no.~21, De Gruyter, Berlin, Boston, 2000.

\bibitem{faor}
J.~Jer\'onimo-Castro, L.~Montejano, and E.~Morales-Amaya, \emph{Only solid
  spheres admit a false axis of revolution}, Journal of Convex Analysis
  \textbf{18} (2011), 505--511.

\bibitem{john1948extremum}
F.~John, \emph{Extremum problems with inequalities as subsidiary conditions},
  Studies and Essays: Courant Anniversary Volume, John Wiley \& Sons:
  Interscience Division, New York, 1948, pp.~187--204.

\bibitem{kirillov2017introduction}
A.~Kirillov, \emph{An introduction to {L}ie groups and {L}ie algebras},
  Cambridge Studies in Advanced Mathematics, Cambridge University Press, 2017.

\bibitem{Kubota}
T.~Kubota, \emph{Einfache {B}eweise eines {S}atzes \"uber die konvexe
  geschlossene {F}l\"ache}, The Science Reports of the T\=ohoku Imperial
  University \textbf{3} (1914), 235--255 (German).

\bibitem{Larman}
D.G. Larman, \emph{A note on the false centre problem}, Mathematika \textbf{21}
  (1974), no.~2, 216--227.

\bibitem{Lu2005}
Y.~Lu and C.~Scharlach, \emph{Affine hypersurfaces admitting a pointwise
  symmetry}, Results in Mathematics \textbf{48} (2005), no.~3, 275--300.

\bibitem{MR2168169}
L.~Montejano, \emph{Two applications of topology to convex geometry},
  Proceedings of the Steklov Institute of Mathematics \textbf{247} (2004),
  182--185.

\bibitem{S002557930000019X}
L.~Montejano and E.~Morales-Amaya, \emph{Shaken false centre theorem ({I})},
  Mathematika \textbf{54} (2007), no.~1--2, 41--46.

\bibitem{Montejano_Morales-Amaya_2007}
\bysame, \emph{Variations of classic characterizations of ellipsoids and a
  short proof of the false centre theorem}, Mathematika \textbf{54} (2007),
  no.~1--2, 35--40.

\bibitem{montejano2021new}
\bysame, \emph{A new and simple proof of the false centre theorem},
  \url{https://doi.org/10.48550/arXiv.2110.04324}, Oct 2021.

\bibitem{10.2307/1968975}
D.~Montgomery and H.~Samelson, \emph{Transformation groups of spheres}, Annals
  of Mathematics \textbf{44} (1943), no.~3, 454--470.

\bibitem{moralesamaya2023}
E.~Morales-Amaya, \emph{On {B}arker-{L}arman conjecture relative to a convex
  body with centrally symmetric sections},
  \url{https://doi.org/10.48550/arXiv.2307.07624}, 2023.

\bibitem{moralesamaya2025}
\bysame, \emph{Convex bodies with sections with hyperplanes of symmetry},
  \url{https://doi.org/10.48550/arXiv.2509.17326}, 2025.

\bibitem{10.1093/imrn/rnx211}
S.~Myroshnychenko, D.~Ryabogin, and C.~Saroglou, \emph{Star bodies with
  completely symmetric sections}, International Mathematics Research Notices
  \textbf{2019} (2017), no.~10, 3015--3031.

\bibitem{NRYZ}
F.~Nazarov, D.~Ryabogin, V.~Yaskin, and B.~Zawalski, manuscript in preparation.

\bibitem{nomizu1994affine}
K.~Nomizu, N.~Katsumi, and T.~Sasaki, \emph{Affine differential geometry:
  Geometry of affine immersions}, Cambridge Tracts in Mathematics, no. 111,
  Cambridge University Press, 1994.

\bibitem{Odor1999}
T.~\'Odor and P.M. Gruber, \emph{Ellipsoids are the most symmetric convex
  bodies}, Archiv der Mathematik \textbf{73} (1999), no.~5, 394--400.

\bibitem{Olov}
S.P. Olovyanishnikov, \emph{Ob odnoy kharakteristike ellipsoida \tr{On a
  certain characterization of an ellipsoid}}, Uchenyye Zapiski Leningradskogo
  Gosudarstvennogo Universiteta \tr{Proceedings of the Leningrad State
  University} \textbf{83} (1941), 114--128 (Russian).

\bibitem{Rogers1965}
C.A. Rogers, \emph{Sections and projections of convex bodies}, Portugaliae
  Mathematica \textbf{24} (1965), no.~2, 99--103.

\bibitem{Schoels2013}
K.~Schoels, \emph{Affine hypersurfaces admitting a pointwise $\mathrm{SO}(n-1)$
  symmetry}, Results in Mathematics \textbf{63} (2013), no.~1, 629--648.

\bibitem{Soltan2019}
V.~Soltan, \emph{Characteristic properties of ellipsoids and convex quadrics},
  Aequationes Mathematicae \textbf{93} (2019), no.~2, 371--413.

\bibitem{Sss1932ZusammensetzungVE}
W.~S\"uss, \emph{Zusammensetzung von {E}ik\"orpern und homothetische
  {E}ifl\"achen}, T\=ohoku Mathematical Journal \textbf{35} (1932), 47--50.

\bibitem{terrytao}
T.~Tao, \emph{Matrix identities as derivatives of determinant identities},
  \url{https://terrytao.wordpress.com/2013/01/13/matrix-identities-as-derivatives-of-determinant-identities/},
  Jan 2013.

\bibitem{Wong}
Y.~Wong, \emph{Differential geometry of {G}rassmann manifolds}, Proceedings of
  the National Academy of Sciences of the United States of America \textbf{57}
  (1967), no.~3, 589--594.

\bibitem{Zawalski2024}
B.~Zawalski, \emph{On star-convex bodies with rotationally invariant sections},
  Beitr\"age zur Algebra und Geometrie \textbf{65} (2024), 495--509.

\end{thebibliography}
\bibliographystyle{amsplain}

\end{document}